\newtheorem{theo}{Theorem}
\newtheorem{prop}[theo]{Proposition}
\newtheorem{lemma}[theo]{Lemma}
\newtheorem{rem}[theo]{Remark}
\newenvironment{proof}{\noindent{\bf Proof:}}{$\Box$\medskip}
\newcommand{\A}{\mathcal{A}}
\newcommand{\Adet}{\mathcal{A_{\rm det}}}
\newcommand{\Asto}{\mathcal{A_{\rm sto}}}
\newcommand{\eA}{\bar{\mathcal{A}}}
\newcommand{\eAdet}{\bar{\mathcal{A}}_{\rm det}}
\newcommand{\eAsto}{\bar{\mathcal{A}}_{\rm sto}}
\newcommand{\B}{\mathcal{B}}
\newcommand{\E}{\mathbb{E}}
\newcommand{\Gq}{\mathcal{G}_{\rm qsh}}
\newcommand{\Gs}{\mathcal{G}_{\rm sh}}
\newcommand{\Hq}{\mathcal{H}_{\rm qsh}}
\newcommand{\Hs}{\mathcal{H}_{\rm sh}}
\newcommand{\N}{\mathbb{N}}
\newcommand{\Nh}{{\mathbb{N}/2}}
\newcommand{\R}{\mathbb{R}}
\newcommand{\sh}{\shuffle}
\newcommand{\W}{\mathcal{W}}
\title{Word combinatorics for stochastic differential equations: splitting integrators
}
\author{
A. Alamo\footnote{Departamento de Matem\'atica Aplicada e IMUVA, Facultad de Ciencias, Universidad de Valladolid,  Spain. Email: alamozapaterouva@gmail.com}
\  and J.M. Sanz-Serna\footnote{Departamento de Matem\'aticas, Universidad Carlos III de Madrid, Avenida de la Universidad 30, E-28911 Legan\'es (Madrid), Spain. Email: jmsanzserna@gmail.com}
}
\date{\today}
\begin{document}
\maketitle

\begin{abstract}
We present an analysis based on word combinatorics of splitting integrators for Ito or Stratonovich systems of stochastic differential equations. In particular we present a technique to write down systematically the expansion of the local error; this makes it possible to easily formulate the conditions that guarantee that a given integrator achieves a prescribed strong or weak order.
This approach bypasses the need to use the Baker-Campbell-Hausdorff (BCH) formula and shows the existence of an order barrier of two for the attainable weak order. The paper also provides a succinct introduction to the combinatorics of words.
\end{abstract}
\bigskip

\noindent\textbf{Mathematical Subject Classification (2010)} 65C30, 60H05, 16T05

\noindent\textbf{Keywords} Stochastic differential equations, splitting integrators, word combinatorics

\section{Introduction}

This paper shows how word combinatorics is a useful tool in the analysis of splitting integrators for Ito or Stratonovich systems of stochastic differential equations. In particular we present a technique  to write down systematically the expansion of the local error; this makes it possible to easily formulate the conditions that guarantee that a given integrator achieves a prescribed strong or weak order. This approach bypasses the need to use the Baker-Campbell-Hausdorff (BCH) formula and shows the existence of an order barrier of two for the attainable weak order. In the case of Stratonovich systems the technique has already appeared in \cite{alfonso}; the corresponding Ito results appear here for the first time. In addition, while the succinct presentation in \cite{alfonso} focuses on the \lq\lq recipe\rq\rq\ to write down the order conditions, the present paper includes  background on the combinatorics of words. In this way  we also provide what we hope is a reader-friendly introduction to that area, which has applications outside numerical mathematics in many mathematical tasks, including averaging of periodically or quasiperiodically forced systems of differential equations, reduction of continuous or discrete dynamical systems to normal form, rough path theory, etc. (references are given below).

The importance of splitting integrators \cite{survey,survey2} has increased continuously in the recent past
due to their flexibility to adapt to the structure of the problem being solved, be it in the context of
multiphysics systems or in the domain of geometric integration  (i.e.\ integration performed under the
requirement that the numerical solution has some of the geometric properties possessed by the true solution)
\cite{gi,hlw,blacalibro}. As it is the case with any other one-step integrator, the analysis of a splitting
algorithm starts with the study of the local error \cite{Bu2016,HaNoWa1993}, i.e.\ the error under the
assumption that the computation at  time level \(t_{n+1}\) starts from information at time \(t_n\) that is
free of errors. Unfortunately, even in the case where the system being integrated consists of (deterministic)
ordinary differential equations, the investigation of the local error may be a daunting task if undertaken in
a naive way. Formal series and combinatorial algebra have been very useful tools as we discuss presently;  see
\cite{china} for a recent survey.

For  Runge-Kutta methods, whose history goes back to 1895, the structure of the local error was only
understood after Butcher's work in the 1960's \cite{butcher63}; this work made it possible to construct
formulas that improve enormously on those known until then. In Butcher's theory, the true and numerical
solutions are expanded in series; each term of the series is the product of a power of the step size, a
numerical coefficient (elementary weight) and a vector-valued function (elementary differential). There is
term in the series associated with each rooted tree. The elementary differentials change with the system being
integrated but are common to all Runge-Kutta formulas and to the true solution. The weights change with the
integrator but are independent of the system being integrated. B-series \cite{HW}, formal series indexed by
rooted trees, were introduced by Hairer and Wanner as a means to systematize Butcher's approach and to extend
it to more general classes of algorithms. B-series are indexed by rooted trees and are combinations of
elementary differentials. A key result in the theory of B-series is the rule to compose two B-series to obtain
a third. B-series possess many applications in numerical analysis, especially in relation to geometric
integration (starting with \cite{canonical}) and modified equations \cite{aust}. (Loosely speaking the
modified equation of a numerical integration is the differential equation exactly satisfied by the numerical
solution.) Recently B-series have also been used outside numerical mathematics, e.g.\ to perform high-order
averaging of periodic or quasiperiodic systems \cite{part1,part2}.

For splitting integrations of deterministic systems, the best-known method  to investigate the local error
\cite{gi} uses the  BCH formula \cite{ssc,hlw}. This may be considered in indirect
approach, in that it does not compare the numerical and true solutions but rather the modified system of the
integrator and the true system being solved. The large combinatorial complexity of the BCH formula is
certainly a limitation of this technique. An alternative methodology, patterned after Butcher's treatment of
the Runge-Kutta case was introduced in \cite{phil} (a summary may be seen in \cite[section III.3]{hlw}). A
third possibility is the use of \emph{word series} expansions
\cite{anderfocm,orlando,part3,juanluis,guirao,words,k,k2}. Word series are patterned after B-series; rather
than combining elementary differentials they combine \emph{word basis functions}. They are indexed by words on
an alphabet rather than by rooted trees. Their scope is narrower than that of B-series; all problems that may
be treated by word series are amenable to analysis via B-series, but the converse is not true. On the other
hand, word series, when applicable, are more compact and simpler to use than B-series; in particular the
composition rule for word series is much simpler than the corresponding rule for B-series. Word series may be
used outside numerical mathematics in tasks such as high-order averaging \cite{orlando,part3,guirao,k,k2},
reduction of dynamical systems to normal form \cite{juanluis}, etc. They are very well suited to investigate
the local error of splitting algorithms \cite{words} (see also the closely related technique in \cite[Section
2.4]{maka}).

Turning now our attention to splitting algorithms for  stochastic differential equations, the most popular technique is again based in the BCH formula, see e.g.~\cite{Leim,Leim2}. In \cite{alfonso} we
suggested a word-series approach in the case where the equations are interpreted in the sense of Stratonovich. This approach bypasses the use of the BCH formula and it is not difficult to implement in practice. Here we extend the material in \cite{alfonso} in several directions that we now discuss briefly.

This paper contains nine sections.
Section 2 recalls the Taylor expansion of the solution of Stratonovich and Ito equations and introduces much
of the notation to be used throughout the paper. In Section 3 we present splitting integrators and their local
errors. We also discuss briefly the pullback operator associated with a mapping; this is a key notion in what
follows, as the local error is investigated here by expanding pullback operators rather than mappings. Section
4 describes the main tool: formal series indexed by words. We employ two kinds of such series: series of
differential operators and series of mappings. The central results, i.e.\ the structure of the strong and weak
local error and the strong and weak order conditions, are given in Section 5. In the Stratonovich case the
order conditions have been already presented in \cite{alfonso}; the Ito case is new, as is the detailed
discussion of the necessity of the order conditions (Lemma~\ref{lem:independence}). Section 6 deals with the
shuffle and quasishuffle products; these play a key role in the combinatorics of words. In our context they
are necessary to identify sets of \emph{independent} order conditions, a point not discussed in \cite{alfonso}, and to prove the composition rule for word series
(Proposition~\ref{prop:composition}). The discussion of the order conditions finishes in
Section 7 with the help of the infinitesimal generator. There we show an order barrier of 2 for the weak order
attainable by splitting integrators in both the Stratonovich and Ito cases.  Sections 8 and 9 present come complements;
they respectively discuss how the relation between the Ito and Stratonovich interpretations may be understood
in terms of word combinatorics and the links between the material in this paper and the theory of Hopf
algebras.

We close the introduction with some important points.
 \begin{itemize}
 \item The word \lq\lq formal\rq\rq\ is often used in some disciplines, such as theoretical physics, as
     somehow synonymous to imprecise or lacking in rigour. In this paper formal series are well defined
     objects that, after truncation, yield meaningful approximations; they are manipulated rigorously
     because all the necessary computations involve \emph{finite} sums.
 \item Our interest is in the  \emph{combinatorial} aspects of the theory. Therefore we shall not concern
     ourselves with the derivation of error bounds or other \emph{analytic} considerations. The interested
     reader is referred to the appendix of \cite{alfonso} (see also \cite{orlando}).
 \item In order not to clatter the exposition, all functions that appear are assumed to be smooth  in the
     whole of the Euclidean space. At some places only a finite number of the terms in some  series make sense if the given vector fields have limited smoothness. In those circumstances one has to replace the series by a finite sum.
\end{itemize}

\section{Stochastic Taylor expansions}
We are concerned with Stratonovich,
\begin{equation}\label{eq:s}
dx = f(x)\,dt+ \sum_{i=1}^n g_i(x) \circ d\B_i,
\end{equation}
or Ito,
\begin{equation}\label{eq:i}
dx = f(x)\,dt+ \sum_{i=1}^n g_i(x)\,  d\B_i,
\end{equation}
systems of differential equations (see e.g.\ \cite{milstein}), where \( f\), \(g_i\), \(i=1,\dots,n\), are
smooth vector fields in \(\R^d\) and \(\B_i\), \(i=1,\dots,n\), are independent scalar Wiener processes. When
applying splitting integrators, \(f\) is often written
 a sum \(\sum_{j=1}^m f_j\); it is then convenient to work hereafter with the formats
\begin{equation}\label{eq:stratonovich}
dx =\sum_{a\in\Adet} f_a(x)\, dt+\sum_{A\in\Asto} f_A(x)\circ d\B_A
\end{equation}
or
\begin{equation}\label{eq:ito}
dx =\sum_{a\in\Adet} f_a(x)\, dt+\sum_{A\in\Asto} f_A(x)\, d\B_A.
\end{equation}
The finite set of indices \(\Adet\) is called the \emph{deterministic alphabet}; its elements are called
\emph{deterministic letters}. The finite set \(\Asto\) is  the \emph{stochastic alphabet} and its elements are
the \emph{stochastic letters}. The set \(\A= \Adet \cup\Asto\) is called the \emph{alphabet} and is assumed to
be nonempty. On the other hand, we include the cases where \(\Adet\) or \(\Asto\) are empty; if
\(\Asto=\emptyset\) then \eqref{eq:stratonovich}--\eqref{eq:ito} is a system of ordinary differential
equations. We use lower case \(a\), \(b\), \dots\ for deterministic letters and upper case \(A\), \(B\),
\dots\ for stochastic letters. The symbols \(k\), \(\ell\), \(m\), \dots\ are used to refer to elements of
\(\A\), i.e.\ to letters, when it is not necessary to specify if they are deterministic or stochastic.

In this section we recall the  expressions of the Taylor expansions of the solutions of
\eqref{eq:stratonovich} or \eqref{eq:ito}  presented in e.g.\ \cite[Chapter 5]{kloeden}. Our treatment is
somewhat different, because we deal with the format \eqref{eq:stratonovich}--\eqref{eq:ito} rather than with
the standard \eqref{eq:s}--\eqref{eq:i}. Specifically, as distinct from \cite{kloeden}, we work here with
deterministic alphabets \(\Adet\) that may have several letters and, in the Ito case, introduce introduce a
letter \(\bar A\) for each \(A\in\Asto\). In the presentation of the Taylor expansion we shall encounter
words, and their differential operators and iterated integrals; these are essential later in the paper.

\subsection{The Stratonovich-Taylor expansion}
With each  letter \(\ell\in\A\)   we associate a first-order differential operator \(D_\ell\). By definition,
\(D_\ell\) is the Lie operator that maps each smooth function  \(\chi:\R^d\rightarrow \R\) into the
function \(D_\ell\chi\) that at the point \(x\in\R^d\) takes the value
\begin{equation}\label{eq:differentialoperator}
D_\ell \chi(x) = \sum_{i=1}^d f_\ell^i(x)\frac{\partial}{\partial x^i}\chi(x) = \chi^\prime(x) f_\ell(x)
\end{equation}
(superscritps denote components of vectors). In \eqref{eq:differentialoperator}, the symbol \(\chi^\prime\) denotes the first (Fr\'{e}chet) derivative of \(\chi\); its value at \(x\in\R^d\) is a linear map defined on \(\R^d\) and
  \(\chi^\prime(x) f_\ell(x)\) is the image by this linear map of the vector \(f_\ell(x)\in\R^d\). Smooth functions \(\chi:\R^d\rightarrow \R\) will often  be referred
to as \emph{observables}. Since the Stratonovich calculus follows the rules of ordinary calculus, if \(x(t)\)
is a solution of \eqref{eq:stratonovich} and \(t_0\geq 0\), \(h\geq 0\),
\begin{eqnarray}\nonumber
\chi(x(t_0+h)) &= &\chi(x(t_0))+\int_{s_1=t_0}^{t_0+h} \chi^\prime(x(s_1))\, dx(s_1) \\
\nonumber
&=& \chi(x(t_0)) + \int_{s_1=t_0}^{t_0+h} \sum_{a\in\Adet} D_a\chi(x(s_1))\, ds_1\\
\nonumber
&& \qquad\qquad\qquad\qquad+\int_{s_1=t_0}^{t_0+h} \sum_{A\in\Asto} D_A\chi(x(s_1))\circ d\B_A(s_1)\\
\label{eq:intchi}
&=& \chi(x(t_0)) + \int_{s_1=t_0}^{t_0+h} \sum_{\ell_1\in\A} D_{\ell_1}\chi(x(s_1))\circ d\B_{\ell_1}(s_1),
\end{eqnarray}
where for deterministic \(\ell_1\) the notation \(\circ d\B_{\ell_1}(s_1)\) means \(ds_1\). In
\eqref{eq:intchi}, as \(h\downarrow 0\), the term \(\chi(x(t_0))\) provides the Taylor approximation of
order 0 to \(\chi(x(t_0+h))\) and the integral gives the corresponding remainder.
 To obtain additional terms of the Taylor expansion of  \(\chi(x(t_0+h))\), we first write formula \eqref{eq:intchi}
with
 \(D_{\ell_1}\chi(x(s_1))\)  in lieu of \(\chi(x(t_0+h))\),
\[
D_{\ell_1}\chi(x(s_1)) = D_{\ell_1}\chi(x(t_0))+\int_{s_2=t_0}^{s_1}
\sum_{\ell_2\in\A} D_{\ell_2}D_{\ell_1}\chi(x(s_2))\circ d\B_{\ell_2}(s_2),
\]
and then substitute in \eqref{eq:intchi} to get
\begin{eqnarray*}
\chi(x(t)) &= &\chi(x(t_0))+ \sum_{\ell_1\in\A} \left(\int_{s_1=t_0}^{t_0+h} \circ d\B_{\ell_1}(s_1)\right)D_{\ell_1}\chi(x(t_0))\\
&&\qquad + \sum_{\ell_1,\ell_2\in\A} \int_{s_1=t_0}^{t_0+h}\circ d\B_{\ell_1}(s_1)
\int_{s_2=t_0}^{s_1}D_{\ell_2}D_{\ell_1}\chi(x(s_2))\circ d\B_{\ell_2}(s_2).
\end{eqnarray*}
By iterating this procedure, we find the series
\begin{equation}\label{eq:taylor}
\chi(x(t_0)) +\sum_{n=1}^\infty\: \sum_{\ell_1,\dots ,\ell_n\in\A}J_{\ell_n\dots \ell_1}(t_0+h;t_0)
D_{\ell_n}\cdots D_{\ell_1}\chi(x(t_0)),
\end{equation}
where \(J_{\ell_n\dots \ell_1}(t_0+h;t_0)\) denotes the \emph{iterated stochastic integral}
\begin{equation}\label{eq:iteratedintegral}
J_{\ell_n\dots \ell_1}(t_0+h;t_0) = \int_{s_1=t_0}^{t_0+h}\circ d\B_{\ell_1}(s_1) \cdots \int_{s_n=t_0}^{s_{n-1}}\circ d\B_{\ell_n}(s_n).
\end{equation}
Iterated integrals obey the following recursion, \(n\geq 2\),
\begin{equation}\label{eq:recurrenceintegrals}
J_{\ell_n\dots \ell_1}(t_0+h;t_0) = \int_{t_0}^{t_0+h}J_{\ell_n\dots \ell_2}(s;t_0)\circ d\B_{\ell_1}(s).
\end{equation}

\begin{rem}\label{rem:integralsbrownian}In the right-hand side of \eqref{eq:taylor} the iterated
integrals are constructed from the Brownian processes \(\B_A\), \(A\in\Asto\), in \eqref{eq:stratonovich} and do not change if the fields \(f_\ell\), \(\ell\in\A\), (or even their dimension \(d\)) change. On the other hand the operators \(D_\ell\) are constructed from the vector fields and do not change with the Brownian processes.
\end{rem}

In the deterministic case, iterated integrals were introduced and investigated extensively by  Kuo Tsai Chen
\cite{chen} in the context of his work on topology.

The notation may be simplified  by introducing the set \(\W\) consisting of all \emph{words} \(\ell_n\ell_{n-1}\dots\ell_1\) constructed
 with the letters of the alphabet \(\A\); \(\W\) includes an empty word \(\emptyset\) with \(n=0\) letters.
 Elements
  \(\ell\in\A\) are seen as  words with a single letter and accordingly \(\A\) becomes a subset of \(\W\).
With each word \(w=\ell_n\dots\ell_1\) with \(n\geq 1\) letters, we associate the \(n\)-th order (linear)
 \emph{differential operator} \(D_w = D_{\ell_n}\cdots D_{\ell_1}\). For the empty word, we define \(D_\emptyset\)
 to be the identity operator \(Id\)
 with \(Id \chi = \chi\) for each  observable and set \(J_\emptyset =1\). (Then \eqref{eq:recurrenceintegrals}
 also holds for \(n=1\)). With this notation the series in \eqref{eq:taylor} simply reads
\begin{equation}\label{eq:taylor2}
\sum_{w\in\W}J_w(t_0+h;t_0)
D_w\chi(x(t_0)).
\end{equation}

We note that for a deterministic letter,
\[
J_a(t_0+h;t_0) = \int_{t_0}^{t_0+h} ds_1 = h,
\]
while in the stochastic case
\[
J_A(t_0+h;t_0) = \int_{t_0}^{t_0+h} \circ d\B_A(s_1) = \B_A(t_0+h)-\B_A(t_0)
\]
is a  Gaussian random variable with standard deviation \(h^{1/2}\). For this reason, we attach to each
deterministic letter \(a\in\Adet\) the \emph{weight} \(\|a\| = 1\) and  each stochastic letter \(A\in\Asto\)
the weight \(\|A\| = 1/2\). We then define the weight \(\|w\|\) of each word
 by adding the weights of its letters. The weight of the empty word is \(0\). The following proposition, whose proof may be seen in \cite{alfonso}, lists some properties of the iterated integrals. It shows in particular that, as \(h\downarrow 0\), \(J_w(t_0+h;t_0)\) may be conceived as
  having size \(\mathcal{O}(h^{\|w\|})\).
\begin{prop}\label{prop:iteratedintegrals}
The iterated Stratonovich integrals \(J_w(t_0+h;t_0)\) have the following properties:
\begin{itemize}
\item The joint distribution of any finite subfamily of the family of random variables \( \{ h^{-\|w\|} J_w(t_0+h;t_0)\}_{w\in\W}\) is independent
of \(t_0\geq 0\) and \( h>0\).
\item \( \E\mid J_w(t_0+h;t_0)\mid^p<\infty\), for each \( w\in\W\), \(t_0\geq 0\), \( h\geq 0 \) and \(p\in
    [0,\infty)\).
\item For each \( w\in\W\) and any finite \( p \geq 1\), the  (\(t_0\)-independent) \(L^p\) norm of the random variable \( J_w(t_0+h;t_0)\) is \(\mathcal{O}(h^{\|w\|})\), as \( h\downarrow 0\).
\item
\( \E\ J_w(t_0+h;t_0) = 0\) whenever \( \|w\|\) is not an integer.
\end{itemize}
\end{prop}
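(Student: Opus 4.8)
The backbone of the argument is the self‑similarity of Brownian motion, which yields the first three items and reduces the moment statements to a single bound for a normalized iterated integral; the fourth item will come from a reflection symmetry. Concretely, for the first item I fix $t_0\ge 0$ and $h>0$ and set $W_A(u)=h^{-1/2}\bigl(\B_A(t_0+hu)-\B_A(t_0)\bigr)$ for each $A\in\Asto$. By the stationarity of the increments and the diffusive scaling of Brownian motion, $(W_A)_{A\in\Asto}$ is again a family of independent standard Wiener processes whose joint law does not depend on $(t_0,h)$. Changing variables $s_j=t_0+hu_j$ in \eqref{eq:iteratedintegral} and using that the Stratonovich differential is linear under this affine reparametrization of time, $\circ d\B_{\ell_j}(t_0+hu_j)=h^{\|\ell_j\|}\circ dW_{\ell_j}(u_j)$ for every letter (the deterministic ones contributing $ds_j=h\,du_j$ and the stochastic ones $h^{1/2}\circ dW_A$). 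Collecting the powers of $h$ yields $J_w(t_0+h;t_0)=h^{\|w\|}\widehat J_w$, where $\widehat J_w$ is the iterated integral obtained from \eqref{eq:iteratedintegral} with $t_0=0$, $h=1$ and the $\B_A$ replaced by the $W_A$; since $\widehat J_w$ is a fixed measurable functional of $(W_A)_{A\in\Asto}$, the joint law of $\{h^{-\|w\|}J_w(t_0+h;t_0)\}_{w\in\W}=\{\widehat J_w\}_{w\in\W}$ is independent of $(t_0,h)$. (To sidestep the change‑of‑variables rule for Stratonovich integrals one may instead establish $J_w(t_0+h;t_0)=h^{\|w\|}\widehat J_w$ by induction on the number of letters of $w$ directly from the recursion \eqref{eq:recurrenceintegrals}.)

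For the second and third items it now suffices to check that $\E|\widehat J_w|^p<\infty$ for every $w\in\W$ and every $p\in[1,\infty)$: then $\E|J_w(t_0+h;t_0)|^p=h^{p\|w\|}\,\E|\widehat J_w|^p<\infty$ and $\|J_w(t_0+h;t_0)\|_{L^p}=\|\widehat J_w\|_{L^p}\,h^{\|w\|}=\mathcal{O}(h^{\|w\|})$, while the range $p\in[0,1)$ follows from $|x|^p\le 1+|x|$. The bound $\E|\widehat J_w|^p<\infty$, or more precisely an estimate $\|J_w(t_0+h;t_0)\|_{L^p}\le C_{w,p}\,h^{\|w\|}$ valid for all $h\ge 0$, I would prove by induction on the length of $w$ using \eqref{eq:recurrenceintegrals}, according to whether the rightmost letter $\ell_1$ of $w=\ell_n\cdots\ell_1$ is deterministic or stochastic. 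If $\ell_1$ is deterministic the iterated integral is an ordinary integral of the shorter iterated integral $J_{\ell_n\cdots\ell_2}$, and Minkowski's integral inequality gives the bound at once. If $\ell_1=A$ is stochastic, I would pass from the Stratonovich to the Ito integral; the $\tfrac12$ correction is, by \eqref{eq:recurrenceintegrals}, either zero or an iterated integral with one letter fewer (the pair $\circ d\B_{\ell_2},\circ d\B_A$ being replaced by a single $ds$ when $\ell_2=A$), hence covered by the induction, whereas the Ito term is handled by the Burkholder--Davis--Gundy inequality followed by Minkowski's integral inequality applied to the $L^{p/2}$ norm of the quadratic variation; the exponents of $h$ then combine to $h^{\|w\|}$.

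For the fourth item, observe that $\|w\|$ is a non‑integer precisely when $w$ contains an odd number $k$ of stochastic letters. Since $(-\B_A)_{A\in\Asto}$ has the same joint law as $(\B_A)_{A\in\Asto}$, and since replacing every $\B_A$ by $-\B_A$ multiplies each of the $k$ stochastic differentials in \eqref{eq:iteratedintegral} by $-1$ and leaves the deterministic ones unchanged — so that $J_w$ becomes $(-1)^kJ_w$, as one reads off \eqref{eq:recurrenceintegrals} by induction — taking expectations gives $\E J_w=(-1)^k\,\E J_w$, whence $\E J_w=0$ when $k$ is odd.

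The only genuinely delicate point is the moment estimate in the second step: one must organize the induction so that the Stratonovich‑to‑Ito corrections are honestly of lower complexity, and track the power of $h$ through each use of Minkowski's and the Burkholder--Davis--Gundy inequalities so that it comes out exactly $h^{\|w\|}$ — which, a posteriori, the first item guarantees.
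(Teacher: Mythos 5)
The paper does not prove this proposition itself; it defers to the appendix of \cite{alfonso}, where the argument is precisely the one you give: the exact pathwise scaling identity $J_w(t_0+h;t_0)=h^{\|w\|}\widehat J_w$ obtained from the rescaled Wiener processes (or, equivalently, by induction on \eqref{eq:recurrenceintegrals}), which settles the first item and reduces the second and third to moment bounds for the unit-interval integrals, plus a parity/sign argument for the last item. Your treatment of the moment bounds (induction on the word length, splitting on the rightmost letter, Ito--Stratonovich conversion with the covariation term reduced to a time-integral of a shorter iterated integral, then Burkholder--Davis--Gundy and Minkowski) is correct and is the standard way to carry this out; the symmetry $\B_A\mapsto-\B_A$ giving $\E J_w=(-1)^k\E J_w$ for an odd number $k$ of stochastic letters is a clean alternative to computing the expectations via the Ito conversion. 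Two cosmetic points only: Minkowski's integral inequality with exponent $p/2$ requires $p\geq 2$, so for $1\leq p<2$ you should simply invoke monotonicity of $L^p$ norms ($\|\cdot\|_p\leq\|\cdot\|_2$); and the Stratonovich correction term is not literally indexed by a word of the original alphabet unless a time letter is adjoined, so the induction should be stated for the bound $\|J_v(t_0+h;t_0)\|_p\leq C_{v,p}h^{\|v\|}$ over words in the alphabet augmented by a deterministic (time) letter, exactly as you anticipate in your closing remark.
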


In view of the proposition we rewrite \eqref{eq:taylor2} as:
\begin{equation}\label{eq:taylornu}
\sum_{\nu\in\Nh}\:
\sum_{w\in\W,\,{\|w\|=\nu}}
J_w(t_0+h;t_0)
D_w\chi(x(t_0)),
\end{equation}
where \(\Nh =  \{ 0, 1/2, 1, 3/2,\dots\}\). (For each \(\nu\), the inner sum only contains a finite number of
terms.) In this way, by discarding the terms with \(\nu>\nu_0\) in \eqref{eq:taylornu}, one obtains the Taylor
approximation of order \(\nu_0\) for \(\chi(x(t))\). Of course the series in \eqref{eq:taylornu} in general
does not converge to \(\chi(x(t_0+h))\); it is a \emph{formal series}, whose truncations provide the required
Taylor approximations.

So far it has been assumed that \(\chi\) is scalar-valued. For a vector-valued \(\chi\), the Taylor expansion
is also given by \eqref{eq:taylornu}, with the differential operators \(D_w\)  defined to act
componentwise. The particular choice where \(\chi:\R^d \rightarrow \R^d\) is taken to be the identity function
\(x\mapsto x\), yields the expansion of the solution \(x(t_0+h)\) itself given by
\begin{equation}\label{eq:taylornusol}
\sum_{\nu\in\Nh}\:
\sum_{w\in\W,\,{\|w\|=\nu}}
J_w(t_0+h;t_0)
f_w(x(t_0)),
\end{equation}
where,  \(f_w(x(t_0))\) denotes the result of  applying \(D_w\)
 to the identity function and then evaluating at \(x(t_0)\). Note that the functions
\(f_w\) may be constructed  from the fields \(f_\ell\) in \eqref{eq:stratonovich} with the help of the recursion
\begin{equation}\label{eq:recurrencebasis}
f_{\ell_n\dots \ell_1}(x) = f^\prime_{\ell_{n-1}\dots \ell_1}(x)f_{\ell_n}(x),\qquad n\geq 1,
\end{equation}
where \(f^\prime_{\ell_{n-1}\dots \ell_1}(x)\) stands for the value at \(x\) of the Jacobian matrix of \(
f_{\ell_{n-1}\dots \ell_1}\).
\subsection{The Ito-Taylor expansion}The Taylor expansion of the solution of Ito stochastic differential
system was first derived by Platen and Wagner \cite{pw}.  For \eqref{eq:ito}, formula \eqref{eq:intchi} has to
be replaced by
\begin{eqnarray}
\chi(x(t_0+h))
&=& \chi(x(t_0)) + \int_{s_1=t_0}^{t_0+h} \sum_{a\in\Adet} D_a\chi(x(s_1))\, ds_1\nonumber\\
&& \phantom{\chi(x(t_0))}+ \int_{s_1=t_0}^{t_0+h}\sum_{A\in\Asto} D_A\chi(x(s_1)) d\B_A(s_1)\nonumber\\
&&\phantom{\chi(x(t_0))}+ \int_{s_1=t_0}^{t_0+h} \sum_{A\in\Asto} D_{\bar A}\chi(x(s_1))\, ds_1;\label{eq:intchiito}
\end{eqnarray}
the last term in the right-hand side is the Ito correction, where, for each stochastic letter \(A\), \(D_{\bar
A}\) represents the second-order, linear differential operator
\begin{equation}\label{eq:differentialoperatorbis}
D_{\bar A} \chi(x) = \frac{1}{2} \sum_{i,j=1}^d f_A^i(x)f_A^j(x) \frac{\partial^2}{\partial x^i\partial x^j}\chi(x)
= \frac{1}{2} \chi^{\prime\prime}(x) [f_A(x),f_A(x)].
\end{equation}
The symbol \(\chi^{\prime\prime}\) represents the second (Fr\'{e}chet) derivative of \(\chi\); its value \(\chi^{\prime\prime}(x)\) at a point \(x\in\R^s\) is a bilinear map defined on \(\R^d\times \R^d\) and
\(\chi^{\prime\prime}(x) [f_A(x),f_A(x)]\) is the image by this map of the pair of vectors \([f_A(x),f_A(x)]\).

In order to write \eqref{eq:intchiito} more compactly, we introduce the \emph{extended alphabet}
\(\eA=\eAdet\cup\eAsto\). The extended set \(\eAsto\) of stochastic letters coincides with the old \(\Asto\),
i.e.\ with the set of indices in the second sum in \eqref{eq:ito}; the extended set \(\eAdet\) comprises the
indices \(a\) in the first sum in \eqref{eq:ito} \emph{and,} in addition, a letter \(\bar A\) for each \(A\in
\eAsto =\Asto\). With these notations, \eqref{eq:intchiito} becomes
\[
 \chi(x(t_0+h)) = \chi(x(t_0)) + \int_{s_1=t_0}^{t_0+h} \sum_{\ell_1\in\eA} D_{\ell_1}\chi(x(s_1))\, d\B_{\ell_1}(s_1)
 \]
 (\(d\B_{\ell_1}(s_1) =ds_1\) for \(\ell_1\in\eAdet\)); this is the Ito counterpart of the right-most expression in
 \eqref{eq:intchi}. By
 iterating as in the Stratonovich case, we obtain the series
 \begin{equation}\label{eq:taylorito}
\chi(x(t_0)) +\sum_{n=1}^\infty\: \sum_{\ell_1,\dots ,\ell_n\in\eA}I_{\ell_n\dots \ell_1}(t_0+h;t_0)
D_{\ell_n}\cdots D_{\ell_1}\chi(x(t_0))
\end{equation}
where \(I_{\ell_n\dots \ell_1}(t_0+h;t_0)\) denotes the Ito \emph{iterated stochastic integral}
\[
I_{\ell_n\dots \ell_1}(t_0+h;t_0) = \int_{s_1=t_0}^{t_0+h} d\B_{\ell_1}(s_1) \cdots \int_{s_n=t_0}^{s_{n-1}} d\B_{\ell_n}(s_n).
\]
These iterated integrals satisfy the obvious analogue of the recursion \eqref{eq:recurrenceintegrals}. Again the iterated integrals do not change if the vector fields are changed and the operators \(D_\ell\) do not change if the Brownian processes are changed.

We now consider the set of words \(\overline{\W}\) constructed with the letters of the extended  alphabet \(\eA\),
and
write \(D_w = D_{\ell_n}\cdots D_{\ell_1}\) for \(w= \ell_n\dots \ell_1\in\overline \W\), \(n>0\),
 (recall that \(D_\ell\) is a second order operator if \(\ell\) is of the form \(\bar A\),
\(A\in\Asto\)), \(D_\emptyset =Id\), \(I_\emptyset=1\). Then \eqref{eq:taylorito} has the compact expression
\begin{equation}\label{eq:taylor2ito}
\sum_{w\in\overline\W}I_w(t_0+h;t_0)
D_w\chi(x(t_0)).
\end{equation}

If  letters in \(\eAdet\) are again declared to have weight \(1\) and letters in \(\eAsto\) to have weight
\(1/2\), we have the following result, whose proof is similar to that of
Proposition~\ref{prop:iteratedintegrals}:

\begin{prop}\label{prop:iteratedintegralsito}
The Ito iterated integrals \(I_w(t_0+h;t_0)\) possess the properties of the Stratonovich iterated  integrals
listed in Proposition \ref{prop:iteratedintegrals}
\end{prop}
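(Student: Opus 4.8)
The plan is to mirror the proof of Proposition~\ref{prop:iteratedintegrals} item by item, exploiting the fact that the only structural difference between the Ito iterated integrals \(I_w\) and the Stratonovich ones \(J_w\) is the replacement of Stratonovich integration \(\circ d\B_A\) by Ito integration \(d\B_A\), and the use of the extended alphabet \(\eA\) (whose letters \(\bar A\) carry weight \(1/2+1/2=1\), consistently with the fact that, under the declared convention, \(\bar A\in\eAdet\) is assigned weight \(1\)). First I would recall the recursion \(I_{\ell_n\dots\ell_1}(t_0+h;t_0)=\int_{t_0}^{t_0+h}I_{\ell_n\dots\ell_2}(s;t_0)\,d\B_{\ell_1}(s)\), with \(d\B_{\ell_1}(s)=ds\) for \(\ell_1\in\eAdet\) and \(I_\emptyset=1\), which is the exact analogue of \eqref{eq:recurrenceintegrals}. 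Everything then follows by induction on the number of letters \(n=|w|\).

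For the first property (\(t_0\)- and \(h\)-independence of the joint law of the rescaled family \(\{h^{-\|w\|}I_w(t_0+h;t_0)\}\)), I would invoke the scaling/translation invariance of Brownian motion: the processes \(s\mapsto h^{-1/2}\big(\B_A(t_0+hs)-\B_A(t_0)\big)\), \(A\in\Asto\), are again a family of independent standard Wiener processes, whose joint law does not depend on \(t_0\) or \(h\). A change of variables \(s_j=t_0+h\sigma_j\) in the iterated Ito integral, together with the Ito scaling \(d\B_A(t_0+h\sigma)=h^{1/2}\,d\widetilde\B_A(\sigma)\) and \(ds_j=h\,d\sigma_j\), shows that \(I_w(t_0+h;t_0)\) equals \(h^{\|w\|}\) times the corresponding iterated Ito integral of the rescaled Brownians over \([0,1]\) — the exponent being \(\|w\|\) precisely because each deterministic letter (including each \(\bar A\)) contributes a factor \(h\) and each stochastic letter a factor \(h^{1/2}\). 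Since the rescaled Brownians have a law independent of \((t_0,h)\), so does the joint law of the family of rescaled integrals. For the second property, finiteness of all moments \(\E|I_w|^p<\infty\) follows by induction from the recursion: the deterministic step is a bounded-length time integral, which preserves all \(L^p\) bounds; the stochastic step is an Ito integral, and the Burkholder–Davis–Gundy inequalities bound \(\E\big|\int_{t_0}^{t_0+h}I_{\ell_n\dots\ell_2}(s;t_0)\,d\B_{\ell_1}(s)\big|^p\) in terms of \(\E\big(\int_{t_0}^{t_0+h}|I_{\ell_n\dots\ell_2}(s;t_0)|^2\,ds\big)^{p/2}\), which is finite by the inductive hypothesis and Jensen. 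The third property (the \(t_0\)-independent \(L^p\) norm is \(\mathcal O(h^{\|w\|})\) as \(h\downarrow0\)) is then immediate from the first two: by the first property, \(\|I_w(t_0+h;t_0)\|_{L^p}=h^{\|w\|}\|I_w(1;0)\|_{L^p}\) with the latter a finite constant by the second property.

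The fourth property — \(\E\,I_w(t_0+h;t_0)=0\) whenever \(\|w\|\) is not an integer — is where one actually needs to use that the integrals are Ito rather than Stratonovich, and this is the step I expect to require the most care. Note \(\|w\|\notin\Z\) is equivalent to saying that the number of stochastic letters from \(\eAsto\) appearing in \(w\) is odd (letters \(\bar A\) have integer weight \(1\) and so do not affect parity). Write \(w=\ell_n\dots\ell_1\) and let \(\ell_j\) be the \emph{innermost} stochastic letter, i.e.\ the stochastic letter with the largest index \(j\) (equivalently, the last one encountered when writing the integral). Because the integration is in the Ito sense, the expectation can be computed by conditioning and using that the Ito integral \(\int d\B_{\ell_j}\) of an adapted integrand has zero conditional expectation — more precisely one peels off, from the innermost integral outward, all the purely deterministic time-integrations (which commute with expectation by Fubini) until one reaches \(\int_{t_0}^{s_{j-1}}(\cdots)\,d\B_{\ell_j}(s_j)\); taking \(\E\) and using the Ito-isometry/martingale property kills this factor when the remaining inner integrand is a genuine (non-constant-coefficient) adapted process, which is automatically the case because there is at least one more stochastic letter further in (here the oddness is used: after removing one stochastic letter an odd count becomes nonzero, so at least one stochastic letter remains inside). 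I would then make this rigorous by an induction that strips stochastic letters in pairs, or alternatively by quoting the standard fact (e.g.\ from \cite{kloeden}) that a multiple Ito integral has zero mean unless its multi-index contains only deterministic components and an even number of occurrences of each Brownian index appearing — but since the paper's style is to keep the analytic details light and refer to \cite{alfonso,orlando} for rigour, it would suffice to remark that the argument is the verbatim analogue of the Stratonovich proof with the Ito martingale property replacing the Wong–Zakai conversion, the parity obstruction being identical. \(\Box\)
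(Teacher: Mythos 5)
The paper offers no argument for this proposition: it simply declares the proof ``similar to'' that of Proposition~\ref{prop:iteratedintegrals}, whose own proof is delegated to \cite{alfonso}. So your task was to supply the standard argument, and for the first three items you do exactly that, along the expected lines: Brownian translation/scaling invariance plus the substitution \(s_j=t_0+h\sigma_j\) for the first item (correctly noting that every letter of \(\eAdet\), including each \(\bar A\), contributes a factor \(h\) and every letter of \(\eAsto\) a factor \(h^{1/2}\)); induction through the recursion with Fubini and Burkholder--Davis--Gundy for the second; and the combination of the two, \(\|I_w(t_0+h;t_0)\|_{L^p}=h^{\|w\|}\|I_w(1;0)\|_{L^p}\), for the third. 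No complaints there.

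Your fourth item, however, is written around a false premise, even though the conclusion survives. The zero-mean property \(\E\int_{t_0}^{t_0+h}X(s)\,d\B_A(s)=0\) holds for \emph{every} adapted square-integrable integrand, constants included; there is no need for the integrand to be a ``genuine (non-constant-coefficient) adapted process'', hence no parity argument at all. Your auxiliary claim that ``after removing one stochastic letter an odd count becomes nonzero'' is simply false when the count is one (e.g.\ \(w=A\)), so as written your justification misses precisely the simplest non-integer-weight words; it is rescued only because the requirement it tries to secure is not needed. The clean statement is the stronger one the paper itself records later (Proposition~\ref{prop:generator}): \(\E I_w=0\) as soon as \(w\) contains at least one letter of \(\eAsto\). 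To prove it, peel deterministic time-integrations from the \emph{outermost} integral inward (Fubini), i.e.\ strip deterministic letters from the right end of \(w\); the first time the outermost remaining integration is stochastic, the martingale property gives zero. Note that with the convention \(I_{\ell_n\dots\ell_1}=\int d\B_{\ell_1}\cdots\int d\B_{\ell_n}\) it is the rightmost (outermost) stochastic letter that matters, not the innermost one you single out, and expectation cannot be pushed past outer integrations to ``kill'' an inner factor. Finally, the ``standard fact'' you propose to quote is misstated: the even-multiplicity criterion belongs to the Stratonovich case, whereas a multiple Ito integral has zero mean whenever any Brownian component appears. Since non-integer \(\|w\|\) forces an odd --- in particular nonzero --- number of stochastic letters, item four follows at once; the parity itself plays no role in the Ito setting.
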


 The series \eqref{eq:taylor2ito} is rewritten as
\begin{equation}\label{eq:taylornuito}
\sum_{\nu\in\Nh}\:
\sum_{w\in\overline\W,\,{\|w\|=\nu}}
I_w(t_0+h;t_0)
D_w\chi(x(t_0)),
\end{equation}
and for the solution itself we have the Taylor series
\[
\sum_{\nu\in\Nh, \nu}\:
\sum_{w\in\overline\W,\,{\|w\|=\nu}}
I_w(t_0+h;t_0)
f_w(x(t_0)),
\]
where \(f_w(x(t_0))\), \(w=\ell_n\dots\ell_1\) denotes the result of successively applying \(D_{\ell_1}\),
\dots, \(D_{\ell_n}\) to the identity function and then evaluating at \(x(t_0)\). The \(f_w\) satisfy
\eqref{eq:recurrencebasis} if \(\ell_n\in \Adet\cup \Asto\) and
\begin{equation}\label{eq:recursionbasistwo}
f_{\ell_n\dots \ell_1}(x) = \frac{1}{2} \Big(f^{\prime\prime}_{\ell_{n-1}\dots \ell_1}(x)\Big)
[ f_A(x),f_A(x)],\qquad n\geq 1,
\end{equation}
for \(\ell_n=\bar A\) with \(A\in\Asto\). Since the second derivatives of the identity function vanish we have the following result.

\begin{prop}If the last (i.e.\ right-most) letter of \(w\in\overline{\W}\) is of the form \(\bar A\) with \(A\in\Asto\), then
\(f_w\) vanish identically.
\end{prop}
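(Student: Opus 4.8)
The plan is to deduce the claim directly from the way the functions $f_w$ are constructed, so that essentially no computation is needed; I would present it either through the differential operators or through the recursions \eqref{eq:recurrencebasis}--\eqref{eq:recursionbasistwo}, whichever fits the surrounding exposition better.

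The quickest route uses $f_w = D_w(\mathrm{id})$, where $\mathrm{id}$ denotes the identity function $x\mapsto x$. Write $w=\ell_n\cdots\ell_1$ with right-most letter $\ell_1=\bar A$, $A\in\Asto$. Then $f_w$ is obtained by first applying $D_{\ell_1}=D_{\bar A}$ to $\mathrm{id}$ and then applying $D_{\ell_2},\dots,D_{\ell_n}$ successively. By \eqref{eq:differentialoperatorbis}, $D_{\bar A}\chi$ involves only the second Fr\'echet derivative of $\chi$; since $\mathrm{id}$ is linear, $\mathrm{id}''\equiv 0$, and hence $D_{\bar A}(\mathrm{id})\equiv 0$. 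As every $D_\ell$ is a linear differential operator, it sends the zero function to the zero function, so applying $D_{\ell_2},\dots,D_{\ell_n}$ to $0$ still yields $0$; therefore $f_w\equiv 0$.

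Alternatively, I would induct on the number $n$ of letters of $w$, using \eqref{eq:recurrencebasis}--\eqref{eq:recursionbasistwo}. For $n=1$, $w=\bar A$, the recursion \eqref{eq:recursionbasistwo} with $f_\emptyset=\mathrm{id}$ gives $f_{\bar A}(x)=\tfrac{1}{2}\big(f''_\emptyset(x)\big)[f_A(x),f_A(x)]\equiv 0$. For $n\geq 2$ I would set $w'=\ell_{n-1}\cdots\ell_1$, whose right-most letter is still $\bar A$, so $f_{w'}\equiv 0$ by the inductive hypothesis; then if $\ell_n\in\Adet\cup\Asto$, \eqref{eq:recurrencebasis} gives $f_w(x)=f'_{w'}(x)f_{\ell_n}(x)\equiv 0$ since $f'_{w'}\equiv 0$, while if $\ell_n=\bar B$ with $B\in\Asto$, \eqref{eq:recursionbasistwo} expresses $f_w$ through $f''_{w'}\equiv 0$, again giving $f_w\equiv 0$. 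This closes the induction.

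I do not expect any real obstacle: the statement amounts to the fact that the identity map has vanishing second derivative together with the trivial observation that differentiation sends the zero function to the zero function, so the property of vanishing identically propagates along either construction of $f_w$. The only point needing a little care is to set the induction up on the length of $w$ and to note that removing the leftmost letter never disturbs the distinguished right-most letter $\bar A$, so the inductive hypothesis always applies.
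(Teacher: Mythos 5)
Your proof is correct and is essentially the paper's own argument: the paper disposes of this proposition with the single observation that the second derivatives of the identity function vanish, which is exactly your point that $D_{\bar A}(\mathrm{id})\equiv 0$ (equivalently, the base case of your induction via \eqref{eq:recursionbasistwo}), after which applying further linear operators, or peeling off leftmost letters in the recursions, preserves the zero function.
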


Therefore, after suppressing the \(f_w\) that vanish identically, the Taylor expansion may be written:
\begin{equation}\label{eq:taylornusolito}
\sum_{\nu\in\Nh, \nu}\:
\sum_{w\in\overline\W_0,\,{\|w\|=\nu}}
I_w(t_0+h;t_0)
f_w(x(t_0)),
\end{equation}
where \(\overline{\W}_0\) is the subset of \(\overline{\W}\) consisting of words whose last letter is not one of the
\(\bar A\), \(A\in\Asto\).
\section{Analyzing splitting integrators: preliminaries}

\subsection{Splitting integrators}

In order  to avoid notational complications, let us momentarily consider only the simple instance of \eqref{eq:stratonovich} given by
\begin{equation}\label{eq:stratonovichparticular}
dx =f_a(x)\, dt+ f_A(x)\circ d\B_A.
\end{equation}
\emph{Splitting integrators} may be applied to integrate this system if one may solve in closed form the
\emph{split systems}
\begin{equation}\label{eq:splita}
dx =f_a(x)\, dt
\end{equation}
and
\begin{equation}\label{eq:splitA}
dx = f_A(x)\circ d\B_A.
\end{equation}
In the simplest splitting integrator, the Lie-Trotter algorithm, the numerical solution is advanced from its
value \(x_n\) at a time level \(t_n\) to the value \(x_{n+1}\) at the next time level \(t_{n+1}\) by first
integrating \eqref{eq:splita} in the interval \([t_n,t_{n+1}]\) with initial condition \(x_n\) to get a value
\(\tilde {x}_n\) and then using \(\tilde {x}_n\) as initial condition to integrate \eqref{eq:splitA} in the
interval \([t_n,t_{n+1}]\) to obtain \(x_{n+1}\). In this way, the simultaneous contributions of \(f_a\) and
\(f_A\) in \eqref{eq:stratonovichparticular} are replaced by successive contributions. The procedure is best
described by introducing, for \(0\leq s\leq t\), the solution maps \(\varphi_{t,s}^{(a)}\),
\(\varphi_{t,s}^{(A)}\) of \eqref{eq:splita} and \eqref{eq:splitA}; by definition, \(\varphi_{t,s}^{(a)}\)
(respectively \(\varphi_{t,s}^{(A)}\)) maps \(x\in\R^d\) into the value at time \(t\) of the solution of
\eqref{eq:splita} (respectively \eqref{eq:splitA}) with initial value \(x\) at time \(s\). Note that, for the
autonomous deterministic system \eqref{eq:splita}, \(\varphi_{t,s}^{(a)}\) depends  on \(t\) and \(s\) only
through the combination (elapsed time) \(t-s\), but that is not the case for \(\varphi_{t,s}^{(A)}\). In
addition \(\varphi_{t,s}^{(a)}\) makes sense for \(t<s\), but \(\varphi_{t,s}^{(A)}\) does not, because
stochastic differential equations cannot be evolved backward in time. With this notation in place, one step of
the Lie-Trotter algorithm described above is given by \(x_{n+1} =\psi_{t_{n+1},t_n}(x_n)\), where
\begin{equation}\label{eq:lietrotter}
\psi_{t_{n+1},t_n}=\varphi_{t_{n+1},t_n}^{(A)}\circ \varphi_{t_{n+1},t_n}^{(a)}.
\end{equation}
Of course one may also consider the alternative algorithms given by \(\varphi_{t_{n+1},t_n}^{(a)}\circ \varphi_{t_{n+1},t_n}^{(A)}\) or  the well-known symmetric compositions
\[
\varphi_{t_{n+1},t_{n+1/2}}^{(a)}\circ\varphi_{t_{n+1},t_n}^{(A)}\circ \varphi_{t_{n+1/2},t_n}^{(a)}
\]
or
\[
\varphi_{t_{n+1},t_{n+1/2}}^{(A)}\circ\varphi_{t_{n+1},t_n}^{(a)}\circ \varphi_{t_{n+1/2},t_n}^{(A)}
\]
associated with Strang's name (\(t_{n+1/2}\) is the midpoint of \([t_n,t_{n+1}]\)). More involved splitting
algorithms are obtained by composing four or more solution maps of the split systems.

Leaving now the particular instance \eqref{eq:stratonovichparticular}, for a problem of the general form
\eqref{eq:stratonovich}   the splitting-integrator mapping \(x_{n+1}=\psi_{t_{n+1},t_n}(x_n)\) is a
composition of solution operators
\begin{equation}\label{eq:mapscomposed}
\varphi^{(i)}_{t_n+d_i(t_{n+1}-t_n),t_n+c_i(t_{n+1}-t_n)},\qquad i =1,\dots,m.
\end{equation}
Here \(c_i\) and \(d_i\) are constants and the superindex \(i\) refers to a system of differential equations
obtained by taking into account a subset \(\mathcal{S}_i\), \(i=1,\dots,m\) of the fields \(f_\ell\)  in
\eqref{eq:stratonovich}; it has to be supposed that these systems are solvable in closed form. For our
purposes here, there is complete freedom when choosing the different \(\mathcal{S}_i\); it is possible to have
\(\mathcal{S}_i=\mathcal{S}_j\) for \(i\neq j\) (as in Strang's method where \(\mathcal{S}_1=\mathcal{S}_3\))
or to let given vector field \(f_\ell\) appear in \(\mathcal{S}_i\) and \(\mathcal{S}_j\) with
\(\mathcal{S}_i\neq \mathcal{S}_j\). It is important to note that it is necessary to assume throughout that
\[
c_i < d_i
\]
except in the case where \(\mathcal{S}_i\) is a deterministic system; stochastic differential equations cannot
be evolved backward in time.

The Ito case can be dealt with in the same way; the only difference is that the solution operators of the
 systems \(\mathcal{S}_i\) have to be based on the Ito interpretation.

\subsection{The local error}
An essential part of the  analysis of any one-step  integrator \(x_{n+1}=\psi_{t_{n+1},t_n}(x_n)\) is the
study of the corresponding \emph{local error}
 (or truncation error). By definition, if \(\varphi_{t,s}\) denotes the solution operator of the system
 \eqref{eq:stratonovich} or
 \eqref{eq:ito}   being integrated, the local error is the difference
 \begin{equation}\label{eq:localerror}\psi_{t_{n+1},t_n}(x_n)-\varphi_{t_{n+1},t_n}(x_n).\end{equation}
  In what follows we just consider the case \(\psi_{t_{1},t_0}(x_0)-\varphi_{t_{1},t_0}(x_0)\);
  the case with general \(n\) differs from this only in notation. Furthermore we write \(t_1=t_0+h\),
  where \(h>0\) is the step-length. Our aim is to understand the behaviour of \eqref{eq:localerror} as \(h\downarrow 0\) and this is achieved by Taylor expansion. In the particular
case of the Lie-Trotter integrator \eqref{eq:lietrotter} for the simple system
\eqref{eq:stratonovichparticular}, we have therefore to Taylor expand
 \begin{equation}\label{eq:aux0}
 x_1=\varphi_{t_{1},t_0}^{(A)}\Big (\varphi_{t_{1},t_0}^{(a)}(x_0)\Big)
 \end{equation}
 and compare the result with the expansion of  \(\varphi_{t_{1},t_0}(x_0)\)
 found in the preceding section. Note that if we write
 \begin{equation}\label{eq:aux1}\tilde{x}_0 = \varphi_{t_{1},t_0}^{(a)}(x_0),
 \end{equation}
 so that
 \begin{equation}\label{eq:aux2}
  x_1 =\varphi_{t_{1},t_0}^{(A)}(\tilde{x}_0)
  \end{equation}
 the expansions \(\sum_i
 c_i F_i(x_0)\)  of \eqref{eq:aux1} at \(x_0\) and \(\sum_j
 d_j G_j(\tilde{x}_0)\) of \eqref{eq:aux2}
 at \( \tilde{x}_0\) are both known;
 they are particular instances of \eqref{eq:taylornusol} corresponding to alphabets with the single letter
 \(a\) or \(A\) respectively. Then expansion for \eqref{eq:aux0} may be obtained by substituting to get
 \[
 \sum_j
 d_j G_j\big(\sum_i
 c_i F_i(x_0)\big),
 \]
 Taylor expanding  each \(G_j\big( \sum_i
 c_i F_i(x_0)\big)\) and gathering terms of equal weight.
 For more complicated splitting integrators there are \(m\) mappings being composed and implementing the naive
 substitution we have described may be a daunting task.
We are thus led to the following:
\medskip

 \emph{Problem P: Find efficiently the
 expansion
 of a composition of mappings \(\varphi^{(m)}\circ \cdots\circ\varphi^{(1)}\), when
 \(\varphi^{(i)}\), \(i=1,\dots, m\),  have known expansions  of the form \eqref{eq:taylornusol} (or \eqref{eq:taylornusolito}
 for the Ito case).
 }
 \medskip

  The solution to this problem presented in the next section is based on expanding pullback operators
  (see e.g.\ \cite{k2}) rather
  than mappings.

 \subsection{Pullbacks}

 Associated with any mapping \(\varphi:\R^d\rightarrow \R^d\), there is a pullback operator \(\Phi\). By definition,
  \(\Phi\)  maps each observable \(\chi\) into the observable \(\Phi \chi\) whose value at \(x\in\R^d\) is
 (\(\Phi\chi)(x) = \chi(y)\) with \(y=\varphi(x)\) (\(\varphi\) pushes the point \(x\) forward to \(y\), while \(\Phi\)
  takes  values of the observable \lq\lq back\rq\rq\ from \(y\) to \(x\)). The pullback operator
  corresponding to a composition \(\varphi^{(2)}\circ\varphi^{(1)}\) is the composition of operators
  \(\Phi^{(1)}\Phi^{(2)}\) (note the reversed order) because
  \[
\Big(\Phi^{(1)}(\Phi^{(2)}\chi)\Big) (x) = (\Phi^{(2)} \chi)(\varphi^{(1)}(x))
=\chi\Big(\varphi^{(2)}(\varphi^{(1)}(x))\Big).
  \]

  A map and its pullback operator contain the same information: when the operator \(\Phi\) is known,
  one may retrieve the underlying map \(\varphi\) by applying \(\Phi\)
  to the identity  \(x\mapsto x\) in \(\R^s\).
Recovering \(\Phi\) from \(\varphi\) is  similar to what was done \emph{for formal series} rather than for
maps to obtain \eqref{eq:taylornusol} from
  \eqref{eq:taylornu} (or \eqref{eq:taylornusolito} from \eqref{eq:taylornuito} in the Ito case). Taking this point
  further, from \eqref{eq:taylornu} we may consider that the series
  \begin{equation}\label{eq:expansionpullback}
  \sum_{\nu\in\Nh}\:
\sum_{w\in\W,\,{\|w\|=\nu}}
J_w(t_0+h;t_0)
D_w,
  \end{equation}
 provides the Taylor
  expansion of the pullback operator of the solution operator \(\varphi_{t_0+h,t_0}\) of \eqref{eq:stratonovich}.
  For the Ito case \eqref{eq:expansionpullback} is
   replaced by
  \begin{equation}\label{eq:expansionpullbackito}
  \sum_{\nu\in\Nh}\:
\sum_{w\in\overline\W,\,{\|w\|=\nu}}
I_w(t_0+h;t_0)
D_w.
  \end{equation}

  In this way  the problem posed above may be reformulated
  as:\medskip

 \emph{Problem P': Find efficiently the expansion of a composition \(\Phi^{(1)}\cdots\Phi^{(m)}\) of pullback operators,
 when the operators
 \(\Phi^{(i)}\), \(i=1,\dots, m\),  have known expansions of the form  \eqref{eq:expansionpullback} (or
 \eqref{eq:expansionpullbackito}
 for the Ito case).
 }

 The idea of using pullback (differential) operators to analyze local errors is old. Merson \cite{merson} used
 it in 1957 to study Runge-Kutta formulas; however the subsequent treatment in Butcher \cite{butcher63} did
 away with differential operators and worked only with elementary differentials (mappings). In the stochastic
 case it is convenient to work both with differential operators and mappings, as it will become clear below.

\section{Series}
We now  solve the problem P' (and by implication P) with the help of some simple algebraic/combinatorial
tools.

\subsection{Series for Stratonovich problems}
\subsubsection{Series}

  Words in \(\W\) are multiplied by \emph{concatenation}, i.e.\ if \(v = k_1\dots k_m\), \(w = \ell_1\dots\ell_n\) are words
   with \(m\) and \(n\) letters respectively, their product is the word with \(m+n\) letters
   \(vw = k_1\dots k_m\ell_1\dots\ell_n\). In particular \(\emptyset\emptyset = \emptyset\), \(\emptyset w =
   w\emptyset = w\). Concatenation is associative but it is not commutative.

   The vector space \(\R\langle \A\rangle\) consists, by definition, of all linear combinations of words
\( \sum_{w\in\W} c_w w\) (only a finite number of coefficients \(c_w\in\R\) are nonzero). The multiplication of words by concatenation is extended in an obvious way to elements of \(\R\langle \A\rangle\),
\begin{equation}\label{eq:multiplication}
\sum_{v\in\W} c_v v \sum_{w\in\W} d_{w} w = \sum_{v,w\in\W} c_vd_w uw,
\end{equation}
and then \(\R\langle \A\rangle\) becomes a noncommutative, associative algebra.

In addition, we need to consider the larger noncommutative algebra \(\R\langle\langle \A\rangle\rangle\) of
 formal series. These are formal expressions \( \sum_{w\in\W} c_w w\) where it is not any longer assumed
that only finitely many coefficients \(c_w\) are \(\neq 0\). If \(S\in\R\langle\langle \A\rangle\rangle\), we
denote the corresponding coefficients by \(S_w\), i.e.\ \(S = \sum_{w\in W}S_ww\).
 Formal series are combined linearly in an obvious way and  are multiplied as in
\eqref{eq:multiplication}, where we note that the right-hand side is well defined, even if infinitely many
\(c_v\) and \(d_w\) do not vanish, because the number of ways in which a given \(u\in\W\) may be  written as a
concatenation \(u=vw\) is finite. More precisely, if we denote by \(\R^\W\) the set of all sequences of
coefficients \(\{c_w\}_ {w\in\W}\)  indexed by words,  then the product in \eqref{eq:multiplication} is the
 series \(\sum_{u\in\W} e_u u\in\R\langle\langle\A\rangle\rangle\) with coefficients \(\{e_u\}_{u\in\W}\)
such that \(e_\emptyset = c_\emptyset d_\emptyset\) and , for each nonempty word \(u = \ell_1\dots\ell_n\),
\begin{equation}\label{eq:convolution}
e_{\ell_1\dots\ell_n} = c_\emptyset d_{\ell_1\dots\ell_n}
 + \sum_{m=1}^{n-1} c_{\ell_1\dots\ell_m}d_{\ell_{m+1}\dots \ell_n} + c_{\ell_1\dots\ell_n}d_\emptyset.
\end{equation}
 The right-hand
side of this formula contains all the ways of writing \(u = \ell_1\dots\ell_n\) as a concatenation of two
(possibly empty) words. Thus \eqref{eq:convolution} defines a (noncommutative, associative) product in the set
\(\R^\W\) of sequences of coefficients,
 the so-called \emph{convolution} product, in such a way that
the product of  series \(S\in\R\langle\langle \A\rangle\rangle\)  corresponds to the convolution product
of the sequences of coefficients \(\{S_w\}_ {w\in\W}\in\R^\W\).

A general well-known reference to the combinatorics of words is \cite{reut}.

\subsubsection{Series of differential operators}
  Given the vector fields \(f_\ell\) in \eqref{eq:stratonovich}, the concatenation product
 of words obviously corresponds to the composition of the associated differential operators: \(D_{vw} = D_vD_w\)
  for any \(v,w\in\W\) (by definition, \((D_vD_w)\chi= D_v(D_w\chi)\) for each observable \(\chi\)).

With the series \( S\in \R\langle\langle \A\rangle\rangle\) we associate the formal series of differential
operators
 \(D_S =\sum_{w\in\W} S_w D_w\). It follows that \( S, S^\prime\in
\R\langle\langle \A\rangle\rangle\), the product (composition) \(D_SD_{S^\prime}\) is the series
\(D_{SS^\prime}\) associated with \(SS^\prime\), whose coefficients, as we know, are given by the convolution
 product of the coefficients of
\(S\) and \(S^\prime\). Series of differential operators are a common tool in control theory, see e.g.
\cite{f}.

With the terminology we have introduced, for each fixed \(t\), \(t_0\) and for each event in the underlying
probability space, the expansion in \eqref{eq:expansionpullback} coincides with \(D_S \) when the coefficients
are  \(S_w =J_w(t_0+h;t_0)\), \(w\in\W\). Since we have just described how to multiply series \(D_S\), we have
solved the problem P' posed in the previous section.

We illustrate the technique by means of a simple example. We integrate the system
\[dx = f_a(x)dt+f_b(x)dt+f_A(x)\circ d\B_A,
\]
with the help of the split systems
\[
(1)\:dx = f_a(x)dt+f_A(x)\circ d\B_A,\qquad (2)\:dx = f_b(x)dt.
\]
We use the Lie-Trotter formula \(\varphi^{(2)}\circ\varphi^{(1)}\). According to \eqref{eq:expansionpullback}
(when the alphabet is chosen to be \(\{a,A\}\)), the expansion of \(\Phi^{(1)}\) is
\[
Id+J_AD_A+J_aD_a+J_{AA}D_{AA}+ J_{aA}D_{aA}+J_{Aa}D_{Aa}+J_{AAA}D_{AAA}+\mathcal{O}(2),
\]
where \(\mathcal{O}(2)\) denotes the terms in the series with weight \(\geq 2\), and \(J_A\), \(J_a\), \dots\ stand
for \(J_A(t_0+h;t_0)\), \(J_a(t_0+h;t_0)\), \dots\ Similarly, the expansion of \(\Phi^{(2)}\) is
\[
Id+J_bD_b+\mathcal{O}(2).
\]
Multiplying out, we obtain the expansion for the product \(\Phi^{(1)}\Phi^{(2)}\):
\begin{eqnarray*}
&&Id+J_AD_A+J_aD_a+J_bD_b+J_{AA}D_{AA}\\
&&\qquad+J_{aA}D_{aA}+J_{Aa}D_{Aa}+J_{A}J_bD_{Ab}+J_{AAA}D_{AAA}+\mathcal{O}(2).
\end{eqnarray*}
For the solution of the system being integrated, \eqref{eq:expansionpullback} (when the alphabet is \(\{a,b,A\}\)) yields
\begin{eqnarray*}
&&Id+J_AD_A+J_aD_a+J_bD_b+J_{AA}D_{AA}\\
&&\qquad+J_{aA}D_{aA}+J_{bA}D_{bA}+J_{Aa}D_{Aa}+J_{Ab}D_{Ab}+J_{AAA}D_{AAA}+\mathcal{O}(2),
\end{eqnarray*}
and subtracting we find that the pullback operator associated with the local error has the  expansion:
\begin{equation}\label{eq:localerrorexample}
(J_{A}J_b-J_{Ab})D_{Ab}-J_{bA}D_{bA}+\mathcal{O}(2).
\end{equation}
\subsubsection{Word series}

Given  the vector fields \(f_\ell\) in \eqref{eq:stratonovich}, with each  series \( S\in \R\langle\langle \A\rangle\rangle\) we associate the corresponding \emph{word
series} \(\W_S(x_0)\); this is obtained by applying \(D_S\) to the identity map \(x\in\R^d\mapsto x\):
\[
\W_S(x_0)=\sum_{w\in\W}S_w f_w(x_0).
\]
The functions \(f_w:\R^d\rightarrow \R^d\), \(w\in\W\),  we already encountered in \eqref{eq:taylornusol}, are
called \emph{word basis functions}. Recall that they may be found recursively via \eqref{eq:recurrencebasis}
from the \(f_\ell\) that appear in the system \eqref{eq:stratonovich}. Word series, introduced and studied in
\cite{anderfocm,part2,orlando,alfonso,words,k,k2}, may be seen as equivalent to series of differential
operators; the theory of words series is patterned after the theory of B-series \cite{HW} familiar to many
numerical analysts.

With the terminology above, for fixed \(t_0\) and \(h\) and each event in the underlying probability space,
the expansion \eqref{eq:taylornusol} is simply the word series with coefficients given by the iterated
integrals \(J_w\). In what follows we shall denote by \(J\) the series \(J=\sum_w
J_w(t_0+h;t_0)w\in\R\langle\langle \A\rangle\rangle\), so that \(D_J\) and \(\W_J\) are the corresponding
series of operators and word series. Formal series of words whose coefficients are iterated integrals are
often called \emph{Chen series}; they play a role in several mathematical developments, including the theory
of rough paths (see e.g.\ \cite{b}).

In the example we are discussisng, from \eqref{eq:localerrorexample} we obtain that the local error has the
expansion
\begin{equation}\label{eq:localerrorexamplebis}
(J_{A}J_b-J_{Ab})f_{Ab}-J_{bA}f_{bA}+\mathcal{O}(2),
\end{equation}
with \(f_{Ab} = f^\prime_bf_A\), \(f_{bA} = f^\prime_Af_b\).

\subsection{Series for Ito problems}
The preceding material is easily adapted to the Ito system \eqref{eq:ito}. The required changes are few. One
considers formal series \(S \in \R\langle\langle \eA\rangle\rangle\) (words are now based on the extended
alphabet) and to each \(S= \sum_{w\in\overline \W}S_ww\) associates a series of differential operators \(D_S =
\sum_{w\in\overline \W}S_wD_w\). The expansion \eqref{eq:expansionpullbackito} of the pullback of the solution
operator  is \(D_S\) when the coefficients of the series are chosen to be the Ito iterated  integrals. We
write this series as \(D_I\) and set \(I=\sum_{w\in\overline W} I_w(t_0+h;t_0)w\in \R\langle\langle
\eA\rangle\rangle\) for the corresponding \emph{Chen series}.

Here is an example. For the Ito  system corresponding to the alphabet \(\{a,b,A\}\), split as (1) \(\{a,A\}\),
(2) \(\{b\}\), the expansion of
of \(\Phi^{(1)}\)
\begin{eqnarray*}
&&Id+I_AD_A+I_aD_a+I_{\bar{A}}D_{\bar{A}}+I_{AA}D_{AA}\\
&&\qquad
+ I_{aA}D_{aA}+I_{\bar A A}D_{\bar A A}+I_{Aa}D_{Aa}+
I_{A\bar A}D_{A\bar A}+I_{AAA}D_{AAA}+\mathcal{O}(2),
\end{eqnarray*}
the expansion of \(\Phi^{(2)}\) is
\[
Id+I_bD_b+\mathcal{O}(2),
\]
and, multiplying out, the expansion
\(\Phi^{(1)}\Phi^{(2)}\)
is found to be
\begin{eqnarray*}
&&Id+I_AD_A+I_aD_a+I_{b}D_{b}+I_{\bar A}D_{\bar A}+I_{AA}D_{AA}\\
&&\qquad
+ I_{aA}D_{aA}+I_{\bar A A}D_{\bar A A}+I_{Aa}D_{Aa}+I_{A}I_b D_{Ab}
+I_{A\bar A}D_{A\bar A}+I_{AAA}D_{AAA}\\
&&\qquad
+\mathcal{O}(2).
\end{eqnarray*}
For the solution of the system being integrated we have
\begin{eqnarray*}
&&Id+I_AD_A+I_aD_a+I_{b}D_{b}+I_{\bar A}D_{\bar A}+I_{AA}D_{AA}\\
&&\qquad
+ I_{aA}D_{aA}+I_{bA}D_{bA}+I_{\bar A A}D_{\bar A A}+I_{Aa}D_{Aa}+I_{Ab} D_{Ab}
+I_{A\bar A}D_{A\bar A}\\
&&\qquad
+I_{AAA}D_{AAA}+\mathcal{O}(2),
\end{eqnarray*}
and, for the pullback of the truncation error,
\begin{equation}\label{eq:localerrorexampleito}
(I_{A}I_b-I_{Ab})D_{Ab}-I_{bA}D_{bA}+\mathcal{O}(2),
\end{equation}
while for the truncation error itself we have the word series expansion:
\begin{equation}\label{eq:localerrorexamplebisito}
(I_{A}I_b-I_{Ab})f_{Ab}-I_{bA}f_{bA}+\mathcal{O}(2),
\end{equation}
with \(f_{Ab} = f^\prime_bf_A\), \(f_{bA} =  f^\prime_Af_b\).
\section{The expansion of the local error. Error equations}
\label{sec:ordercond}
In this section we present the Taylor expansion of the local error  along with the  conditions that have to be imposed to achieve a target strong or weak order of consistency.

\subsection{Expanding the local error}

By applying the technique  in the previous section, the Taylor expansion of the mapping \(\psi_{t_0+h;t_0}\) that describes a splitting integrator for the Stratonovich system \eqref{eq:stratonovich} is  found as a word series
\[
\W_{\widetilde J}(x_0) =\sum_{w\in\W} \widetilde{J}_w(t_0+h;t_0) f_w(x_0).
\]
Here  \(\widetilde J_w(t_0;t_0+h)\) is either zero or a product of Stratonovich  iterated integrals
corresponding to words whose concatenation is \(w\) (see \eqref{eq:localerrorexamplebis} for an example).
Thus, in each product, the iterated integrals being multiplied correspond to words whose weights add up to
\(\|w\|\). In particular
\(\widetilde J_\emptyset(t_0;t_0+h)=1\). For the corresponding pullback we have the expansion
\[
D_{\widetilde J} = \sum_{w\in\W} \widetilde{J}_w(t_0+h;t_0) D_w
\]
(see \eqref{eq:localerrorexample}).

Similarly, in the Ito case, \(\psi_{t_0+h;t_0}\) has a word series expansion
\[
\W_{\widetilde{I}}(x_0) =\sum_{w\in\overline{\W}_0} \widetilde{I}_w(t_0+h;t_0) f_w(x_0)
\]
(see \eqref{eq:localerrorexamplebisito}) and for the associated pullback the expansion is
\[
D_{\widetilde I} =\sum_{w\in\overline\W} \widetilde{I}_w(t_0+h;t_0) D_w
\]
(see \eqref{eq:localerrorexampleito}).

The proof of the following technical result may be found in \cite{alfonso} for the Stratonovich case; the Ito
case is proved similarly.

\begin{prop}\label{prop:iteratedintegralsnum}The coefficients \(\widetilde{J}_w(t_0+h;t_0)\), \(w\in\W\),
possess the properties of the exact coefficients \({J}_w(t_0+h;t_0)\) listed in
Proposition~\ref{prop:iteratedintegrals}. The coefficients \(\widetilde{I}_w(t_0+h;t_0)\), \(w\in\overline\W\),
possess the properties of the exact coefficients \(I_w(t_0+h;t_0)\) listed in Proposition~\ref{prop:iteratedintegralsito}.
\end{prop}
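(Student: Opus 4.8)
The plan is to reduce the four assertions to the structural description of $\widetilde J_w$ recorded just above. By the convolution multiplication of series of differential operators (see \eqref{eq:convolution}), the pullback of the splitting map $\psi_{t_0+h;t_0}$ is a product $D_{S^{(1)}}\cdots D_{S^{(m)}}=D_{S^{(1)}\cdots S^{(m)}}$ of the pullback expansions of the solution operators \eqref{eq:mapscomposed}, where the coefficient of a word $v$ in $S^{(i)}$ is the iterated Stratonovich integral $J_v(t_0+d_ih;t_0+c_ih)$ if $v$ is built from the sub-alphabet attached to $\mathcal S_i$ and is $0$ otherwise. Hence, for every $w\in\W$,
\[
\widetilde J_w(t_0+h;t_0)=\sum_{w=w_1\cdots w_m}\ \prod_{i=1}^m J_{w_i}(t_0+d_ih;t_0+c_ih),
\]
the finite sum extending over the factorizations of $w$ into (possibly empty) subwords $w_i$ over the respective sub-alphabets, a term vanishing when some $w_i$ is not of that form (which accounts for $\widetilde J_w$ possibly being $0$). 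Since the weight is additive under concatenation, every surviving term satisfies $\sum_{i=1}^m\|w_i\|=\|w\|$. The first step is to make this bookkeeping precise; in the Ito case one replaces $\W$, $J$, $S^{(i)}$ by $\overline\W$, $I$ and allows the letters $\bar A$ in the sub-alphabets.

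The key device is a single global rescaling. Set $\widehat\B_A(\sigma):=h^{-1/2}\bigl(\B_A(t_0+h\sigma)-\B_A(t_0)\bigr)$ for $\sigma\in[0,1]$ and $A\in\Asto$. Changing variables $s=t_0+h\sigma$ in the iterated integrals — a pathwise identity for Stratonovich integrals, an a.s.\ one for Ito integrals by time-change invariance — gives $J_{w_i}(t_0+d_ih;t_0+c_ih)=h^{\|w_i\|}\,\widehat J_{w_i}(d_i;c_i)$, where $\widehat J_{w_i}(d_i;c_i)$ is the iterated integral built from the $\widehat\B_A$ over $[c_i,d_i]$. Multiplying and using $\sum_i\|w_i\|=\|w\|$ yields $h^{-\|w\|}\widetilde J_w(t_0+h;t_0)=\Psi_w\bigl(\{\widehat\B_A\}_{A\in\Asto}\bigr)$ for one fixed measurable functional $\Psi_w$, the same for all $t_0$ and $h$. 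By Brownian self-similarity and stationarity of the increments, $\{\widehat\B_A\}_{A\in\Asto}$ is a family of independent standard Wiener processes on $[0,1]$ whose law is independent of $t_0$ and $h$, so the joint distribution of any finite subfamily of $\{h^{-\|w\|}\widetilde J_w(t_0+h;t_0)\}_{w\in\W}$ is independent of $t_0$ and $h$, giving the first property. The second follows by applying Proposition~\ref{prop:iteratedintegrals} to each factor $J_{w_i}(t_0+d_ih;t_0+c_ih)=J_{w_i}(s_0+k;s_0)$, $s_0=t_0+c_ih$, $k=(d_i-c_i)h$, so that each factor lies in every $L^p$, together with the generalized H\"older inequality for the finite products and the triangle inequality for the finite sum. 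The third is then immediate: $\|\widetilde J_w(t_0+h;t_0)\|_{L^p}=\|\Psi_w\|_{L^p}\,h^{\|w\|}$ with $\|\Psi_w\|_{L^p}<\infty$, which is $\mathcal{O}(h^{\|w\|})$.

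The one assertion needing a genuine idea is the last, $\E\widetilde J_w(t_0+h;t_0)=0$ whenever $\|w\|$ is not an integer. For this I would apply the reflection $\B_A\mapsto-\B_A$ simultaneously to all $A\in\Asto$: its effect on each factor is to multiply $J_{w_i}$ by $(-1)^{n_s(w_i)}$, where $n_s(w_i)$ is the number of stochastic letters of $w_i$ (the letters $\bar A$ are integrated against $ds$ and contribute no sign), so $\widetilde J_w$ is turned into $(-1)^{n_s(w)}\widetilde J_w$ with $n_s(w)=\sum_i n_s(w_i)$. Since $\{-\B_A\}$ and $\{\B_A\}$ have the same law, taking expectations gives $\E\widetilde J_w=(-1)^{n_s(w)}\E\widetilde J_w$, hence $\E\widetilde J_w=0$ as soon as $n_s(w)$ is odd; and $\|w\|=n_d(w)+\tfrac12 n_s(w)$ fails to be an integer precisely when $n_s(w)$ is odd. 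I expect the only real difficulty to be expository: arranging the factorization of $\widetilde J_w$ and the global rescaling so that the identity $\sum_i\|w_i\|=\|w\|$, which forces the homogeneity degree $h^{\|w\|}$, is manifest; once that is in place each of the four items is a short verification. The Ito statement is proved verbatim, with Proposition~\ref{prop:iteratedintegralsito} in place of Proposition~\ref{prop:iteratedintegrals}.
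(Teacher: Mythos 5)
Your proposal is correct, and it supplies all four items in both the Stratonovich and Ito cases: the convolution/deconcatenation formula expressing \(\widetilde J_w(t_0+h;t_0)\) as a finite sum of products of iterated integrals over the subintervals \([t_0+c_ih,t_0+d_ih]\), Brownian scaling \(s=t_0+h\sigma\) for the distributional invariance and the \(L^p\) bounds, and the global reflection \(\B_A\mapsto-\B_A\) for the vanishing of expectations at non-integer weight (with the letters \(\bar A\) correctly treated as sign-free in the Ito case). Note, however, that the paper itself gives no proof of this proposition — it defers to the appendix of \cite{alfonso} — so there is no in-text argument to compare against; your reconstruction is precisely the kind of argument that reference carries out, and it is consistent with the structural description of \(\widetilde J_w\) given just before the proposition.
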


By subtracting the expansions of the integrator and the true solution, we immediately obtain the next result. The bound
for \(\|\delta_w(t_0;h)\|_p\) follows from the third item in Proposition~\ref{prop:iteratedintegrals} and the corresponding result for \(\widetilde{J}_w(t_0+h;t_0)\) in Proposition~\ref{prop:iteratedintegralsnum}. Note that the halfinteger
values of \(\nu\) drop from \eqref{eq:mainweak} in view of the last item in Proposition~\ref{prop:iteratedintegrals} and the corresponding result for \(\widetilde{J}_w(t_0+h;t_0)\).
\begin{theo}\label{th:main}
For a splitting integrator for the Stratonovich system \eqref{eq:stratonovich}, the local error \(\psi_{t_0+h;t_0}(x_0)\) has a word series expansion
\begin{equation}\label{eq:mainstrong}
  W_{\delta(t_0;h)}(x_0) = \sum_{\nu\in\Nh,\nu\neq 0}\:\sum_{w\in\W,\|w\|=\nu}\delta_w(t_0;h)f_w(x_0)
\end{equation}
with coefficients
\[
\delta_w(t_0;h)=\widetilde{J}_w(t_0+h;t_0)-J_w(t_0+h;t_0),\qquad w\in\W.
\]
For each nonempty \(w\in\W\) and any \(L^p\) norm \(1\leq p < \infty\), uniformly in \(t_0\),
\[
\|\delta_w(t_0;h)\|_p=\mathcal{O}(h^{\|w\|}),\qquad h\downarrow 0.
\]

In addition, for each observable \(\chi\), conditional on \(x_0\), the error in expectation
\[\E\big(\psi_{t_0+h;t_0}(x_0)\big) - \E\big(\phi_{t_0+h;t_0}(x_0)\big)\]
 has the expansion
\begin{equation}\label{eq:mainweak}
\sum_{\nu\in\N,\nu\neq 0}\:\sum_{{w\in\W,\|w\|=\nu}}\E\big(\delta_w(t_0;h)\big) D_w\chi(x_0).
\end{equation}
\end{theo}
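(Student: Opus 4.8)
The statement is essentially a bookkeeping consequence of the machinery already in place, so the plan is to assemble it from the pieces rather than to do anything genuinely new. First I would recall that, by the discussion immediately preceding the theorem, the integrator mapping $\psi_{t_0+h;t_0}$ has the word series expansion $\W_{\widetilde J}(x_0)=\sum_{w\in\W}\widetilde J_w(t_0+h;t_0)f_w(x_0)$ and the true solution $\varphi_{t_0+h;t_0}$ has the expansion $\W_J(x_0)=\sum_{w\in\W}J_w(t_0+h;t_0)f_w(x_0)$ from \eqref{eq:taylornusol}. Since word series depend linearly on their defining formal series, the difference $\psi_{t_0+h;t_0}(x_0)-\varphi_{t_0+h;t_0}(x_0)$ is the word series $\W_{\widetilde J-J}(x_0)$, that is, $\sum_{w\in\W}\delta_w(t_0;h)f_w(x_0)$ with $\delta_w=\widetilde J_w-J_w$. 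The $w=\emptyset$ term vanishes because both $\widetilde J_\emptyset$ and $J_\emptyset$ equal $1$, which lets me restrict the outer sum to $\nu\neq 0$ and gives \eqref{eq:mainstrong}.

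Next I would establish the $L^p$ bound. For each fixed nonempty $w$, the triangle inequality gives $\|\delta_w(t_0;h)\|_p\leq\|\widetilde J_w(t_0+h;t_0)\|_p+\|J_w(t_0+h;t_0)\|_p$; the second term is $\mathcal{O}(h^{\|w\|})$ by the third item of Proposition~\ref{prop:iteratedintegrals}, and the first term is $\mathcal{O}(h^{\|w\|})$ by the corresponding statement for $\widetilde J_w$ in Proposition~\ref{prop:iteratedintegralsnum}. Uniformity in $t_0$ follows because the distributions of $h^{-\|w\|}J_w$ and $h^{-\|w\|}\widetilde J_w$ are $t_0$-independent (first item of the two propositions), so the $L^p$ norms do not depend on $t_0$ at all.

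For the weak (expectation) statement I would instead pull back through the operator series. Applying the expansion of the pullback operators to the observable $\chi$, one has $\E\big(\chi(\psi_{t_0+h;t_0}(x_0))\big)-\E\big(\chi(\varphi_{t_0+h;t_0}(x_0))\big)=\sum_{w\in\W}\E\big(\delta_w(t_0;h)\big)D_w\chi(x_0)$, using $D_{\widetilde J}$ and $D_J=D_I$-style expansions together with linearity of expectation (all sums being, after truncation at a given weight, finite). Finally, whenever $\|w\|$ is not an integer, the last item of Proposition~\ref{prop:iteratedintegrals} gives $\E J_w=0$ and the analogous property in Proposition~\ref{prop:iteratedintegralsnum} gives $\E\widetilde J_w=0$, so $\E\delta_w=0$; hence the half-integer values of $\nu$ drop and the sum collapses to $\nu\in\N$, $\nu\neq 0$ (the $\nu=0$ term again vanishing since $\E\delta_\emptyset=0$), yielding \eqref{eq:mainweak}. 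The only mild subtlety — and the closest thing to an obstacle — is making precise that the series manipulations are legitimate, i.e.\ that subtracting two formal word series coefficientwise and then taking expectations termwise is valid; this is handled exactly as everywhere else in the paper, by observing that for any prescribed $\nu_0$ only finitely many words contribute up to weight $\nu_0$, so every identity used is an identity of finite sums.
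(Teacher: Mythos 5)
Your proposal is correct and matches the paper's own argument: the paper likewise obtains \eqref{eq:mainstrong} by subtracting the two expansions, gets the \(L^p\) bound from the third item of Proposition~\ref{prop:iteratedintegrals} together with Proposition~\ref{prop:iteratedintegralsnum}, and drops the half-integer weights in \eqref{eq:mainweak} via the vanishing expectations for non-integer \(\|w\|\). Your explicit remarks about \(\delta_\emptyset=0\) and the finiteness of the sums at each weight are just slightly more detailed bookkeeping of the same route.
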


Of course to obtain good strong approximations, integrators with small error coefficients \(\delta_w(t_0;h)\)
are to be preferred, all other things being equal, to integrators with large error coefficients. A similar
comment applies to weak approximations. The reference \cite{alfonso} presents a comparison between two
splitting integrators of the Langevin dynamics introduced by Leimkuhler and Matthews \cite{Leim,Leim}. While
both schemes are closely related, it is found in \cite{Leim,Leim} that, in practice, one clearly outperforms
the other; this is explained in \cite{alfonso} by analysing the corresponding error coefficients.

In the Ito case we have the following result:
\begin{theo}
For a splitting integrator for the Ito system of differential equations \eqref{eq:ito},  the coefficients
\[
\eta_w(t_0;h)=\widetilde{I}_w(t_0+h;t_0)-I_w(t_0+h;t_0),
\]
satisfy, for each nonempty \(w\in\overline\W\) and any \(L^p\) norm \(1\leq p < \infty\), uniformly in \(t_0\),
\[
\|\eta_w(t_0;h)\|_p=\mathcal{O}(h^{\|w\|}),\qquad h\downarrow 0.
\]

The local error \(\psi_{t_0+h;t_0}(x_0)\) has a word series expansion
\begin{equation}\label{eq:mainstrongito}
  W_{\eta(t_0;h)}(x_0) = \sum_{\nu\in\Nh,\nu\neq 0}\:\sum_{w\in\overline{\W}_0,\|w\|=\nu}\eta_w(t_0;h)f_w(x_0)
\end{equation}

In addition, for each observable \(\chi\), conditional on \(x_0\),  the error in expectation
\[\E\big(\psi_{t_0+h;t_0}(x_0)\big) - \E\big(\phi_{t_0+h;t_0}(x_0)\big)\]
 has the expansion
\begin{equation}\label{eq:mainweakito}
\sum_{\nu\in\N,\nu\neq 0}\:\sum_{{w\in\overline{\W},\|w\|=\nu}}\E\big(\eta_w(t_0;h)\big)\, D_w\chi(x_0).
\end{equation}
\end{theo}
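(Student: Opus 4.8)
The plan is to obtain the Ito theorem as a direct transcription of the argument that yields Theorem~\ref{th:main} in the Stratonovich case, since the two theories have been set up in parallel throughout Sections 2--5. First I would recall that, by the discussion preceding this statement, the pullback of the splitting map \(\psi_{t_0+h;t_0}\) has the formal-series expansion \(D_{\widetilde I}=\sum_{w\in\overline\W}\widetilde I_w(t_0+h;t_0)D_w\), obtained by multiplying out the finitely many factors \eqref{eq:expansionpullbackito} (one per solution operator \(\varphi^{(i)}\)), and that each \(\widetilde I_w\) is either zero or a product of Ito iterated integrals over words whose concatenation is \(w\); in particular \(\widetilde I_\emptyset=1\) and each product involves words whose weights sum to \(\|w\|\). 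Applying \(D_{\widetilde I}\) to the identity map and discarding the word basis functions \(f_w\) that vanish identically (Proposition after \eqref{eq:recursionbasistwo}) gives the word-series form \(\W_{\widetilde I}(x_0)=\sum_{w\in\overline\W_0}\widetilde I_w(t_0+h;t_0)f_w(x_0)\). Subtracting the exact Taylor expansion \eqref{eq:taylornusolito} of \(\varphi_{t_0+h,t_0}(x_0)\), whose coefficients are the exact Ito iterated integrals \(I_w\), yields \eqref{eq:mainstrongito} with \(\eta_w=\widetilde I_w-I_w\); the \(\nu=0\) term cancels because \(\widetilde I_\emptyset=I_\emptyset=1\).

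For the \(L^p\) bound on \(\eta_w\), I would invoke Proposition~\ref{prop:iteratedintegralsnum}, which asserts that the \(\widetilde I_w\) share the properties listed in Proposition~\ref{prop:iteratedintegralsito}; in particular, for each nonempty \(w\) and each finite \(p\ge1\), both \(\|\widetilde I_w(t_0+h;t_0)\|_p\) and \(\|I_w(t_0+h;t_0)\|_p\) are \(\mathcal O(h^{\|w\|})\) uniformly in \(t_0\), so by the triangle inequality \(\|\eta_w(t_0;h)\|_p=\mathcal O(h^{\|w\|})\). For the weak expansion \eqref{eq:mainweakito}, I would apply the pullback operator \(D_{\widetilde I}-D_I=\sum_{w\in\overline\W}\eta_w D_w\) to the observable \(\chi\), evaluate at \(x_0\), and take conditional expectation given \(x_0\); since the \(D_w\chi(x_0)\) are deterministic once \(x_0\) is fixed, expectation passes onto the coefficients, giving \(\sum_{w\in\overline\W}\E(\eta_w(t_0;h))D_w\chi(x_0)\). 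The restriction of the outer sum to integer \(\nu\in\N\) follows from the last item of Proposition~\ref{prop:iteratedintegralsito} together with the corresponding property for the \(\widetilde I_w\) in Proposition~\ref{prop:iteratedintegralsnum}: when \(\|w\|\) is a half-integer, \(\E I_w=0\) and \(\E\widetilde I_w=0\), hence \(\E\eta_w=0\), so those terms drop out.

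The only genuinely non-routine point is the verification underpinning Proposition~\ref{prop:iteratedintegralsnum} for the Ito case, namely that a product of Ito iterated integrals \(I_{v_1}\cdots I_{v_r}\) with \(v_1\cdots v_r=w\) again has \(L^p\) norm \(\mathcal O(h^{\|w\|})\) and vanishing expectation when \(\|w\|\notin\N\); but this is exactly the content cited as "proved similarly" to the Stratonovich statement, so here I would simply reference \cite{alfonso} and note that the Ito iterated integrals satisfy the same scaling and moment identities (Proposition~\ref{prop:iteratedintegralsito}) that drive the Stratonovich proof. I expect the main obstacle, such as it is, to be purely bookkeeping: keeping track of the extended alphabet \(\eA\) (with the second-order operators \(D_{\bar A}\)) and the fact that the word-series sum runs over \(\overline\W_0\) rather than all of \(\overline\W\), whereas the operator-series and the weak expansion run over all of \(\overline\W\). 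Since the \(f_w\) that were suppressed vanish identically, this discrepancy is harmless in \eqref{eq:mainstrongito}, and no discrepancy arises in \eqref{eq:mainweakito} because there one keeps the full operator series. Thus the proof is a faithful mirror of the Stratonovich argument, with \(J\) replaced by \(I\), \(\W\) by \(\overline\W\), and Propositions~\ref{prop:iteratedintegrals}--\ref{prop:iteratedintegralsnum} used in their Ito incarnations.
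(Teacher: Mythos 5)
Your argument is correct and follows exactly the route the paper takes (implicitly, since the Ito theorem is stated as the direct analogue of Theorem~\ref{th:main}): subtract the expansion of \(D_{\widetilde I}\) from that of \(D_I\), get the \(L^p\) bound from Proposition~\ref{prop:iteratedintegralsito} together with the Ito part of Proposition~\ref{prop:iteratedintegralsnum}, and drop the half-integer weights in the weak expansion via the vanishing-expectation property. Your handling of the \(\overline{\W}_0\) versus \(\overline{\W}\) bookkeeping matches the paper's use of the proposition that \(f_w\equiv 0\) when the last letter of \(w\) is some \(\bar A\), so nothing further is needed.
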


\subsection{Stratonovich order conditions}

If \(\mu\in\Nh\), \(\mu>0 \),  we shall say that the integrator has \emph{strong order} \(\geq \mu\)
if the series  \eqref{eq:mainstrong}
only comprises  terms of weight \(\geq \mu+1/2\), i.e.\ of size
 \(\mathcal{O}(h^{\mu+1/2})\).
From Theorem~\ref{th:main} it is clear that for \(\mu\in\Nh\), \(\mu>0 \), the \emph{strong order conditions},
\begin{equation}\label{eq:strongordercond}
\widetilde{J}_w(t_0+h;t_0)=J_w(t_0+h;t_0), \qquad w\in\W,\:\|w\| = 1/2, 1, 3/2,\dots, \mu,
\end{equation}
are \emph{sufficient} to guarantee strong order \(\geq \mu\). Under suitable assumptions on
\eqref{eq:stratonovich}, it may be proved that when the order conditions hold the local error actually
possesses a \(\mathcal{O}(h^{\mu+1/2})\) bound in the \(L^p\) norms, \(p<\infty\). Here our interest lies in
the combinatorial aspects of the theory and will not be concerned with the derivation of such bounds; the
interested reader is referred to \cite{alfonso}.

Are the strong  order conditions \eqref{eq:strongordercond} \emph{necessary} as well as sufficient to achieve strong order \(\geq \mu\)? This question may be
discussed in two different scenarios:

\begin{itemize}
\item \emph{Specific system.} In this case we are only interested in \eqref{eq:stratonovich} for a fixed,
    specific choice of dimension \(d\) and vector fields \(f_\ell\) in \(\R^d\).
\item \emph{General system.} Here \(\A\) and the coefficients \(\widetilde J_w\) are fixed and one demands that the
series \eqref{eq:strongordercond} only comprises terms of weight \(\geq \mu+1/2\) for each choice of
 \(d\) and each choice of vector fields \(f_\ell\), \(\ell\in\A\), in \eqref{eq:stratonovich}.
\end{itemize}

While the general system scenario is not without mathematical interest, in practice it is   the specific system case that matters. This point, that would be true for any numerical integrator, is especially so for splitting algorithms: one of the main advantages of the splitting idea is its versatility to  be tailored to the specific problem at hand.

In the specific system scenario is  possible that for some words \(w\) the  word basis functions \(f_w\)
vanish at each \(x_0\). If that is the case, it is not necessary to impose the order conditions \(\delta_w=0\)
associated with such words. This is illustrated in \cite{alfonso} in the case of the Langevin dynamics, whose
structure implies that many \(f_w\) vanish.

    In  the general system  scenario the conditions
\eqref{eq:strongordercond} \emph{are necessary} for strong order \(\geq \mu\), in view of the second item
of the lemma below that show that the word basis functions are independent.

\begin{lemma}\label{lem:independence}Fix the alphabet \(\A\) and choose \(w\in \W\), \(w\neq\emptyset\). There exist a value of the
dimension \(d\), vector fields \(f_\ell\), \(\ell\in\A\), in \(\R^d\), and a scalar observable \(\chi\), which depend on \(\A\) and \(w\), such that,
\begin{itemize}
\item \(D_w\chi(0)=1\) and \(D_u\chi(0)=0\) for each nonempty   \(u\in\W\), \(u\neq w\).
\item The first component \(f^1_u(0)\) of the vector \(f_u(0)\in\R^d\) vanishes for each nonempty \(u\in\W\), \(u\neq w\), while \(f^1_w(0)=1\).
\end{itemize}
\end{lemma}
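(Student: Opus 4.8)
The plan is to build an explicit, minimal ``triangular'' model adapted to the prescribed word. Write $w=\ell_n\cdots\ell_1$, where $n\ge 1$ is the number of letters of $w$ (whether a letter is deterministic or stochastic is irrelevant here). I would work in $\R^d$ with $d=n$, use coordinates $x^1,\dots,x^n$, and pick the scalar observable $\chi(x)=x^1$. For every letter $\ell\in\A$ I would then define an affine (hence smooth) vector field $f_\ell$ by prescribing its components: put $(f_\ell)^i(x)=x^{i+1}$ whenever $1\le i\le n-1$ and $\ell_i=\ell$, put $(f_\ell)^n(x)=1$ whenever $\ell_n=\ell$, and set every other component to $0$. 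Thus $f_\ell\equiv 0$ if $\ell$ does not occur in $w$, and a letter occurring at several positions of $w$ just contributes several nonzero components to the same field, which will not cause trouble.

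The first bullet would follow from one elementary identity and a short induction. The identity is $D_m(x^i)=(f_m)^i$, which by construction equals $x^{i+1}$ when $i<n$ and $\ell_i=m$, equals the constant $1$ when $i=n$ and $\ell_n=m$, and vanishes otherwise; also $D_m$ annihilates constants. Plugging this repeatedly into $D_u\chi=D_{m_r}\cdots D_{m_1}(x^1)$ for a word $u=m_r\cdots m_1$ of length $r$, one proves by induction on $r$ that $D_u\chi$ equals $x^{r+1}$ if $r<n$ and $(m_1,\dots,m_r)=(\ell_1,\dots,\ell_r)$; equals the constant $1$ if $r=n$ and $(m_1,\dots,m_n)=(\ell_1,\dots,\ell_n)$, i.e.\ exactly when $u=w$; and is identically $0$ in all other cases (a single mismatch $m_j\ne\ell_j$ with $j\le n$ breaks the chain, and once the constant $1$ is produced any further operator kills it). Evaluating at the origin, the monomials $x^{r+1}$ vanish there, so $D_u\chi(0)=1$ precisely for $u=w$ and $D_u\chi(0)=0$ for every other nonempty $u$.

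The second bullet is then immediate. Since $f_u$ is, by definition, $D_u$ applied componentwise to the identity map $x\mapsto x$, its $i$-th component is the function $f_u^i=D_u(x^i)$; in particular $f_u^1=D_u(x^1)=D_u\chi$ for our choice of $\chi$. Hence $f_u^1(0)=D_u\chi(0)$, and ``$f_w^1(0)=1$ and $f_u^1(0)=0$ for every nonempty $u\ne w$'' is literally the conclusion of the first bullet.

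The one thing that needs care --- and the only place I would call an obstacle --- is choosing the vector fields so that the model is \emph{exact}, i.e.\ so that no word other than $w$ leaves a nonzero trace at the origin. The strictly triangular, nilpotent shape above (each $f_\ell$ only shifts coordinate $i+1$ into coordinate $i$ at the positions where $\ell$ occurs, with a constant cap at coordinate $n$) is precisely what forces this and is robust to repeated letters; once that shape is in place the remaining verification is the routine induction sketched above. An alternative with $d=n+1$ and a non-affine terminal coordinate is possible but offers no simplification.
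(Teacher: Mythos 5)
Your proposal is correct and is essentially the paper's own construction: the same dimension $d=n$, the same affine fields obtained by splitting the combined field $[x^2,x^3,\dots,x^d,1]^T$ among the letters according to their positions in $w$, and the observable $\chi(x)=x^1$. The only difference is presentational: you verify the first bullet directly by an induction computing $D_u\chi$ and then deduce the second, whereas the paper chases the nonzero rows in the Jacobian product $f'_{k_1}\cdots f'_{k_{r-1}}f_{k_r}(0)$ to establish the second bullet and then obtains the first by taking $\chi=x^1$ --- the same triangular mechanism either way.
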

\begin{proof} The first item follows from the second by choosing \(\chi\) to be the first coordinate mapping \(x\mapsto x^1\).

For the second item, the idea of the proof is best understood by means of an example. Suppose that \(w=\ell\ell m\ell m\) with \(\ell\neq m\). Then set \(d=5\),
\[f_\ell(x) = [0,x^3,0,x^5,1]^T,\qquad f_m(x) = [x^2,0,x^4,0,0]^T.
\]
(recall that superscripts denote components) and \(f_k(x)=0\) any remaining letters. Thus
\[
\sum_{k\in\A} f_k(x) = [x^2,x^3,x^4,x^5,1]^T.
\]
Because second and higher derivatives of the fields vanish, the recurrence \eqref{eq:recurrencebasis} shows that for any word \(u= k_n\dots k_1\)
\[
f_u(0) =  f^\prime_{k_1} \cdots  f^\prime_{k_{n-1}}f_{k_n}(0).
\]
 Assume that \(f_u^1(0)\neq 0\). The Jacobian matrix \( f_{k_1}^\prime\) must have a nonzero element in its first row and  this implies that \(k_1=m\). Then, by definition of \(f_m\), the first row is \([0,1,0,\dots,0]\), so that the second component of
 \[
 f^\prime_{k_2} \cdots  f^\prime_{k_{n-1}}f_{k_n}(0)
 \]
 must be nonzero. This implies that \(k_2=\ell\). By repeating this argument, we conclude that \(u=w\) and \(f^1_w(0)=1\).

 For a general word \(w\), things are as follows. The dimension \(d\) is taken equal to the number of letters in \(w\). The field
 \[
\sum_{k\in\A} f_k(x) = [x^2,x^3,\dots, x^d,1]^T
\]
is split in such a way that its \(d-j+1\), \(j=1,\dots, d\)  component is assigned to the field \(f_k\) if
\(k\) is the letter that occupies  the \(j\)-th position in \(w\).
(In this way there as many nonzero vector fields \(f_k\) as distinct letters in \(w\).)
\end{proof}

For \(\sigma \in\N\), \(\sigma >0\), the \emph{weak order conditions}
\begin{equation}\label{eq:weakordercond}
\E\big(\widetilde{J}_w(t_0+h;t_0)\big)=\E\big(J_w(t_0+h;t_0)\big), \qquad w\in\W,\:\|w\| =  1,2,\dots, \sigma,
\end{equation}
are \emph{sufficient} to ensure that the series in \eqref{eq:mainweak} only comprises terms of weight \(\geq \sigma+1\), or, as we shall say, the integrator has \emph{weak order} \(\geq \sigma\). In a general system scenario the weak order conditions are also \emph{necessary} in view of the first item in the preceding lemma.

\subsection{Ito order conditions}
For the Ito case the strong and week order conditions are
\begin{equation}\label{eq:strongordercondito}
\widetilde{I}_w(t_0+h;t_0)=I_w(t_0+h;t_0), \qquad w\in\overline{\W}_0,\:\|w\| =  1/2, 1,3/2,\dots, \mu,
\end{equation}
and
\begin{equation}\label{eq:weakordercondito}
\E\big(\widetilde{I}_w(t_0+h;t_0)\big)=\E\big(I_w(t_0+h;t_0)\big), \qquad w\in\overline \W,\quad\|w\| =  1,2,\dots,
\sigma,
\end{equation}
respectively. They guarantee that the series in \eqref{eq:mainstrongito} (respectively \eqref{eq:mainweakito}) only consists of terms of weight \(\geq\mu\) (respectively \(\geq \sigma\)), or, as we shall say,  the integrator has
\emph{strong order} \(\geq \mu\) (respectively \emph{weak order}  \(\geq \sigma\)).

It is possible to show (but the very long proof will not be reproduced here) that, in the general system scenario and if
\(\mu=1/2, 1, 3/2\), the conditions \eqref{eq:strongordercondito} are necessary to achieve strong order \(\geq \mu\). Similarly it may be proved that
\eqref{eq:weakordercondito} are necessary to have weak order \(\geq \sigma\) for general systems if \(\sigma = 1, 2, 3\). These particular values of \(\mu\) and \(\sigma\) are sufficient for establish the order barrier in Theorem~\ref{th:barrier} below.  We believe the strong are weak order conditions are  necessary for arbitrary \(\mu\) or \(\sigma\) but a proof is not yet available.

\subsection{Extensions}

In \eqref{eq:stratonovich} or \eqref{eq:ito} is assumed that all vector fields \(f_a\) and \(f_A\) are equally
important. In several applications this may not be the case. For instance, consider the system
\[
dx = f_a(x)\,dt+\epsilon f_b(x)\,dt+f_A(x)\circ d\B_A, \qquad 0<\epsilon \ll 1,
\]
or its Ito counterpart, where the split systems \(\{a,A\}\), \(\{b\}\) may be solved in closed form. Thus we
are dealing with small perturbation of an integrable system and it makes sense, when expanding the local
error, to track not only powers of \(h\) but also powers of \(\epsilon\), as in done in e.g.\ \cite{maka} in
the deterministic scenario. That task is easily accomplished with the tools presented so far. Details will not
be given.
\section{The shuffle and quasishuffle products}

The  conditions in \eqref{eq:strongordercond} are \emph{not independent}; for instance the  order condition
corresponding to the two-letter word \(\ell\ell\) is fulfilled whenever the order condition for \(\ell\) is
fulfilled. (Note that the dependence between order conditions and the necessity of the order conditions
discussed above are completely different issues.) Similarly there are dependencies within each of the set of
conditions \eqref{eq:weakordercond}, \eqref{eq:strongordercondito} and \eqref{eq:weakordercondito}. The study
of this issue requires the help of the shuffle and quasishuffle products. More generally, these products play
a key role when working in many developments involving  elements of \(\R\langle\langle \A\rangle\rangle\) or
\(\R\langle\langle \eA\rangle\rangle\) \cite{reut}. In the deterministic case the shuffle relations between
iterated integrated were first noted by Ree \cite{ree}. The stochastic scenario was addressed by Gaines
\cite{gaines}. On the other hand there is much literature relating the shuffle and quasishuffle products to
stochastic integration, see e.g.\ \cite{kur}.

 We begin with the Stratonovich/shuffle case.
The more complicated Ito/quasi\-shuffle case is presented later.

\subsection{The shuffle product}
To motivate the introduction of the shuffle product,
we begin by noting that if \(\varphi:\R^d\rightarrow\R^d\) is any mapping and \(\Phi\) the associated pullback operator, then, for any pair of
scalar-valued observables \(\chi_1,\chi_2\),
\[
\Phi(\chi_1\cdot\chi_2) = (\Phi\chi_1)\cdot(\Phi\chi_2),
\]
where \(\cdot\) denotes the standard (pointwise) product of observables,
i.e.\ \((\chi_1\cdot\chi_2)(x) = \chi_1(x)\chi_2(x)\) for \(x\in\R^d\). In other words \(\Phi\) is \emph{multiplicative}. The series of differential operators \(D_J\) and \(D_{\widetilde J}\) that expand the pullback operators associated with \(\varphi_{t_0+h;t_0}\) and \(\psi_{t_0+h;t_0}\)
are similarly multiplicative.
Now, it is easily checked that if \(S\in\R\langle\langle \A\rangle\rangle\), then, in general
\[
D_S(\chi_1\cdot\chi_2) \neq (D_S\chi_1)\cdot(D_S\chi_2)
\]
(for instance, if \(S=\ell\in\A\), then \(D_S(\chi_1\cdot\chi_2)= (D_S\chi_1)\cdot\chi_2+\chi_1\cdot(D_S\chi_2)\)). Therefore
the coefficients \(J_w\) and \(J_{\widetilde w}\) of the series \(D_J\) and \(D_{\widetilde J}\) must have some special property that tells them
 apart form \lq\lq general\rq\rq\ coefficients; as we shall see,  that property  explains
  the dependence between the order conditions.

In order to identify when a series \(D_S\) is multiplicative, we first investigate the action of a differential operator
\(D_w\), \(w\in\W\), on a product \(\chi_1\cdot\chi_2\). For instance, for \(k,\ell,m\in\A\), a trivial computation leads to \begin{eqnarray*}
D_{k\ell m}(\chi_1\cdot\chi_2)&=& (D_{k\ell m}\chi_1)\cdot(D_\emptyset \chi_2)
+(D_{k\ell}\chi_1)\cdot(D_m \chi_2)\\
&&
+ (D_{k m}\chi_1)\cdot (D_\ell \chi_2)
+ (D_{\ell m}\chi_1)\cdot(D_k \chi_2)\\
&&
+(D_k\chi_1)\cdot (D_{\ell m} \chi_2)
+(D_{\ell}\chi_1)\cdot (D_{km} \chi_2)\\
&&
+ (D_{m}\chi_1)\cdot (D_{k\ell} \chi_2)
+(D_\emptyset \chi_1)\cdot (D_{k\ell m}\chi_1).
\end{eqnarray*}
The right-hand side contains eight pairs of words \((k\ell m,\emptyset)\), \((k\ell,m)\), \dots\ What do these
pairs have in common? They are precisely the pairs such that when \emph{shuffled} give rise to the word
\(k\ell m\) in the left-hand side. By definition, the \emph{shuffle product} \(u\sh v\) of two words with
\(m\) and \(n\) letters is the sum of the \((m+n)!/(m!n!)\) words that may be formed by interleaving the
letters of \(u\) with those of \(v\) while keeping the letters in the same order as they appear in \(u\) and
\(v\). For instance \(k\ell\sh m= k\ell m+km\ell+\ell k m\), \( \ell\sh\ell = \ell+\ell = 2\ell\), etc. More
formally, the shuffle product of words may defined recursively by the relations \cite[Section 1.4]{reut}
\[
\emptyset \sh \emptyset = \emptyset,\qquad \emptyset \sh \ell = \ell \sh \emptyset = \ell,\quad \ell\in\A,
\]
and
\begin{equation}\label{eq:shufflerecurrence}
u\ell\sh vm =(u\ell \sh v)m+(u\sh vm)\ell,\qquad u,v\in\W,\quad \ell,m\in\A.
\end{equation}
The last equality corresponds to the fact that the words arising from shuffling \(u\ell\) and \(vm\) necessarily end with either the last letter of \(u\ell\) or the last letter of \(vm\).
Note that for words \(u,v\in\W\), the shuffle \(u\sh v\) is in general not a word but an element of the space \(\R\langle \A\rangle\) of linear combination of words. By linearity, the shuffle product may be trivially extended to a commutative, associative product in
\(\R\langle \A\rangle\); for instance \( (3k+\ell) \sh (\ell-m) = 3k\ell+3\ell k -3km-3mk+2 \ell \ell-\ell m-m\ell \).

At this stage we introduce some additional notation that will be used frequently below. If
\(S\in\R\langle\langle \A\rangle\rangle\) is a series and \(p=\sum_w p_ww\in\R\langle \A\rangle\), we set
\begin{equation}\label{eq:bilinear}
(S,p) = \sum_w S_w p_w;
\end{equation}
the sum is well defined because only a finite number of coefficients \(p_w\) are \(\neq 0\). In the case where
\(p\) coincides with a word \(w\), \((S,w)\) is just the coefficient \(S_w\); for general \(p\), \((S,w)\) is
a linear combination of coefficients \(S_w\). Obviously \((\cdot,\cdot)\) is a real-valued bilinear map. With
this notation, we may present the following result (that generalizes the formula for \(D_{k\ell
m}(\chi_1\cdot\chi_2)\) above).
\begin{prop}\label{prop:shuffle}For any \(S\in\R\langle\langle \A\rangle\rangle\) and any pair of observables
\[
D_S(\chi_1\cdot\chi_2) =  \sum_{u,v\in\W} (S,u\sh v)\: D_u\chi_1\cdot D_v\chi_2.
\]
\end{prop}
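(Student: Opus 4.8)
The plan is to prove the identity by induction on the length of a word \(w\in\W\), establishing first the special case \(S=w\) and then extending to general \(S\) by linearity and continuity of the convolution/bilinear pairing. Concretely, I would prove by induction on \(n=|w|\) that for every word \(w\in\W\) and every pair of observables \(\chi_1,\chi_2\),
\[
D_w(\chi_1\cdot\chi_2) = \sum_{u,v\in\W}(w,u\sh v)\: D_u\chi_1\cdot D_v\chi_2,
\]
noting that the right-hand sum is finite because \((w,u\sh v)=0\) unless \(|u|+|v|=|w|\). The base cases \(n=0\) (where \(D_\emptyset(\chi_1\cdot\chi_2)=\chi_1\cdot\chi_2\) and the only contributing pair is \((\emptyset,\emptyset)\)) and \(n=1\) (the Leibniz rule \(D_\ell(\chi_1\cdot\chi_2)=(D_\ell\chi_1)\cdot\chi_2+\chi_1\cdot(D_\ell\chi_2)\), matching the pairs \((\ell,\emptyset)\) and \((\emptyset,\ell)\)) are immediate from the definition of \(D_\ell\) as a first-order operator.

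For the inductive step, write \(w=k w'\) with \(k\in\A\) and \(|w'|=n-1\), so that \(D_w = D_k D_{w'}\). Applying the inductive hypothesis to \(w'\) gives \(D_{w'}(\chi_1\cdot\chi_2)=\sum_{u',v'}(w',u'\sh v')\,D_{u'}\chi_1\cdot D_{v'}\chi_2\), and then applying the first-order operator \(D_k\) to each product term via the Leibniz rule yields
\[
D_w(\chi_1\cdot\chi_2) = \sum_{u',v'}(w',u'\sh v')\Big( D_{ku'}\chi_1\cdot D_{v'}\chi_2 + D_{u'}\chi_1\cdot D_{kv'}\chi_2\Big).
\]
The task is then to recognize the resulting coefficient of a generic term \(D_u\chi_1\cdot D_v\chi_2\) as \((w,u\sh v)\). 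A word \(u\) arises in the first group only if \(u=ku'\) for some \(u'\), contributing \((w',u'\sh v)\); it arises in the second group only if \(v=kv'\) for some \(v'\), contributing \((w',u\sh v')\). So I must verify the combinatorial identity
\[
(kw',\,u\sh v) = (w',\,u'\sh v)\,[u=ku'] + (w',\,u\sh v')\,[v=kv'],
\]
which is precisely a re-reading — with the roles of "first letter" in place of "last letter" — of the shuffle recursion. Here I would be careful about which end the recursion \eqref{eq:shufflerecurrence} acts on: \eqref{eq:shufflerecurrence} peels off the \emph{last} letter, whereas the decomposition \(D_w=D_kD_{w'}\) peels off the \emph{first} letter of \(w\) (the leftmost, which is the outermost operator). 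The clean fix is to use the mirror-image version of the shuffle recursion, \(ku'\sh k'v' = k(u'\sh k'v')+k'(ku'\sh v')\), which follows from \eqref{eq:shufflerecurrence} by the symmetry of shuffle under reversal of all words, or simply to prove the left-peeling recursion directly from the definition of shuffle as interleavings. Pairing this against \(w'\) via the bilinear form \((\cdot,\cdot)\) gives exactly the displayed identity, completing the induction.

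Finally, to pass from words to an arbitrary \(S=\sum_w S_w w\in\R\langle\langle\A\rangle\rangle\), I would use linearity of \(D_S\) in \(S\) and linearity of \((\cdot,\cdot)\) in its first slot: \(D_S(\chi_1\cdot\chi_2)=\sum_w S_w\,D_w(\chi_1\cdot\chi_2)=\sum_w S_w\sum_{u,v}(w,u\sh v)\,D_u\chi_1\cdot D_v\chi_2\), and since for fixed \(u,v\) only the \(w\) with \(|w|=|u|+|v|\) contribute, interchanging the (locally finite) sums gives \(\sum_{u,v}(S,u\sh v)\,D_u\chi_1\cdot D_v\chi_2\), as claimed. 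The main obstacle is purely bookkeeping: getting the shuffle recursion to act on the correct end of the words so that it matches the Leibniz-rule decomposition of \(D_w=D_kD_{w'}\), and keeping track of the indicator conditions \([u=ku']\), \([v=kv']\) when collecting coefficients. There is no analytic difficulty since all sums in sight are finite once the weight/length is fixed.
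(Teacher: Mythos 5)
Your proof is correct and follows essentially the same route as the paper: reduce to the case where \(S\) is a single word, induct on the word length using the Leibniz rule for the first-order operators \(D_\ell\) together with the shuffle recursion, and pass to general \(S\) by linearity of \(D_S\) and of the pairing \((\cdot,\cdot)\). The only difference is orientation: the paper grows the word on the right (\(w\mapsto w\ell\)), so the Leibniz rule is applied innermost to \(\chi_1\cdot\chi_2\) and \eqref{eq:shufflerecurrence} applies verbatim, whereas you peel the leftmost letter and therefore need the mirror (left-peeling) form of the recursion, which you correctly identify and justify from the interleaving definition.
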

\begin{proof} It is sufficient to prove the case where \(S\) coincides with a word. The proof is by induction on the length (number of letters) of the word (not to be confused with its weight). When \(S\) is the empty word the result is trivial because, necessarily, in the right-hand side \((S,u\sh v)\) vanishes except if \(u=v=\emptyset\) when \((S,u\sh v)=1\). We assume that the result is true for the word \(w\) and prove it for the longer word \( w\ell\). Since \(D_\ell\) is a first-order differential operator we may write
\[
D_{w\ell}  (\chi_1\cdot\chi_2) =D_wD_\ell (\chi_1\cdot\chi_2)=  D_w (D_\ell \chi_1\cdot \chi_2+\chi_1\cdot D_\ell \chi_2),
\]
so that, by the induction hypothesis,
\[
D_{ w\ell }  (\chi_1\cdot\chi_2) =  \sum_{u,v\in\W} (w,u\sh v)\: \Big(D_{u\ell}\chi_1 \cdot D_v \chi_2 +
D_{u}\chi_1 \cdot D_{v\ell} \chi_2  \Big).
\]
Now from the definition of shuffle \((w,u\sh v)= (w\ell,u\ell\sh v) = (w\ell, u\sh v\ell)\) and therefore
\[
D_{ w\ell }  (\chi_1\cdot\chi_2) =  \sum_{u,v\in\W} (w\ell,u\ell\sh v)\: D_{u\ell}\chi_1 \cdot D_v \chi_2 + \sum_{u,v\in\W}
(w\ell,u\sh v\ell)
D_{u}\chi_1 \cdot D_{v\ell} \chi_2.
\]
The proof concludes by observing that, when \((w\ell,u^\prime \sh v^\prime)\) is \(\neq 0\), i.e.\ when \(w\ell\) is one of the words arising when shuffling \(u^\prime\) and \(v^\prime\),
the last letter in \(w\ell\) must be either  the last letter of \(u^\prime\) or the last letter of \(v^\prime\), so that either \(u^\prime=u\ell\) or \(v^\prime=v\ell\).
\end{proof}

Since, clearly
\[
(D_S\chi_1)\cdot(D_S\chi_2) = \sum_{u,v\in\W} (S,u)(S,v)\: D_u\chi_1\cdot D_v\chi_2,
\]
we may write
\begin{equation}\label{eq:23f}
D_{w\ell}  (\chi_1\cdot\chi_2) - (D_S\chi_1)\cdot(D_S\chi_2) = \sum_{u,v\in\W} \Big((S,u\sh v)-(S,u)(S,v)\Big)\: D_u\chi_1\cdot D_v\chi_2.
\end{equation}
This leads trivially to next result:

\begin{prop}\label{prop:multiplicative}Consider a series \(S\in\R\langle\langle \A\rangle\rangle\), \(S\neq 0\). The series of operators \(D_S\) is multiplicative
if  \(S_\emptyset=1\) and for each pair of words \(u,v\in\W\), the so-called shuffle relation
\[
(S,u\sh v) = (S,u)(S,v)
\]
holds.
\end{prop}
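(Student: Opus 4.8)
The plan is to derive this immediately from equation \eqref{eq:23f}, which already expresses the defect of multiplicativity as a single sum over pairs of words. First I would unwind the definition: $D_S$ being multiplicative means $D_S(\chi_1\cdot\chi_2) = (D_S\chi_1)\cdot(D_S\chi_2)$ for every pair of observables. By Proposition~\ref{prop:shuffle} the left-hand side equals $\sum_{u,v\in\W}(S,u\sh v)\,D_u\chi_1\cdot D_v\chi_2$, and the right-hand side equals $\sum_{u,v\in\W}(S,u)(S,v)\,D_u\chi_1\cdot D_v\chi_2$, so the difference is exactly the right-hand side of \eqref{eq:23f}. Hence if $S_\emptyset=1$ and all shuffle relations $(S,u\sh v)=(S,u)(S,v)$ hold, every coefficient in that sum vanishes termwise and multiplicativity follows. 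The role of the hypothesis $S_\emptyset=1$ is only to pin down the normalization: the shuffle relation with $u=v=\emptyset$ gives $S_\emptyset=S_\emptyset^2$, so $S_\emptyset\in\{0,1\}$, and since $S\neq 0$ one could in principle have $S_\emptyset=0$; requiring $S_\emptyset=1$ rules this degenerate case out and makes $D_S$ act as the identity on constants, consistent with a genuine pullback. I would remark that, conversely, if $D_S$ is multiplicative then testing on coordinate-type observables together with Lemma~\ref{lem:independence} forces the shuffle relations, so the condition is in fact also necessary; but since the statement only claims sufficiency, this direction can be left as a parenthetical comment.

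There is essentially no obstacle here: the substantive work has been done in Proposition~\ref{prop:shuffle}, and this proposition is the two-line corollary obtained by subtracting the two displayed expansions and invoking linear independence of the functionals $\chi_1,\chi_2\mapsto D_u\chi_1\cdot D_v\chi_2$ — or, more simply, just observing that vanishing of each coefficient makes the two sides of \eqref{eq:23f} identically zero, with no need to argue independence at all. The only point deserving a word of care is that \eqref{eq:23f} is stated with $D_{w\ell}$ on the left (an apparent typo for $D_S$), so in writing the proof I would silently use the correct identity $D_S(\chi_1\cdot\chi_2)-(D_S\chi_1)\cdot(D_S\chi_2)=\sum_{u,v\in\W}\big((S,u\sh v)-(S,u)(S,v)\big)D_u\chi_1\cdot D_v\chi_2$, which is what the preceding two displays actually establish. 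Thus the proof is:

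\begin{proof}
From Proposition~\ref{prop:shuffle} and the identity
\[
(D_S\chi_1)\cdot(D_S\chi_2) = \sum_{u,v\in\W} (S,u)(S,v)\: D_u\chi_1\cdot D_v\chi_2
\]
we obtain, for any pair of observables $\chi_1,\chi_2$,
\[
D_S(\chi_1\cdot\chi_2) - (D_S\chi_1)\cdot(D_S\chi_2) = \sum_{u,v\in\W} \Big((S,u\sh v)-(S,u)(S,v)\Big)\: D_u\chi_1\cdot D_v\chi_2.
\]
If $S_\emptyset=1$ and $(S,u\sh v) = (S,u)(S,v)$ for every pair $u,v\in\W$, then every coefficient in the sum on the right vanishes, so $D_S(\chi_1\cdot\chi_2) = (D_S\chi_1)\cdot(D_S\chi_2)$; that is, $D_S$ is multiplicative.
\end{proof}

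If desired one can add a sentence noting that the hypothesis $S_\emptyset=1$ is not implied by the shuffle relations alone, since these only force $S_\emptyset\in\{0,1\}$, and that the converse implication holds by choosing $\chi_1,\chi_2$ as in Lemma~\ref{lem:independence}; I would keep the main proof to the three lines above and relegate any such discussion to the surrounding text.
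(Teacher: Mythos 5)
Your proof is correct and is essentially identical to the paper's own argument, which likewise obtains the result ``trivially'' by subtracting the expansion of $(D_S\chi_1)\cdot(D_S\chi_2)$ from the one in Proposition~\ref{prop:shuffle} (and you are right that the left-hand side of \eqref{eq:23f} should read $D_S(\chi_1\cdot\chi_2)$). One marginal quibble with your side remark: the shuffle relations together with $S\neq 0$ do in fact force $S_\emptyset=1$, since taking $v=\emptyset$ gives $(S,u)=(S,u)S_\emptyset$ for all $u$, so $S_\emptyset=0$ would make $S$ vanish identically; this does not affect the proof itself.
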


Thus the shuffle relations are equations that link the different coefficients \(S_w\), \(w\in\W\). For
instance, from the shuffle \(\ell\sh\ell= 2\ell\ell\), \(\ell\in\A\), we have the shuffle relation \(S_\ell^2
= 2S_{\ell\ell}\) and, from the shuffle \(k\sh \ell= k \ell+ \ell k \), \(S_kS_\ell = S_{k\ell}+S_{\ell k}\).

Proposition~\ref{prop:multiplicative} in tandem with the following result give a new proof of  the multiplicativity of \(D_J\) that we pointed out above.

\begin{prop}\label{prop:Jmultiplicative} The Stratonovich  iterated integrals \(J_w(t_0+h;t_0)\) satisfy the shuffle relations.
\end{prop}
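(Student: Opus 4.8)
The plan is to prove the shuffle relations $J_{u\sh v}(t_0+h;t_0)=J_u(t_0+h;t_0)\,J_v(t_0+h;t_0)$ by induction on the total number of letters in $u$ and $v$. The base case is when at least one of the two words is empty: then $u\sh v$ is the nonempty word (or $\emptyset$), $J_\emptyset=1$, and the claimed identity is trivial. For the inductive step I would write $u=u'\ell$ and $v=v'm$ with $\ell,m\in\A$, and use the recursive definition \eqref{eq:shufflerecurrence} of the shuffle, namely $u'\ell\sh v'm=(u'\ell\sh v')m+(u'\sh v'm)\ell$, together with the integral recursion \eqref{eq:recurrenceintegrals} for iterated integrals.

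The key identity that drives the whole argument is the integration-by-parts (product) rule for Stratonovich integrals, which holds because Stratonovich calculus obeys the ordinary Leibniz rule: for two continuous semimartingales $X,Y$ one has $X(t)Y(t)-X(t_0)Y(t_0)=\int_{t_0}^t X\circ dY+\int_{t_0}^t Y\circ dX$. Applying this with $X(s)=J_{u'\ell}(s;t_0)$ and $Y(s)=J_{v'm}(s;t_0)$, and noting that by \eqref{eq:recurrenceintegrals} these satisfy $dX=J_{u'}(s;t_0)\circ d\B_\ell(s)$ and $dY=J_{v'}(s;t_0)\circ d\B_m(s)$ (here I am reading the words so that $\ell$ is the outermost integration variable; a careful check of the index conventions in \eqref{eq:iteratedintegral}--\eqref{eq:recurrenceintegrals} is needed to get this right), I obtain
\[
J_{u'\ell}(t_0+h;t_0)\,J_{v'm}(t_0+h;t_0)=\int_{t_0}^{t_0+h} J_{u'\ell}(s;t_0)J_{v'}(s;t_0)\circ d\B_m(s)+\int_{t_0}^{t_0+h} J_{v'm}(s;t_0)J_{u'}(s;t_0)\circ d\B_\ell(s).
\]
Now I apply the induction hypothesis \emph{inside the integrals}, using that $J_{u'\ell}(s;t_0)J_{v'}(s;t_0)=J_{u'\ell\sh v'}(s;t_0)$ and $J_{v'm}(s;t_0)J_{u'}(s;t_0)=J_{u'\sh v'm}(s;t_0)$ (here $J_p$ for a linear combination $p$ means the corresponding linear combination of iterated integrals, extending $J$ linearly). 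Both pairs involve strictly fewer letters than the pair $(u'\ell,v'm)$, so the hypothesis applies; and since these identities hold for the running upper limit $s$, not just for $t_0+h$, they may legitimately be substituted under the integral sign. Finally, by the linear extension of \eqref{eq:recurrenceintegrals}, $\int_{t_0}^{t_0+h} J_{u'\ell\sh v'}(s;t_0)\circ d\B_m(s)=J_{(u'\ell\sh v')m}(t_0+h;t_0)$ and similarly for the other term, so the right-hand side equals $J_{(u'\ell\sh v')m+(u'\sh v'm)\ell}(t_0+h;t_0)=J_{u'\ell\sh v'm}(t_0+h;t_0)$ by \eqref{eq:shufflerecurrence}, completing the induction.

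The main obstacle is bookkeeping rather than conceptual: one must be scrupulous about the ordering conventions in the definition \eqref{eq:iteratedintegral} of $J_{\ell_n\dots\ell_1}$ (note the reversed indexing, with $\ell_1$ the outermost and $\ell_n$ the innermost integration), so that \eqref{eq:recurrenceintegrals} is applied on the correct end of the word and the two terms of \eqref{eq:shufflerecurrence} match up with the two terms of the Leibniz rule. One also needs the elementary fact that the linear extension $p\mapsto J_p$ is compatible both with concatenation-by-a-single-letter-on-the-appropriate-side (via \eqref{eq:recurrenceintegrals}) and with the shuffle recursion; once the conventions are pinned down this is routine. A secondary point worth a remark is that the product rule for Stratonovich integrals is exactly what fails in the Ito case — there the Ito correction term appears — which is precisely why the Ito iterated integrals obey the \emph{quasishuffle} relations instead, treated later in the paper.
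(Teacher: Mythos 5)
Your proof is correct and follows essentially the same route as the paper's: the paper's (sketched) argument also rests on the Stratonovich integration-by-parts identity \eqref{eq:integrationbyparts} together with an induction driven by the shuffle recursion \eqref{eq:shufflerecurrence} and the iterated-integral recursion \eqref{eq:recurrenceintegrals}. You have merely spelled out the inductive step that the paper leaves implicit, and your handling of the right-to-left indexing convention is consistent with \eqref{eq:iteratedintegral}--\eqref{eq:recurrenceintegrals}.
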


\begin{proof}  For the shuffling of two  letters \(\ell, m\in \A\), the integration by
parts formula
\begin{eqnarray}\nonumber
&&\big(\B_\ell(t_0+h)-\B_\ell(t_0)\big)
\big(\B_m(t_0+h)-\B_m(t_0)\big)= \\
\nonumber &&\qquad \qquad\qquad \int_{t_0}^{t_0+h} \big(\B_\ell(t_0+s)-\B_\ell(t_0)\big)\circ d\B_m(s)\\
&&\qquad\qquad\qquad+\int_{t_0}^{t_0+h} \big(\B_m(t_0+s)-\B_m(t_0)\big)\circ d\B_\ell(s),
\label{eq:integrationbyparts}
\end{eqnarray}
is a statement of the shuffle relation \(J_\ell J_m = J_{m\ell}+J_{\ell m}\) (recall that if \(\ell\) or \(m\) are not stochastic, then
\(\B_\ell(t) = t\) or \(\B_m(t) = t\) respectively). General shuffles are dealt with by induction based on the
recursive definition of the shuffle product in \eqref{eq:shufflerecurrence} and the recursion
\eqref{eq:recurrenceintegrals} for the iterated integrals.
\end{proof}

To present a similar result for the integrator we need a lemma:
\begin{lemma}\label{lem:shuffle}Let \(S,T\in\R\langle\langle\A\rangle\rangle\), with \(S_\emptyset = T_\emptyset =1\), satisfy the
shuffle relations. Then the product \(ST\) has \((ST)_\emptyset = 1\) and satisfies the shuffle relations.
\end{lemma}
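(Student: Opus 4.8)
The statement to be proved is Lemma~\ref{lem:shuffle}: if $S,T\in\R\langle\langle\A\rangle\rangle$ both have constant term $1$ and both satisfy the shuffle relations, then so does their concatenation product $ST$. The plan is to reduce everything to a clean statement about the bilinear pairing $(\cdot,\cdot)$ introduced in \eqref{eq:bilinear} and the interplay between concatenation and shuffle. First I would dispose of the trivial point: $(ST)_\emptyset = S_\emptyset T_\emptyset = 1$ by the convolution formula, since the only way to write $\emptyset = vw$ is $v=w=\emptyset$. The substance is to show that for every pair of words $u,v\in\W$,
\[
(ST,\, u\sh v) = (ST,u)\,(ST,v).
\]

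\textbf{Key combinatorial identity.} The engine of the proof is the compatibility of the two products, which in Hopf-algebraic language says that concatenation is an algebra morphism for the deconcatenation coproduct dual to the shuffle. Concretely, I would establish (or invoke, since it is the standard defining property relating shuffle and concatenation, \cite{reut}) the identity
\[
(ST,\, u\sh v) \;=\; \sum (S,\, u_{(1)}\sh v_{(1)})\,(T,\, u_{(2)}\sh v_{(2)}),
\]
where the sum runs over all ways of splitting $u = u_{(1)}u_{(2)}$ and $v = v_{(1)}v_{(2)}$ into (possibly empty) prefixes and suffixes. This follows by expanding $(ST)_w$ via the convolution \eqref{eq:convolution} for each word $w$ appearing in $u\sh v$ and reorganizing: a word $w$ in the shuffle $u\sh v$, together with a cut point making $w = w_{(1)}w_{(2)}$, corresponds bijectively to a choice of prefixes $u_{(1)}, v_{(1)}$ whose shuffle contains $w_{(1)}$ and suffixes $u_{(2)}, v_{(2)}$ whose shuffle contains $w_{(2)}$. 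I would prove this by induction on the total length $|u|+|v|$, peeling off the last letter of $u$ or of $v$ and using the recursive definition \eqref{eq:shufflerecurrence} together with \eqref{eq:convolution}; the base case $u=v=\emptyset$ is immediate.

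\textbf{Assembling the result.} Once the key identity is in hand, the conclusion is short. Using that $S$ and $T$ satisfy the shuffle relations, each summand factors as
\[
(S,u_{(1)}\sh v_{(1)})(T,u_{(2)}\sh v_{(2)}) = (S,u_{(1)})(S,v_{(1)})(T,u_{(2)})(T,v_{(2)}),
\]
so
\[
(ST, u\sh v) = \Big(\sum_{u=u_{(1)}u_{(2)}} (S,u_{(1)})(T,u_{(2)})\Big)\Big(\sum_{v=v_{(1)}v_{(2)}} (S,v_{(1)})(T,v_{(2)})\Big) = (ST,u)(ST,v),
\]
where the last equality is again just the convolution formula \eqref{eq:convolution}, i.e.\ $(ST,w) = \sum_{w=w_{(1)}w_{(2)}}(S,w_{(1)})(T,w_{(2)})$.

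\textbf{Main obstacle.} The one place requiring care is the key combinatorial identity: one must make sure the bookkeeping of cut points versus prefix/suffix choices is exactly a bijection, with no overcounting and no missing terms, and that empty prefixes/suffixes are handled correctly (this is why the hypothesis $S_\emptyset=T_\emptyset=1$ is needed — it makes the degenerate splits behave). Everything else is a routine rearrangement of finite sums. An alternative, perhaps cleaner, route avoiding the explicit identity is to argue at the level of operators: by Proposition~\ref{prop:multiplicative}, $D_S$ and $D_T$ are each multiplicative, hence so is their composition $D_{ST}=D_SD_T$, since $D_{ST}(\chi_1\cdot\chi_2) = D_S(D_T\chi_1\cdot D_T\chi_2) = D_SD_T\chi_1\cdot D_SD_T\chi_2$; then the converse direction of the multiplicativity/shuffle-relation correspondence (which follows from \eqref{eq:23f}, choosing observables that isolate a given pair $D_u\chi_1\cdot D_v\chi_2$) yields the shuffle relations for $ST$ — although making that isolation rigorous for \emph{all} pairs $u,v$ simultaneously essentially recovers Lemma~\ref{lem:independence}-type genericity arguments, so the direct combinatorial proof above is the safer one to write out.
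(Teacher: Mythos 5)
Your proposal is correct and follows essentially the same route as the paper: the key identity you isolate, $(ST,u\sh v)=\sum (S,u_{(1)}\sh v_{(1)})(T,u_{(2)}\sh v_{(2)})$ over deconcatenations of $u$ and $v$, is exactly the paper's central observation that deconcatenating the words in a shuffle amounts to shuffling the deconcatenations of $u$ and $v$ (proved there, as you suggest, via the recursion \eqref{eq:shufflerecurrence}), and the final factorization using the shuffle relations of $S$ and $T$ together with the convolution formula \eqref{eq:convolution} is the same computation.
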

\begin{proof} Recall that the coefficients of \(ST\) are given by the convolution product as in
\eqref{eq:convolution}, which is based on deconcatenation. The result is a consequence of the following
observation: the deconcatenation of the words in a shuffle \(u\sh v\) may be found by shuffling the
deconcatenations of \(u\) and \(v\). An example of this observation follows. Deconcatenation of the shuffle
\(k\ell\sh m= k\ell m+k m \ell+mk\ell\) gives the 12 pairs
 \[(k\ell
m,\emptyset)+(k\ell ,m)+(k ,\ell m)+(\emptyset,k\ell m)+(km\ell,\emptyset)+\cdots+ (\emptyset,mk\ell). \] On
the other hand by deconcatenating \(k\ell\) we obtain  \((k\ell,\emptyset)+(k,\ell)+(\emptyset,k\ell)\), and
by deconcatenating \(m\) obtain  \((m,\emptyset)+(\emptyset,m)\). Shuffling now as in
\begin{eqnarray*}
(k\ell,\emptyset)\sh (m,\emptyset) &=& (k\ell\sh m, \emptyset \sh \emptyset) \\
 (k\ell,\emptyset)\sh (\emptyset,m) &=& (k\ell\sh \emptyset,\emptyset\sh m) ,
\end{eqnarray*}
etc.\ yields the same 12 pairs (the first line of the display gives  \((k\ell m+k m \ell+mk\ell,\emptyset)\),
the second \((k\ell, m)\), etc). To prove the observation in the general case, one may use the recurrence
\eqref{eq:shufflerecurrence}.

By using the observation, \((ST,u\sh v)\) may be written as a sum of products
\[\sum_{ij} (S,u_i\sh
v_j)(T,u_i^\prime\sh v_j^\prime),
\]
(\(u_iu_i^\prime = u\) and \(v_jv_j^\prime = v\)) or, since \(S\) and  \(T\) satisy the shuffle relations,
\begin{eqnarray*}&&\sum_{ij} (S,u_i)(S,v_j)(T,u_i^\prime)(T,v_j^\prime)\\
&&\qquad\qquad = \sum_i (S,u_i)(T,u^\prime_i) \sum_j
(S,v_j)(T,v^\prime_j)=(ST,u)(ST,v).
\end{eqnarray*}
\end{proof}

\begin{prop}\label{prop:Jtildemultiplicative} For a splitting integrator for the Stratonovich system \eqref{eq:stratonovich} the coefficients \(\widetilde{J}_w(t_0+h;t_0)\) satisfy the shuffle relations.
\end{prop}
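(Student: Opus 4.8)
The plan is to reduce the claim to the building blocks already available in the paper. Recall that for a splitting integrator the pullback operator $\psi_{t_0+h;t_0}$ is, by construction, a \emph{composition} $\Phi^{(1)}\cdots\Phi^{(m)}$ of pullback operators of solution maps of the individual split systems $\mathcal{S}_i$, so that $D_{\widetilde J}=D_{S^{(1)}}\cdots D_{S^{(m)}}$, where each $S^{(i)}\in\R\langle\langle\A\rangle\rangle$ is the Chen-type series whose coefficients are Stratonovich iterated integrals of the split system $\mathcal{S}_i$ (evaluated over the appropriate subinterval $[t_0+c_i h,t_0+d_i h]$). Equivalently, $\widetilde J=S^{(1)}\cdots S^{(m)}$ in $\R\langle\langle\A\rangle\rangle$. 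Thus it suffices to show (i) each factor $S^{(i)}$ has $(S^{(i)})_\emptyset=1$ and satisfies the shuffle relations, and (ii) a product of finitely many such series again does. Point (ii) is exactly Lemma~\ref{lem:shuffle} applied inductively on the number $m$ of composed maps; since $(S^{(i)})_\emptyset=1$ for each $i$, we get $\widetilde J_\emptyset=1$ and the shuffle relations for $\widetilde J$ as soon as (i) is established.

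For (i), the key point is that each factor $S^{(i)}$ is itself a Chen series of Stratonovich iterated integrals, merely for a sub-family of the vector fields and over a shifted time interval rather than $[t_0,t_0+h]$. First I would make precise that the coefficients of $S^{(i)}$ corresponding to a word $w$ built from letters of $\mathcal{S}_i$ equal $J^{(i)}_w(t_0+d_ih;t_0+c_ih)$ (the Stratonovich iterated integral over the subinterval, with $ds$ replacing $\circ d\B_k$ for deterministic letters), while the coefficient of any word containing a letter \emph{not} in $\mathcal{S}_i$ is zero. Then I would invoke Proposition~\ref{prop:Jmultiplicative}: its proof rests on the integration-by-parts identity \eqref{eq:integrationbyparts} together with the recursions \eqref{eq:shufflerecurrence} and \eqref{eq:recurrenceintegrals}, none of which cares whether the interval of integration begins at $t_0$ or at some $t_0+c_ih$, nor which sub-family of Brownian motions is used. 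Hence the iterated integrals $J^{(i)}_w$ satisfy the shuffle relations among themselves, and one only needs to check that extending the coefficient sequence by zero on words that use a forbidden letter preserves the shuffle relations: if either $u$ or $v$ contains a forbidden letter then so does every word in $u\sh v$, so both sides of the shuffle relation vanish; if neither does, we are back in the already-proved case. Since the empty word uses no letters and has coefficient $J^{(i)}_\emptyset=1$, point (i) follows.

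Assembling these pieces: $\widetilde J = S^{(1)}\cdots S^{(m)}$, each factor has constant term $1$ and satisfies the shuffle relations by (i), so by $m-1$ applications of Lemma~\ref{lem:shuffle} the product $\widetilde J$ also has $\widetilde J_\emptyset=1$ and satisfies the shuffle relations, which is precisely the assertion $(\widetilde J, u\sh v)=(\widetilde J,u)(\widetilde J,v)$ for all $u,v\in\W$. I expect the only delicate step to be the bookkeeping in (i) — namely spelling out cleanly that a splitting step's pullback really is a concatenation product of the split-system Chen series (this is implicit in the worked examples of Section~4 but should be stated once and for all), and that padding with zeros on words with forbidden letters is harmless for the shuffle identities. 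Everything after that is a mechanical induction. An alternative, slightly slicker route avoids even invoking $\widetilde J=\prod S^{(i)}$ explicitly: since each $D_{S^{(i)}}$ is the pullback of an actual map it is multiplicative, a composition of multiplicative operators is multiplicative, so $D_{\widetilde J}$ is multiplicative, and then Proposition~\ref{prop:multiplicative} (together with $\widetilde J_\emptyset=1$, immediate from $\widetilde J_\emptyset = \prod (S^{(i)})_\emptyset = 1$) yields the shuffle relations at once; I would present the factorization/Lemma~\ref{lem:shuffle} argument as the main line and mention this multiplicativity shortcut as a remark.
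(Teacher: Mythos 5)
Your proposal is correct and follows essentially the same route as the paper: factor $\widetilde J$ as the concatenation product of the split systems' Chen series, apply Proposition~\ref{prop:Jmultiplicative} to each factor, and conclude by Lemma~\ref{lem:shuffle}; even your closing remark about multiplicativity of pullbacks matches the alternative proof the paper sketches in a remark. You merely supply details the paper leaves implicit (the zero coefficients on words containing letters absent from a split system, and the shifted integration intervals), and those checks are carried out correctly.
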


\begin{proof}
The proof  is a trivial consequence of the lemma, because  \(D_{\widetilde J}\) is a composition of solution
operators \(D_{J_i}\) associated with  the split systems and therefore, by the preceding proposition, a
composition of operators that satisfy the shuffle conditions.
\end{proof}

After the last two propositions, it is easy to see that the Stratonovich strong order conditions are not
independent. For instance from the shuffle relations
 \(J_\ell(t_0+h;t_0)^2 = 2 J_{\ell\ell}(t_0+h;t_0)\) and \(\widetilde J_\ell(t_0+h;t_0)^2 = 2 \widetilde J_{\ell\ell}(t_0+h;t_0)\), we conclude that the strong
order condition \(\widetilde J_{\ell\ell}(t_0+h;t_0) = J_{\ell\ell}(t_0+h;t_0)\) corresponding to the word
\(\ell\ell\) is fulfilled if the strong order condition \(\widetilde J_{\ell}(t_0+h;t_0) =
J_{\ell}(t_0+h;t_0)\) holds. Analogously, if \(k\neq \ell\) the order condition for \(k\ell\) is implied by
those of \(\ell k\), \(k\) and \(\ell\), etc. It is possible to obtain \emph{independent} order conditions by
keeping only the conditions corresponding to the so-called \emph{Lyndon} words \cite{gaines} that we describe next. We order the alphabet \(\A\) and then order words lexicographically; a Lyndon
word is a word that is strictly smaller than all the words obtained by rotating its letters. If the alphabet
is \(\A=\{a,A\}\) and \(a<A\), then \(aA\) is a Lyndon word because it precedes the rotated \(Aa\). Similarly
\(aaA\) is a Lyndon word while \(aAa\) and \(Aaa\) are not. For this simple alphabet, the Lyndon words with
three or fewer letters are \(a\), \(A\), \(aA\), \(aaA\), \(aAA\); their order conditions are independent and
imply, via the shuffle relations, the order conditions for \(aa\), \(AA\), \(aaa\), \(aAa\), \(Aaa\), \(AaA\),
\(AAa\) and \(AAA\).

For reasons of brevity, the independence of the Stratonovich weak order
conditions will not be discussed in this paper.

\begin{rem}From \eqref{eq:23f} and Lemma~\ref{lem:independence} the shuffle conditions
 are \emph{necessary} for \(D_S\) to be multiplicative for each choice of \(d\) and
 vector fields \(f_\ell\), \(\ell\in\A\). Hence the last two propositions may be proved in an alternative way:
 one first notices the multiplicativity of \(D_J\) and \(D_{\widetilde J}\) as expansions of pullbacks associated
 with the true and numerical solution respectively and then concludes that the shuffle conditions are satisfied
 because the multiplicativity holds for all choices of vector fields. Recall from Remark~\ref{rem:integralsbrownian}
 that changing the vector fields does not alter the iterated integrals.
 \end{rem}

\subsection{The quasishuffle product}
As we noted above, the shuffle property of the Stratonovich  iterated integrals stems from the formula of integration by parts in
\eqref{eq:integrationbyparts}.
For the Ito calculus,  formula \eqref{eq:integrationbyparts} has to be replaced by
\begin{eqnarray}
\nonumber &&\big(\B_\ell(t_0+h)-\B_\ell(t_0)\big)
\big(\B_m(t_0+h)-\B_m(t_0)\big)= \\
\nonumber &&\qquad \qquad\qquad \int_{t_0}^{t_0+h} \big(\B_\ell(t_0+s)-\B_\ell(t_0)\big) d\B_m(s)\\
\nonumber &&\qquad\qquad\qquad+\int_{t_0}^{t_0+h} \big(\B_m(t_0+s)-\B_m(t_0)\big) d\B_\ell(s)\\
&& \qquad\qquad\qquad+\left[\big(\B_\ell(t_0+h)-\B_\ell(t_0)\big),
\big(\B_m(t_0+h)-\B_m(t_0)\big)\right],\label{eq:integrationbypartsito}
\end{eqnarray}
where  the last term represents the quadratic covariation (see e.g.\ \cite[Chapter 5]{b}). If \(\ell=m\in\Asto\), then the quadratic covariation
in \eqref{eq:integrationbypartsito} is
\(h\); for all other combinations of letters the quadratic covariation vanishes.

The \emph{quasishuffle product} \(\bowtie \) to be defined presently is such  that for any two letters \(\ell,
m\in \eA\), the computation of \(\ell\bowtie m\) mimics the integration by parts relation
\eqref{eq:integrationbypartsito}. In combinatorial algebra, the definition of a quasishuffle product depends
on the choice of a so-called bracket \([\cdot,\cdot]\); different brackets lead to different quasishuffle
products as defined by Hoffman \cite{hoffman}. Throughout this paper we only work with one fixed choice of
bracket defined as follows.
 For letters \(\ell,m\in\eA\),  \([\ell,m]\) takes the value \(\bar A\in \R\langle
\eA\rangle\) if \(\ell=m=A\in\Asto\); \([\ell,m]=0\in \R\langle \eA\rangle\) in all other cases. Then the
quasishuffle product of words \(u\bowtie v\in \R\langle \eA\rangle\) is  defined recursively by
\[
\emptyset \bowtie \emptyset = \emptyset,\qquad \emptyset \bowtie \ell = \ell \bowtie \emptyset = \ell,\quad \ell\in\eA,
\]
and
\[
u\ell\bowtie vm =(u\ell \bowtie v)m+(u\bowtie vm)\ell+(u\bowtie v)[x,y],\qquad u,v\in\W,\quad \ell,m\in\eA.
\]
In the particular case \(u = v=\emptyset\), the last relation yields \(\ell\bowtie m= \ell m + m\ell +[\ell,m]\),
 a transcription of  \eqref{eq:integrationbypartsito}.

The next four results are counterparts of Propositions~\ref{prop:shuffle}--\ref{prop:Jtildemultiplicative}. The bilinear form \((\cdot,\cdot)\) in \eqref{eq:bilinear}, which we defined in \(\R\langle\langle \A\rangle\rangle\times \R\langle \A\rangle\), is now extended to \(\R\langle\langle \eA\rangle\rangle\times \R\langle \eA\rangle\).
\begin{prop}\label{prop:quasishuffle}For any \(S\in\R\langle\langle \eA\rangle\rangle\) and any pair of observables
\[
D_S(\chi_1\cdot\chi_2) =  \sum_{u,v\in\overline\W} (S,u\bowtie v)\: D_u\chi_1\cdot D_v\chi_2.
\]
\end{prop}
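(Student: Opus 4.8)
The plan is to mimic the proof of Proposition~\ref{prop:shuffle} as closely as possible, replacing the ordinary Leibniz rule for first-order operators by the modified Leibniz rule that governs the second-order operators $D_{\bar A}$. It suffices to establish the identity when $S$ coincides with a single word $w\in\overline\W$, since the general case follows by linearity; and we argue by induction on the length (number of letters) of $w$. The base case $w=\emptyset$ is immediate because $D_\emptyset=Id$ and $(\emptyset,u\bowtie v)$ is nonzero (and equal to $1$) only for $u=v=\emptyset$.

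For the inductive step I would write $w=w'\ell$ with $\ell\in\eA$ and split into two cases according to whether $\ell$ is a letter of the original alphabet $\A=\Adet\cup\Asto$ (so $D_\ell$ is first order) or $\ell=\bar A$ for some $A\in\Asto$ (so $D_\ell$ is the second-order operator in \eqref{eq:differentialoperatorbis}). In the first case the computation is verbatim that of Proposition~\ref{prop:shuffle}: from $D_{w'\ell}(\chi_1\cdot\chi_2)=D_{w'}\big(D_\ell\chi_1\cdot\chi_2+\chi_1\cdot D_\ell\chi_2\big)$, the induction hypothesis, and the identities $(w',u\bowtie v)=(w'\ell,u\ell\bowtie v)=(w'\ell,u\bowtie v\ell)$ coming from the recursive definition of $\bowtie$ (the extra bracket term contributes nothing here because appending $\ell$ to the tail does not create a new coincidence), one recovers the claimed formula after observing that a word $w'\ell$ arising from $u'\bowtie v'$ must end in the last letter of $u'$ or of $v'$. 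In the second case, $\ell=\bar A$, the key input is the Leibniz-type rule for $D_{\bar A}$: since $D_{\bar A}\chi=\frac12\chi''[f_A,f_A]$ and $D_A\chi=\chi'f_A$, a direct computation gives
\[
D_{\bar A}(\chi_1\cdot\chi_2)=(D_{\bar A}\chi_1)\cdot\chi_2+\chi_1\cdot(D_{\bar A}\chi_2)+(D_A\chi_1)\cdot(D_A\chi_2).
\]
Feeding this into $D_{w'\bar A}(\chi_1\cdot\chi_2)=D_{w'}\big(D_{\bar A}\chi_1\cdot\chi_2+\chi_1\cdot D_{\bar A}\chi_2+D_A\chi_1\cdot D_A\chi_2\big)$ and applying the induction hypothesis produces three sums, indexed respectively by pairs $(u\bar A,v)$, $(u,v\bar A)$ and $(uA,vA)$; these are exactly the three families of pairs that the recursion $u\bar A\bowtie v\,{}:\,$ — more precisely the relation $uA\bowtie vA=(uA\bowtie v)A+(u\bowtie vA)A+(u\bowtie v)\bar A$ with its bracket term $[A,A]=\bar A$ — collects into words ending in $\bar A$. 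Matching coefficients via $(w',\cdot)$ and the identities $(w',u\bowtie v)=(w'\bar A,uA\bowtie vA)$ (extracting the bracket contribution), $(w',u\bowtie v)=(w'\bar A,u\bar A\bowtie v)=(w'\bar A,u\bowtie v\bar A)$ finishes the step.

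The main obstacle, and the part requiring genuine care rather than routine bookkeeping, is the combinatorial matching in the $\ell=\bar A$ case: one must check that every way a word $w=w'\bar A$ can arise as a term of $u'\bowtie v'$ falls into exactly one of the three types $u'=u\bar A$, $v'=v\bar A$, or ($u'=uA$ and $v'=vA$), and that the coefficients line up without over- or under-counting. This is the analogue of the "last letter" dichotomy in the shuffle proof, now a trichotomy, and it is precisely what the bracket $[\cdot,\cdot]$ is engineered to record; verifying it amounts to reading off the three summands of the recursive definition of $\bowtie$ and confirming they are mutually exclusive and exhaustive for the tail letter $\bar A$. Everything else — the first-order subcase, the base case, the passage from words to general $S$ — is a direct transcription of the shuffle argument, so I would present the $\bar A$ subcase in full and merely indicate that the rest parallels Proposition~\ref{prop:shuffle}.
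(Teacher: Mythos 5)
Your proposal is correct and follows essentially the same route as the paper, whose proof of this proposition is a one-line remark that the technique of Proposition~\ref{prop:shuffle} applies, only lengthier because of the second-order operators \(D_{\bar A}\). The modified Leibniz rule \(D_{\bar A}(\chi_1\cdot\chi_2)=(D_{\bar A}\chi_1)\cdot\chi_2+\chi_1\cdot(D_{\bar A}\chi_2)+(D_A\chi_1)\cdot(D_A\chi_2)\) and the last-letter trichotomy matching the three summands of the quasishuffle recursion are precisely the details the paper leaves implicit, and you have identified and handled them correctly.
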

\begin{proof} One may use the same technique as in Proposition~\ref{prop:shuffle}. Here the proof is lengthier because it has to contemplate the possibility  \(\ell = \bar A\), \(A\in\Asto\) in which case
\(D_\ell\) is a second order operator.
\end{proof}

This yields immediately:

\begin{prop}\label{prop:multiplicativeito} Consider a series \(S\in\R\langle\langle \eA\rangle\rangle\), \(S\neq 0\). Then the series of operators \(D_S\) is multiplicative
if  \(S_\emptyset=1\) and for each pair of words \(u,v\in\W\), the  quasishuffle relation
\[
(S,u\bowtie v) = (S,u)(S,v)
\]
holds.
\end{prop}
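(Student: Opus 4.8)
The plan is to reproduce, mutatis mutandis, the short argument that gave Proposition~\ref{prop:multiplicative}, with the shuffle product replaced by the quasishuffle product $\bowtie$ and Proposition~\ref{prop:quasishuffle} used in place of Proposition~\ref{prop:shuffle}. First I would apply Proposition~\ref{prop:quasishuffle} to write, for any pair of observables $\chi_1,\chi_2$,
\[
D_S(\chi_1\cdot\chi_2) = \sum_{u,v\in\overline\W}(S,u\bowtie v)\,D_u\chi_1\cdot D_v\chi_2 ,
\]
and, directly from the definition of $D_S$ and of the bilinear pairing $(\cdot,\cdot)$ (now extended to $\R\langle\langle\eA\rangle\rangle\times\R\langle\eA\rangle$),
\[
(D_S\chi_1)\cdot(D_S\chi_2) = \sum_{u,v\in\overline\W}(S,u)(S,v)\,D_u\chi_1\cdot D_v\chi_2 .
\]

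Subtracting these two identities yields the Ito analogue of \eqref{eq:23f},
\[
D_S(\chi_1\cdot\chi_2)-(D_S\chi_1)\cdot(D_S\chi_2) = \sum_{u,v\in\overline\W}\Big((S,u\bowtie v)-(S,u)(S,v)\Big)\,D_u\chi_1\cdot D_v\chi_2 ,
\]
the subtraction being legitimate because for each ordered pair $(u,v)$ exactly one term of each sum carries the index $(u,v)$, so that passing to the difference is simply equating coefficients of $D_u\chi_1\cdot D_v\chi_2$ and no infinite summation is performed. If the quasishuffle relations $(S,u\bowtie v)=(S,u)(S,v)$ hold for all $u,v\in\overline\W$, every coefficient on the right-hand side vanishes, whence $D_S(\chi_1\cdot\chi_2)=(D_S\chi_1)\cdot(D_S\chi_2)$; that is, $D_S$ is multiplicative. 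The hypothesis $S_\emptyset=1$ enters only through the pair $(u,v)=(\emptyset,\emptyset)$ and records that $D_S$ fixes the constant observable, exactly as the pullback expansions $D_I$ and $D_{\widetilde I}$ do.

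There is essentially no obstacle at this stage: the whole combinatorial burden has been shifted onto Proposition~\ref{prop:quasishuffle}, whose proof is the delicate one because it must keep track of the cross terms produced when some letter equals $\bar A$, $A\in\Asto$, so that $D_{\bar A}$ is a second-order operator and the Leibniz rule acquires the extra contribution matching the bracket used to define $\bowtie$. Granting Proposition~\ref{prop:quasishuffle}, the present proposition is, as the text anticipates with the words ``this yields immediately'', a one-line deduction.
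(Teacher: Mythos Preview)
Your proposal is correct and follows essentially the same path as the paper: the paper simply writes ``This yields immediately'' after Proposition~\ref{prop:quasishuffle}, and your argument makes explicit the one-line deduction, namely forming the quasishuffle analogue of \eqref{eq:23f} and observing that the right-hand side vanishes under the quasishuffle relations. Your remark that the combinatorial work is entirely contained in Proposition~\ref{prop:quasishuffle} matches the paper's structure exactly.
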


The proofs of the following propositions are similar to those of Propositions~\ref{prop:Jmultiplicative} and
\ref{prop:Jtildemultiplicative} respectively.

\begin{prop}\label{prop:Imultiplicative} The  the Ito iterated  integrals \(I_w(t_0+h;t_0)\) satisfy the quasishuffle relations.
\end{prop}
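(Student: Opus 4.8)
The plan is to establish the quasishuffle relations $I_u I_v = I_{u\bowtie v}$ for the Ito iterated integrals by induction on the total number of letters in the pair $(u,v)$, following exactly the template of Proposition~\ref{prop:Jmultiplicative} but with the Ito integration-by-parts formula \eqref{eq:integrationbypartsito} in place of the Stratonovich formula \eqref{eq:integrationbyparts}. The base case is two single letters: for $\ell,m\in\eA$, formula \eqref{eq:integrationbypartsito}, together with the evaluation of the quadratic covariation (equal to $h$ if $\ell=m=A\in\Asto$, and $0$ otherwise), is precisely the statement $I_\ell I_m = I_{m\ell}+I_{\ell m}+I_{[\ell,m]}$, where $I_{[\ell,m]}$ is $I_{\bar A}$ or $0$ accordingly. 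This matches $\ell\bowtie m = \ell m + m\ell + [\ell,m]$.

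For the inductive step I would write $u = u'\ell$ and $v = v'm$ (the cases where one of the words is empty being trivial), and apply the Ito product rule to the two semimartingales $t\mapsto I_{u}(t_0+t;t_0)$ and $t\mapsto I_{v}(t_0+t;t_0)$. Using the recursion $I_{u'\ell}(t_0+h;t_0) = \int_{t_0}^{t_0+h} I_{u'}(s;t_0)\,d\B_\ell(s)$ (the Ito analogue of \eqref{eq:recurrenceintegrals}), integration by parts gives
\[
I_{u}I_{v} = \int_{t_0}^{t_0+h} I_{u}(s;t_0)\, dI_{v}(s;t_0) + \int_{t_0}^{t_0+h} I_{v}(s;t_0)\, dI_{u}(s;t_0) + \big[I_{u},I_{v}\big]_{t_0}^{t_0+h}.
\]
The first integral is $\int I_{u}\, I_{v'}\, d\B_m = I_{(u\bowtie v')m}$ after invoking the induction hypothesis $I_{u}I_{v'} = I_{u\bowtie v'}$ and linearity of the stochastic integral; similarly the second is $I_{(u'\bowtie v)\ell}$. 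The covariation term is nonzero only when $\ell=m=A\in\Asto$, in which case $d[I_u,I_v]_s = I_{u'}(s)I_{v'}(s)\,d[\B_A,\B_A]_s = I_{u'}(s)I_{v'}(s)\,ds$, and integrating and using the induction hypothesis once more yields $I_{(u'\bowtie v')\bar A}$, which is exactly $(u'\bowtie v')[\ell,m]$; in all other cases this term vanishes, matching $[\ell,m]=0$. Summing the three pieces reproduces the recursive definition $u\bowtie v = (u'\ell\bowtie v')m + (u'\bowtie v'm)\ell + (u'\bowtie v')[\ell,m]$, closing the induction.

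The main obstacle, and the reason the authors remark that this proof is ``lengthier,'' is bookkeeping around the extended alphabet: one must be careful that the letters $\bar A$ (which index second-order operators on the differential-operator side) are treated purely combinatorially here — as ordinary letters of $\eA$ carrying weight $1/2$ whose associated integral $I_{\bar A}$ is $\int_{t_0}^{t_0+h} ds = h$ — and that the quadratic covariation of $\B_{\bar A}$-type ``processes'' never enters, since $\bar A$ behaves like a deterministic letter in \eqref{eq:taylorito}. One must also check that the induction is genuinely on pair-length and that the recursion \eqref{eq:shufflerecurrence}-style unfolding of $\bowtie$ lines up term-by-term with the three contributions of the Ito product rule, including the degenerate subcases where $u'$ or $v'$ is empty. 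None of this is deep, but it requires care to avoid sign or indexing slips; modulo that, the argument is a routine transcription of the Stratonovich proof with the covariation correction term tracked explicitly.
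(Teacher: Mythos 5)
Your proof is correct and follows exactly the route the paper intends: the paper proves the Stratonovich case (Proposition~\ref{prop:Jmultiplicative}) from the integration-by-parts identity plus induction on the recursive definitions of the product and of the iterated integrals, and states that the Ito case is proved similarly — which is precisely your argument, with the quadratic-covariation term of \eqref{eq:integrationbypartsito} matched to the bracket term \((u'\bowtie v')[\ell,m]\) and \(I_{(u'\bowtie v')\bar A}=\int I_{u'\bowtie v'}\,ds\). The base case, the peeling of the rightmost letters in line with \eqref{eq:recurrenceintegrals}, and the induction on total letter count all check out.
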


\begin{prop}\label{prop:Itildemultiplicative} For a splitting integrator for the Ito system \eqref{eq:ito}, the coefficients \(\widetilde{I}_w(t_0+h;t_0)\)
 satisfy the quasishuffle relations.
\end{prop}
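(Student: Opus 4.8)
\medskip\noindent\textbf{Sketch of proof.} The plan is to follow the argument used for Proposition~\ref{prop:Jtildemultiplicative}, in its quasishuffle incarnation. By construction, $D_{\widetilde I}$ is the composition $D_{I_1}D_{I_2}\cdots D_{I_m}$ of the series of differential operators that expand the pullbacks of the solution maps of the split systems $\mathcal S_i$ entering \eqref{eq:mapscomposed}. It therefore suffices to establish two facts: (i) each factor $D_{I_i}$ has constant term $1$ and satisfies the quasishuffle relations; and (ii) the set of series in $\R\langle\langle\eA\rangle\rangle$ with those two properties is closed under the convolution product. Granting (i) and (ii), a trivial induction on the number $m$ of factors shows that $\widetilde I_\emptyset=1$ and that $D_{\widetilde I}$ satisfies the quasishuffle relations.

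Fact (i) would follow from Proposition~\ref{prop:Imultiplicative}. The coefficients of $D_{I_i}$ are the Ito iterated integrals $I_w(t_0+d_ih;t_0+c_ih)$ attached to the Wiener processes occurring in $\mathcal S_i$; these are themselves Ito iterated integrals (over the subinterval $[t_0+c_ih,t_0+d_ih]$), so their constant term is $I_\emptyset=1$ and, by Proposition~\ref{prop:Imultiplicative}, they satisfy the quasishuffle relations for all words built from the letters of $\mathcal S_i$ and the corresponding $\bar A$'s. Extending these coefficients by zero to the remaining words of $\overline\W$ preserves the relations: every word appearing in a quasishuffle $u\bowtie v$ contains all the letters of $u$ and of $v$ (a pair $A,A$ may have been fused into $\bar A$, but $\bar A$ lies in the sub-alphabet of $\mathcal S_i$ exactly when $A$ does), so both sides of $(S,u\bowtie v)=(S,u)(S,v)$ vanish as soon as $u$ or $v$ contains a letter foreign to $\mathcal S_i$.

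Fact (ii) is the quasishuffle analogue of Lemma~\ref{lem:shuffle}, which I would state and prove as a separate lemma. As in the shuffle case, the coefficients of a product $ST$ are governed by the convolution \eqref{eq:convolution}, i.e.\ by deconcatenation, and the crux is the combinatorial identity that \emph{deconcatenating the words of a quasishuffle $u\bowtie v$ yields precisely the words obtained by quasishuffling the deconcatenations of $u$ and of $v$} --- in Hopf-algebraic language, that the deconcatenation coproduct is multiplicative for $\bowtie$. Once this identity is available, the computation closing the proof of Lemma~\ref{lem:shuffle} goes through unchanged: $(ST,u\bowtie v)$ equals $\sum_{ij}(S,u_i\bowtie v_j)(T,u'_i\bowtie v'_j)$ with $u_iu'_i=u$, $v_jv'_j=v$; since $S$ and $T$ satisfy the quasishuffle relations this becomes $\sum_{ij}(S,u_i)(S,v_j)(T,u'_i)(T,v'_j)$, which regroups into $\big(\sum_i(S,u_i)(T,u'_i)\big)\big(\sum_j(S,v_j)(T,v'_j)\big)=(ST,u)(ST,v)$, while $(ST)_\emptyset=c_\emptyset d_\emptyset=1$.

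The only genuine work is that combinatorial identity, and this is also where something new happens relative to the shuffle case: when $u\ell\bowtie vm$ is expanded, the term $(u\bowtie v)[\ell,m]=(u\bowtie v)\bar A$ (present when $\ell=m=A\in\Asto$) has fused the two trailing letters into a single new letter $\bar A$, which upon deconcatenation can only move wholly to the left or wholly to the right factor. One has to check that, after deconcatenation, these contributions are matched exactly by the deconcatenations of $u\ell$ and $vm$ in which $\ell$ and $m$ are both kept in the second (respectively first) factor and then re-fuse when their factors are quasishuffled; I would do this by induction on the lengths of $u$ and $v$ using the defining recursion of $\bowtie$, exactly as \eqref{eq:shufflerecurrence} was used in Lemma~\ref{lem:shuffle}. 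As a fallback one can bypass the combinatorial lemma and argue as in the Remark after Proposition~\ref{prop:Jtildemultiplicative}: $D_{\widetilde I}$ expands a pullback operator and is hence multiplicative, so by Proposition~\ref{prop:quasishuffle} and the formula $(D_S\chi_1)\cdot(D_S\chi_2)=\sum_{u,v}(S,u)(S,v)\,D_u\chi_1\cdot D_v\chi_2$ one gets $\sum_{u,v}\big((\widetilde I,u\bowtie v)-(\widetilde I,u)(\widetilde I,v)\big)D_u\chi_1\cdot D_v\chi_2=0$ identically; choosing the dimension, the vector fields and the observables $\chi_1,\chi_2$ via a two-word refinement of Lemma~\ref{lem:independence} would then isolate each pair $(u,v)$ and force $(\widetilde I,u\bowtie v)=(\widetilde I,u)(\widetilde I,v)$ --- legitimate because changing the vector fields does not alter the $\widetilde I_w$ (the Ito analogue of Remark~\ref{rem:integralsbrownian}).
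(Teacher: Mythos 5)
Your proposal is correct and takes essentially the same route as the paper: the paper's own proof simply says the argument is ``similar'' to Propositions~\ref{prop:Jmultiplicative} and~\ref{prop:Jtildemultiplicative}, i.e.\ one composes the factors \(D_{I_i}\), each of whose coefficients satisfy the quasishuffle relations by Proposition~\ref{prop:Imultiplicative}, and invokes the quasishuffle analogue of Lemma~\ref{lem:shuffle}. You merely spell out details the paper leaves implicit (the compatibility of deconcatenation with \(\bowtie\), including the fused letter \(\bar A\), and the extension by zero to the sub-alphabet of each split system), which agrees with rather than departs from the paper's argument.
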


The last two propositions show immediately that the  Ito strong order conditions are not independent. The dependence between the Ito weak order conditions will be discussed after Proposition~\ref{eq:shuffleexpnumerico}.

\subsection{Concatenating Chen series}
The shuffle (quasishuffle) relations constrain the values of Stratonovich (Ito) iterated integrals corresponding to different words but based on a common interval \((t_0,t_0+h)\). Iterated integrals corresponding to adjacent intervals are also interrelated, as we now discuss.

 Solution operators of Stratonovich or Ito systems satisfy
\[
\varphi_{t_2,t_1}\circ\varphi_{t_1,t_0} = \varphi_{t_2,t_0},\qquad t_2\geq t_1\geq t_0.
\]
From here we get the following relations between series of operators
\[
D_{J(t_1;t_0)}D_{J(t_2;t_1)} = D_{J(t_2;t_0)}\qquad
D_{I(t_1;t_0)}D_{I(t_2;t_1)} = D_{I(t_2;t_0)},\qquad t_2\geq t_1\geq t_0;
\]
the corresponding relations between elements of \(\R\langle\langle \A\rangle\rangle\) or \(\R\langle\langle
\eA\rangle\rangle\) (Chen series) are
\begin{equation}\label{eq:chen}
J(t_1;t_0)J(t_2;t_1) = J(t_2;t_0);\qquad
I(t_1;t_0)I(t_2;t_1) = I(t_2;t_0),\qquad t_2\geq t_1\geq t_0.
\end{equation}

The equalities in \eqref{eq:chen} are, in view of \eqref{eq:convolution}, a family of relations between
iterated integrals first noted by Chen \cite{chen} in the case where there are no stochastic letter. For
instance, for words with two letters:
\[
J_{\ell m}(t_2;t_0)^2 = J_{\ell m}(t_1;t_0)+J_{\ell}(t_1;t_0)J_m(t_2;t_1)+J_{\ell m}(t_2;t_1),
\]
etc. These relations may alternatively be proved by  manipulating the integrals, without going through the series of differential operators
as above.
\subsection{Composing word series}
We conclude our study of the shuffle and quasishuffle products by  showing that, in some circumstances, the composition \(\W_T(
\W_S(x) )\) of two word series is  another word series.

Let us begin with the Stratonovich case. If \(\chi\) is an observable and \(w\in\W\), then \(D_w\chi\) is a
sum of terms each of which is a derivative \(\chi^{(s)}(x)\) acting on combinations of derivatives of the
functions \(f_k\), \(k\in\A\). A simple example is:
\[
D_{\ell m}\chi(x) =\chi^{\prime\prime}(x)[f_\ell(x),f_m(x)]+ \chi^\prime(x) f^\prime_m(x)f_\ell(x).
\]
Here, the word \(\ell m\) may have weight \(1\), \(3/2\) or \(2\) depending of whether \(\ell\) and \(m\) are
stochastic of deterministic; the thing to observe is that in each term of the right-hand side of the last
equality the \(f_k\) \(k\in\A\), that appear have a combined weight that matches the weight of \(\ell m\).

If  \(S\in\R\langle\langle \A\rangle\rangle\) and \(D_S\) is the corresponding series of differential
operators we may arrange \(D_S\chi\) by grouping the terms where the combined weight of the \(f_k\) that
appear is successively 0, 1/2, 1, 3/2, etc. On the other hand if \(\W_S(x)\) is the associated word series and
\(S_\emptyset = 1\) so that \(W_S(x)-x = \mathcal{O}(1/2)\), we may Taylor expand as follows
 \begin{eqnarray*}
&& \chi(\W_S(x)) = \chi(x+[W_S(x)-x]) = \chi(x)+ \chi^\prime(x) [W_S(x)-x]\\
&&\qquad\qquad\qquad\qquad\qquad+\frac{1}{2}\chi^{\prime\prime}(x)
 [W_S(x)-x,W_S(x)-x]+\cdots
 \end{eqnarray*}
Here the right-hand side may be arranged, as we did in the case of \(D_S\chi\), by grouping the terms where the
combined weight of the \(f_k\) that appear is successively 0, 1/2, 1, 3/2, etc. This arrangement may be
carried out because \([W_S(x)-x]^r\) only contributes terms of combined weight \(\geq r/2\) and therefore
for each weight there is only a finite number of terms to be grouped. It turns out that if \(S\) is
multiplicative the expansions of \(D_S \chi(x)\) and \(\chi(W_S(x))\) coincide.

\begin{prop}Suppose that \(S\in\R\langle\langle \A\rangle\rangle\) has \(S_\emptyset = 1\) and satisfies
the shuffle relations. Then for any observable \(\chi\), the expansion of \(\chi(W_S(x))\) coincides with
\(D_S \chi(x)\).
\end{prop}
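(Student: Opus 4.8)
The plan is to recognize both operations $\chi\mapsto D_S\chi(x)$ and $\chi\mapsto\chi(\W_S(x))$ as \emph{multiplicative} linear operators on observables and to exploit the fact that a multiplicative linear operator is determined by its values on the coordinate functions $x\mapsto x^i$. Write $\Phi$ for the operator sending an observable $\chi$ to the formal series $\Phi\chi(x)=\chi(\W_S(x))$, understood through the grouping by combined weight of the $f_k$ described just before the proposition; this is legitimate precisely because $S_\emptyset=1$ makes the weight-$0$ part of $\W_S(x)$ equal to $x$, so that $\W_S(x)-x$ has no weight-$0$ term and each weight receives contributions from only finitely many Taylor terms of $\chi$ about $x$. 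Substituting the formal series $y=\W_S(x)$ into the pointwise identity $(\chi_1\cdot\chi_2)(y)=\chi_1(y)\chi_2(y)$ shows $\Phi(\chi_1\cdot\chi_2)=(\Phi\chi_1)\cdot(\Phi\chi_2)$, and plainly $\Phi$ fixes the constant $1$. On the other hand $D_S 1=S_\emptyset=1$, and since $S_\emptyset=1$ and $S$ satisfies the shuffle relations, Proposition~\ref{prop:multiplicative} tells us that $D_S$ is multiplicative as well.

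Next I would verify that $\Phi$ and $D_S$ agree on the coordinate observables $\chi_i:x\mapsto x^i$, $i=1,\dots,d$. Since $\chi_i$ is linear, $\Phi\chi_i(x)$ is simply the $i$-th component of $\W_S(x)$, that is $\sum_{w\in\W} S_w f_w^i(x)$; and $D_S\chi_i(x)=\sum_{w\in\W} S_w D_w\chi_i(x)=\sum_{w\in\W} S_w f_w^i(x)$ because, by the very definition of the word basis functions, $f_w$ is $D_w$ applied componentwise to the identity map, so $D_w\chi_i=f_w^i$. As the polynomial observables on $\R^d$ form exactly the algebra generated by $\chi_1,\dots,\chi_d$, two multiplicative linear operators agreeing on the $\chi_i$ agree on every polynomial; hence $\Phi$ and $D_S$ coincide on all polynomial observables.

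Finally I would remove the polynomial restriction by a jet-truncation argument. Fix $x_0\in\R^d$ and a weight $\nu\in\Nh$. The weight-$\nu$ part of $D_S\chi(x_0)$ is $\sum_{\|w\|=\nu}S_w D_w\chi(x_0)$, and each contributing $w$ has at most $2\nu$ letters, so $D_w$ is a differential operator of order $\le 2\nu$ and $D_w\chi(x_0)$ depends only on the derivatives of $\chi$ at $x_0$ of order $\le 2\nu$. Likewise, in $\Phi\chi(x_0)$ the $r$-th Taylor term $\tfrac{1}{r!}\chi^{(r)}(x_0)[\W_S(x_0)-x_0,\dots,\W_S(x_0)-x_0]$ contributes only to weights $\ge r/2$, so the weight-$\nu$ part again involves only $\chi(x_0),\dots,\chi^{(2\nu)}(x_0)$. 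Thus replacing $\chi$ by its degree-$2\nu$ Taylor polynomial at $x_0$ changes neither weight-$\nu$ part; for that polynomial the two parts are equal by the previous paragraph, hence they are equal for $\chi$, and since $x_0$ and $\nu$ were arbitrary, $\Phi\chi=D_S\chi$ as formal series. The step I expect to need the most care is this last one: one must track the bookkeeping carefully enough to be certain that each graded component of both sides really depends only on a finite jet of $\chi$ (for $D_w\chi$ and for the Taylor terms of $\chi\circ\W_S$ alike), so that the reduction to polynomials is rigorous rather than merely plausible; the multiplicativity and coordinate-function steps are short once Proposition~\ref{prop:multiplicative} is in hand.
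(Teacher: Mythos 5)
Your proposal is correct and follows essentially the same route as the paper's proof: agreement on the coordinate observables by the definition of the word basis functions, extension to monomials via the multiplicativity of \(D_S\) guaranteed by Proposition~\ref{prop:multiplicative} (and the evident multiplicativity of substitution into \(\W_S\)), then to all polynomials by linearity, and finally to smooth \(\chi\) through Taylor polynomials about the base point. Your explicit jet-truncation bookkeeping in the last step is just a more detailed rendering of the paper's concluding sentence, not a different argument.
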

\begin{proof} If \(\chi\) is one of the coordinate mappings \(x\mapsto x^i\), then the result
is true because, by definiton, the \(i\)-th component of the word-basis function \(f_w\) is obtained by
applying \(D_w\) to the \(i\)-th coordinate mapping. If \(\chi\) is a product of coordinate mappings, the
result holds because \(D_S\) acts multiplicatively. By linearity the result is true if \(\chi\) is a
polynomial. Then the result hold for smooth \(\chi\) because it holds for the Taylor polynomials of any degree
of \(\chi\) around any base point \(x\).
\end{proof}

As a direct consequence we may state:

\begin{prop}\label{prop:composition}
Suppose that \(S\in\R\langle\langle \A\rangle\rangle\) has \(S_\emptyset = 1\) and satisfies
the shuffle relations. Then for any \(T\in\R\langle\langle \A\rangle\rangle\), \(\W_T( \W_S(x) )\) coincides
with the words series \(W_{ST}(x)\).
\end{prop}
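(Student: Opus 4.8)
The plan is to deduce the statement directly from the preceding proposition, applied componentwise. Write $\W_S(x)=\sum_{w\in\W}S_w f_w(x)$ and recall that, since $S_\emptyset=1$, we have $\W_S(x)-x=\mathcal{O}(1/2)$, so that for every observable $\chi$ the composite $\chi(\W_S(x))$ has a well-defined expansion graded by weight; by the preceding proposition that expansion equals $D_S\chi(x)$. I would apply this with $\chi$ ranging over the coordinate functions $f_w^1,\dots,f_w^d$ of each vector-valued word basis function $f_w$, $w\in\W$. Since $D_S$ acts componentwise, this yields that the expansion of $f_w(\W_S(x))$ coincides with $(D_S f_w)(x)$, and hence
\[
\W_T(\W_S(x)) \;=\; \sum_{w\in\W} T_w\, f_w(\W_S(x)) \;=\; \sum_{w\in\W} T_w\,(D_S f_w)(x).
\]

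The second step is a bookkeeping computation that uses only the defining property of the word basis functions. Recall that $f_w$ is obtained by applying $D_w$ to the identity map $Id$, i.e.\ $f_w=D_w\,Id$, and that concatenation of words corresponds to composition of operators, $D_{vw}=D_vD_w$. Expanding $D_S=\sum_{v\in\W}S_vD_v$ and using $D_v f_w = D_vD_w\,Id = D_{vw}\,Id = f_{vw}$, I obtain
\[
\W_T(\W_S(x)) \;=\; \sum_{v,w\in\W} S_v T_w\, f_{vw}(x) \;=\; \sum_{u\in\W}\Big(\sum_{vw=u} S_v T_w\Big) f_u(x).
\]
By \eqref{eq:convolution} the inner sum is precisely the coefficient $(ST)_u$ of the concatenation product, so the right-hand side is $\W_{ST}(x)$, which is the assertion.

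The only points needing some care — and which I would check rather than treat as genuine obstacles, since all the analytic content is already supplied by the preceding proposition — are the following. First, the grading throughout is by the combined weight of the letter fields $f_k$ that appear in each term; under this grading $f_w$ carries weight $\|w\|$, a factor $[\W_S(x)-x]^r$ carries weight $\geq r/2$, and $\W_{ST}$ has only finitely many words of each weight, so every sum displayed above has only finitely many contributions in each weight and all rearrangements are legitimate. Second, one must keep track of the order of concatenation: applying $D_v$ to the vector-valued $f_w$ produces $f_{vw}$, hence the product $ST$ and not $TS$ — consistent with the general fact that the pullback of a composition of maps is the reversed composition of the pullbacks.
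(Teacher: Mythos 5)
Your argument is correct and is essentially the paper's own proof: the paper likewise invokes the preceding proposition to write \(f_w(\W_S(x))\) as \(D_S f_w(x)=D_S D_w\, id(x)\) and then identifies the resulting double sum with \(\W_{ST}(x)\). You have merely spelled out the bookkeeping (the identity \(D_v f_w=f_{vw}\), the convolution formula \eqref{eq:convolution}, and the check that the order of the product is \(ST\)), which the paper leaves implicit.
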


\begin{proof} It is enough to note that, for each word basis function \(f_w(x)= D_w id(x)\), according to the preceding proposition,
  \(f_w(W_S(x))\) has the expansion \(D_S f_w(x)=D_SD_w id(x)\).
\end{proof}

The Ito case is completely parallel; the only change is that \( S\in\R\langle\langle \eA\rangle\rangle \) has to
be demanded to satisfy the quasishuffle relations rather than the shuffle relations.

In fact the computations leading to \eqref{eq:localerrorexample} or \eqref{eq:localerrorexampleito} are instances of the composition just described.

\section{Infinitesimal generators}

It is well known that the infinitesimal generators of \eqref{eq:stratonovich} or \eqref{eq:ito} play an
important role in the study of these systems, see e.g.\ \cite[Section 2.5]{pav}. In this section those generators are described in the language
of words. The material has an important implications for  the weak order conditions. We begin with Ito systems.

\subsection{The Ito generator}
For  system \eqref{eq:ito}, we consider the linear combination of \emph{deterministic} letters
\[
\mathfrak{G}= \sum_{\ell\in\eAdet}\ell = \sum_{a\in\Adet} a+\sum_{A\in\Asto} \bar A
\]
and define the exponential \(\exp(h\mathfrak{G})\in\R\langle\langle\eA\rangle\rangle\), \(h\in\R\), as the
series
\[
\emptyset + h \mathfrak{G} +\frac{h^2}{2} \mathfrak{G}^2+\cdots,
\]
where the powers are based on concatenation, e.g.
\[
\mathfrak{G}\mathfrak{G} = \sum_{a,b\in\Adet} ab +\sum_{a\in\Adet,B\in\Asto} a\bar B
+\sum_{A\in\Asto,b\in\Adet} \bar A b+\sum_{A,B\in\Asto} \bar A \bar B
\]
(note that the right-hand side is simply the sum all the words consisting of two deterministic letters from
\(\eA\)). The operator \(D_\mathfrak{G}\) is the \emph{infinitesimal generator} of \eqref{eq:ito},  a linear combination of first and
second order differential operators.
\begin{prop}\label{prop:generator}
The expectations of the Ito iterated integrals are given by \(\E I_w(t_0+h;t_0) = 0\) if \(w\in\overline \W\) has at
least one stochastic letter and \(\E I_w(t_0+h;t_0) = h^n/n!\) if \(w\in\overline \W\) consists of \(n\) deterministic
letters.

The following relation holds:
\[
\E I(t_0+h;t_0) = \exp(h \mathfrak{G}).
\]

For any observable and \(h>0\),
\[
\E\chi(x(t_0+h)) = \exp(h D_\mathfrak{G}) \chi(x_0),
\]
where  \(x(t)\) solves \eqref{eq:ito} with \(x(t_0)=x_0\) and the expectation is conditional on \(x_0\).
\end{prop}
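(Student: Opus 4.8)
The plan is to prove the three assertions in turn: the value of $\E I_w$ by induction on word length, the identity for $\E I$ by a direct summation, and the generator formula by applying $\E$ to the Ito--Taylor expansion \eqref{eq:taylor2ito}.

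First I would compute $\E I_u(T;t_0)$ for arbitrary $T\geq t_0$ by induction on the number $n$ of letters of $u\in\overline\W$, the induction hypothesis being that $\E I_u(T;t_0)=(T-t_0)^{n}/n!$ when all $n$ letters of $u$ are deterministic and $\E I_u(T;t_0)=0$ otherwise. The case $n=0$ is $I_\emptyset=1$. For the inductive step write $w=\ell_n\dots\ell_1$, peel off the rightmost letter $\ell_1$ by the Ito analogue of the recursion \eqref{eq:recurrenceintegrals}, and put $u=\ell_n\dots\ell_2$, so that $I_w(T;t_0)=\int_{t_0}^{T}I_u(s;t_0)\,d\B_{\ell_1}(s)$ (with $d\B_{\ell_1}(s)=ds$ when $\ell_1\in\eAdet$). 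If $\ell_1$ is stochastic this is an Ito integral whose integrand is adapted and, by Proposition~\ref{prop:iteratedintegralsito}, square-integrable on $[t_0,T]$; it is therefore a martingale with zero expectation, which is consistent with the claim since $w$ then contains a stochastic letter. If $\ell_1$ is deterministic, Fubini (again justified by the moment bounds of Proposition~\ref{prop:iteratedintegralsito}) gives $\E I_w(T;t_0)=\int_{t_0}^{T}\E I_u(s;t_0)\,ds$, and the induction hypothesis yields $0$ if $u$ has a stochastic letter and $\int_{t_0}^{T}(s-t_0)^{n-1}/(n-1)!\,ds=(T-t_0)^{n}/n!$ if $u$ is purely deterministic. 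Since removing the rightmost letter leaves a purely deterministic word exactly when $w$ is purely deterministic, the two branches match up and the induction closes; taking $T=t_0+h$ gives the stated values.

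For the second assertion, $\E I(t_0+h;t_0)=\sum_{w\in\overline\W}\E I_w(t_0+h;t_0)\,w$; by the first part only purely deterministic words survive, so this equals $\sum_{n\geq0}\frac{h^n}{n!}\sum_{\ell_1,\dots,\ell_n\in\eAdet}\ell_1\cdots\ell_n$. Since concatenation distributes over finite sums, $\sum_{\ell_1,\dots,\ell_n\in\eAdet}\ell_1\cdots\ell_n=\big(\sum_{\ell\in\eAdet}\ell\big)^n=\mathfrak{G}^n$ (the case $n=2$ is displayed in the text; the general case follows by induction on $n$), so the sum equals $\exp(h\mathfrak{G})$. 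For the third assertion, recall that $D_I=\sum_{w\in\overline\W}I_w(t_0+h;t_0)D_w$ expands the pullback of the solution operator $\varphi_{t_0+h,t_0}$, so that $(D_I\chi)(x_0)=\chi(x(t_0+h))$ when $x(t_0)=x_0$, with the coefficients $I_w$ random and the functions $D_w\chi(x_0)$ deterministic given $x_0$. Taking expectations term by term and using the second assertion together with the homomorphism property $D_{SS'}=D_SD_{S'}$ (hence $D_{\mathfrak{G}^n}=(D_\mathfrak{G})^n$), one gets $\E\chi(x(t_0+h))=\sum_{w}(\E I_w)\,D_w\chi(x_0)=D_{\exp(h\mathfrak{G})}\chi(x_0)=\exp(hD_\mathfrak{G})\chi(x_0)$, the equality being read, as everywhere in the paper, at the level of formal series in $h$ whose truncations supply the successive Taylor approximations.

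The only non-routine ingredients are analytic rather than combinatorial: one needs that the Ito integral occurring in the induction is a true martingale, so that its expectation vanishes, and that $\E$ may be interchanged with the ordinary integral when $\ell_1$ is deterministic. Both are consequences of the integrability and $L^p$ estimates collected in Proposition~\ref{prop:iteratedintegralsito}, so beyond that proposition the argument is bookkeeping; the single subtle point is the compatibility of the two branches of the induction, noted above.
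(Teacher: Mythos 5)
Your proposal is correct and follows essentially the same route as the paper: establish the values of \(\E I_w\) (vanishing when a stochastic letter is present, \(h^n/n!\) for purely deterministic words), observe that summing these is exactly the expansion of \(\exp(h\mathfrak{G})\), and obtain the generator formula by taking expectations term by term in the Ito--Taylor expansion \eqref{eq:taylor2ito}. Your inductive peeling of the rightmost letter, with the martingale and Fubini justifications, merely fills in details the paper leaves implicit (the paper also sketches an alternative semigroup/differential-equation argument for the second claim, which you do not need).
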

\begin{proof}
For the first claim we recall that the expectation of Ito integrals vanishes. In addition it is trivially
computed that, when all the letters in a word  are deterministic,
\(I_w(t_0+h;t_0)=h^n/n!\), where \(n\) represents the number of letters.

By expanding \(\exp(h\mathfrak{G}\) as a series, one sees that the second claim is just a reformulation of the first.   An alternative proof of this second claim is as follows. As noted before
(Proposition~\ref{prop:iteratedintegralsito}), the distribution of the random variable \(I(t_0+h;t_0)\) is independent of \(t_0\)
and therefore we may write \(\E I(t_0+h;t_0)= \E I(h)\). The functions  \(\exp(h\mathfrak{G})\) and \(E I(h)\)
coincide at \(h=0\), where they take the common value \(\emptyset\). By taking expectations in
\eqref{eq:chen}, we find the semigroup relation \(\E(h_1+h_2) = \E I(h_1) \,\E I(h_2) \) for \(h_1,h_2\geq
0\). Differentiating with respect to \(h_1\) and then setting \(h_1=0\), \(h_2=h\) yields the linear, constant
coefficient differential equation \((d/dt)\E(h) =[(d/dh)\E I(0)]\,\E I(h) \).\footnote{This differential
equation in \(\R\langle\langle\eA\rangle\rangle\) is of course just a system of differential equations for the
coefficients \(\E I_w(h)\), \(w\in\overline \W\),  that presents no technical difficulty.}
On the other hand, a straightforward computation leads to \((d/dh)\exp(h\mathfrak{G}) = \mathfrak{G}
\exp(h\mathfrak{G})\), and the proof of the second statement concludes by noting that \((d/dh)\E
I(0)=\mathfrak{G}\) since \(\E I_w(h) = o(h)\) as \(h\downarrow 0\) if  \(w\) has length \(>1\) and all its
letters are deterministic.

For the last claim, just take expectations in  \eqref{eq:taylor2ito}.
\end{proof}

\begin{rem}
\label{rem:moments} The preceding proposition and the quasishuffle relations among the \(I_w\)
(Proposition~\ref{prop:Imultiplicative}) make it possible to compute all the \emph{moments} of the Ito
iterated integrals, as first suggested by Gaines  \cite{gaines}. The easiest example is given by the relation
\(A\bowtie A= 2AA+\bar A\) that leads to \(I_A^2 = 2I_{AA}+I_{\bar A}\); according to the proposition the
expectation of the right-hand side equals \(0+h\) and therefore  \(\E I_A^2= h\), a well known property of the
Brownian increment \(I_A\). The values of \(\E I_\ell^i\), \(\E(I_\ell^i I_{\ell m}^j)\), \(\ell,m \in \eA\),
\(i,j\in\N\), etc. may be computed similarly after writing the appropriate quasishuffles.
\end{rem}

\begin{prop}\label{prop:expectationIIto}
The expectations \(\E I_w(t_0+h;t_0)\), \(w\in\overline\W\) of the Ito iterated integrals satisfy the shuffle
relations.
\end{prop}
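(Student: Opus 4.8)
The plan is to derive the shuffle relations for the numbers $\E I_w(t_0+h;t_0)$ from the explicit description of these expectations established in Proposition~\ref{prop:generator}, so that everything reduces to a one-line identity between binomial coefficients. Concretely, write $S=\E I(t_0+h;t_0)\in\R\langle\langle\eA\rangle\rangle$; by Proposition~\ref{prop:generator} we have $S_\emptyset=1$, $S_w=h^n/n!$ whenever $w\in\overline\W$ is a word consisting of $n$ deterministic letters, and $S_w=0$ whenever $w$ contains at least one stochastic letter. I would emphasize at the outset that it is \emph{not} enough to take expectations in the quasishuffle relations of Proposition~\ref{prop:Imultiplicative}: the expectation is not multiplicative on products of iterated integrals, so those identities for the $I_w$ do not pass to the $\E I_w$ by themselves; the generator formula is precisely what makes the argument work.

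Next I would record the only combinatorial input needed. If $u,v\in\overline\W$ have lengths $m$ and $n$ respectively, then $u\sh v$ is a sum of $\binom{m+n}{m}$ words, counted with multiplicity, each of length $m+n$, and each of them is an interleaving of the letters of $u$ with those of $v$; hence every word occurring in $u\sh v$ has exactly the multiset of letters of $u$ together with those of $v$. In particular such a word contains a stochastic letter if and only if $u$ or $v$ does, and it consists entirely of deterministic letters precisely when both $u$ and $v$ do. This is immediate from the recursive definition \eqref{eq:shufflerecurrence} of the shuffle product, extended verbatim to the extended alphabet.

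With these two ingredients the verification of $(S,u\sh v)=(S,u)(S,v)$ for all $u,v\in\overline\W$ is a short case split. If $u$ or $v$ contains a stochastic letter, then so does every word in $u\sh v$, so $(S,u\sh v)=0$, while also $(S,u)(S,v)=0$ because one factor vanishes. If instead both $u$ and $v$ consist of deterministic letters only, say of lengths $m$ and $n$, then each of the $\binom{m+n}{m}$ words of $u\sh v$ is a deterministic word of length $m+n$ and therefore contributes $h^{m+n}/(m+n)!$ to the left-hand side, so that
\[
(S,u\sh v)=\binom{m+n}{m}\,\frac{h^{m+n}}{(m+n)!}=\frac{h^{m+n}}{m!\,n!}=\frac{h^m}{m!}\cdot\frac{h^n}{n!}=(S,u)(S,v);
\]
the case $m=0$ or $n=0$ is included and merely restates $S_\emptyset=1$. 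Together with $S_\emptyset=1$ this is exactly the assertion that the $\E I_w(t_0+h;t_0)$ satisfy the shuffle relations.

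Since there is no genuine obstacle here, the only points requiring care are the bookkeeping in the second paragraph — that the shuffle product preserves both the length and the multiset of letters of the words involved — and the warning in the first paragraph about why one cannot shortcut through Proposition~\ref{prop:Imultiplicative}. I would also note in passing the conceptual reading of the statement: $S=\exp(h\mathfrak{G})$ is the exponential of a linear combination of letters, hence group-like for the deconcatenation coproduct, which is precisely the shuffle-character property; this viewpoint is made precise in Section~9, but the elementary argument above is self-contained and suffices here.
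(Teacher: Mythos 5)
Your proof is correct and is essentially the paper's own argument: both deduce the result from Proposition~\ref{prop:generator} by the same case split (a stochastic letter forces both sides to vanish; otherwise the $\binom{m+n}{m}$ words in $u\sh v$ each contribute $h^{m+n}/(m+n)!$, matching $(h^m/m!)(h^n/n!)$). Your added remarks on why one cannot simply take expectations in the quasishuffle relations, and on the group-like interpretation, are sensible glosses but do not change the route.
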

\begin{proof} This result is an easy consequence of the Proposition~\ref{prop:generator}.
With the abbreviation \(S=\exp(h \mathfrak{G})\), \((S,u)(S,v)\) and \((S,u\sh v)\) are both \(0\) if either
\(u\) or \(v\) have a stochastic letter. In other case, if \(u\) has \(m\) letters and \(v\) has \(n\),
\((S,u) =h^m/m!\), \((S,v) =h^n/n!\)  while \((S,u\sh v)\) is a sum of \((m+n)!/(m!n!)\) coefficients \(S_w\)
each of them with value \(h^{m+n}/((m+n)!\).
\end{proof}
\subsection{Weak order conditions in the Ito case}

We now turn to the series of expectations associated with a splitting integrator specified by the pullback series
\[
\widetilde I(t_0+h;t_0) = I^{(1)}(t_0+d_1h; t_0+c_1h)\cdots I^{(m)}(t_0+d_mh; t_0+c_mh).
\]

In general, the equality
\begin{equation}\label{eq:hypothesis}
\E \widetilde I(t_0+h;t_0) = \E I^{(1)}(t_0+d_1h;
t_0+c_1h)\cdots \E I^{(m)}(t_0+d_mh; t_0+c_mh)
\end{equation}
will \emph{not} hold because the \(I^{(i)}(t_0+d_ih; t_0+c_ih)\) are \emph{not independent}. However, as it
will shortly become clear, \eqref{eq:hypothesis} will typically be satisfied. We first present some examples
that will help to understand the situation.

Assume  that the alphabet \(\A\) consists of two letters \(a\) and \(A\). Choose a partition of the interval
\([0,1]\)
\[
0=c_1^\prime < d_1^\prime=c_2^\prime < d_2^\prime = c_3^\prime < \cdots
< d_{\nu-1}^\prime=c_{\nu}^\prime<d_\nu^\prime=1
\]
and let \(f_A\) act in the intervals \([t_0+c_i^\prime h,t_0+d_i^\prime h]\), while the deterministic \(f_a\) may
act on any set of intervals. In this case \eqref{eq:hypothesis} holds because the Brownian motion \(\B_A\)
acts on nonoverlapping intervals. This example may be easily extended to the case where there are additional
deterministic fields \(f_b\), \(f_c\), \dots; in the split systems some of them could be grouped with \(f_a\)
and some of them grouped with \(f_A\).

As a second example, assume that \(\A = \{A,B\}\) and use Strang's splitting with \(f_A\) acting first. Here
\(I^{(1)}\) and \(I^{(3)}\) are independent because  their intervals do not overlap, while the pairs
\(I^{(1)}\), \(I^{(2)}\) and \(I^{(2)}\), \(I^{(3)}\) are independent because they use independent Brownian
motions. Again this example may be easily generalized by adding additional deterministic and/or stochastic
letters.

We have the following general result:
\begin{lemma}\label{lem:abril}
Assume  that \(\Asto\neq \emptyset\) so that \eqref{eq:ito} does not degenerate into a deterministic
differential equations. If a splitting integrator for \eqref{eq:ito} has strong order \(> 0\) (i.e. \(\geq 1/2\), then
\eqref{eq:hypothesis} holds.
\end{lemma}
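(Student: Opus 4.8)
The plan is to exploit the fact that strong order $>0$ forces each stochastic field $f_A$, $A\in\Asto$, to act on a \emph{single} interval inside the composition defining $\widetilde I(t_0+h;t_0)$, up to the freedom in the deterministic fields. First I would recall that the strong order-$1/2$ conditions require $\widetilde I_A(t_0+h;t_0)=I_A(t_0+h;t_0)$ for every stochastic letter $A$; since $\widetilde I_A$ is, by construction, a sum of increments of $\B_A$ over the intervals $[t_0+c_ih,t_0+d_ih]$ corresponding to those split systems $\mathcal S_i$ that contain $f_A$, and $I_A(t_0+h;t_0)=\B_A(t_0+h)-\B_A(t_0)$, the only way these random variables can coincide for all realizations of Brownian paths is that the collection of intervals on which $\B_A$ acts, taken with multiplicity (i.e.\ counting each $\mathcal S_i\ni f_A$), telescopes exactly to $[t_0,t_0+h]$ — that is, the intervals are contiguous, nonoverlapping, and cover $[t_0,t_0+h]$. (Here I would use that the increments of $\B_A$ over disjoint intervals are independent and nonconstant, so a linear combination with nonzero coefficients other than the telescoping one cannot be a.s.\ equal to the full increment.)

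Next I would argue that, having fixed this telescoping structure for each $A$, the factors $I^{(i)}(t_0+d_ih;t_0+c_ih)$ become jointly independent. The key observation: two factors $I^{(i)}$ and $I^{(j)}$ fail to be independent only if there is some stochastic letter $A$ appearing in both $\mathcal S_i$ and $\mathcal S_j$ \emph{and} their time intervals overlap. But for a fixed $A$, the intervals of the split systems containing $f_A$ are nonoverlapping by the previous paragraph; and if $A\in\mathcal S_i$ while $A\notin\mathcal S_j$, then the $\B_A$-increment entering $I^{(i)}$ is independent of everything in $I^{(j)}$; finally, distinct Brownian motions $\B_A$, $\B_B$ are independent. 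Assembling these cases over all pairs $(i,j)$ and all stochastic letters shows the whole family $\{I^{(i)}\}_{i=1}^m$ is independent, whence $\E\widetilde I = \prod_i \E I^{(i)}$, which is precisely \eqref{eq:hypothesis}.

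The main obstacle I anticipate is the bookkeeping in the first step: when a given $f_A$ appears in several split systems $\mathcal S_i$ (as is allowed — e.g.\ in Strang-type compositions with $\mathcal S_1=\mathcal S_3$), one must be careful that the increments add \emph{with the correct signs} (all $+1$, since $c_i<d_i$ is enforced for stochastic systems) and that no cancellation or partial overlap can reproduce the full increment $\B_A(t_0+h)-\B_A(t_0)$ by accident. The clean way to handle this is to note that each $\widetilde I_A$ is a linear combination $\sum_{i:\,A\in\mathcal S_i}\big(\B_A(t_0+d_ih)-\B_A(t_0+c_ih)\big)$ with all coefficients $1$, and then to decompose $[t_0,t_0+h]$ into the finitely many atoms generated by the endpoints $\{c_ih,d_ih\}$; the increment of $\B_A$ over each atom appears in $\widetilde I_A$ with an integer multiplicity equal to the number of covering intervals, and equality with $I_A$ over all Brownian realizations forces every such multiplicity to be exactly $1$. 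This gives the nonoverlapping-covering conclusion rigorously and feeds directly into the independence argument of the second step.
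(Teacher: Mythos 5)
Your proposal is correct and follows essentially the same route as the paper: the strong order-$1/2$ condition for each stochastic letter $A$ forces the sum of the Brownian increments over the subintervals on which $f_A$ acts to equal the full increment, hence those intervals cover $[t_0,t_0+h]$ without overlap, and the resulting independence of the factors $I^{(i)}$ yields \eqref{eq:hypothesis}. Your atom-multiplicity argument and the explicit assembly of the independence step merely flesh out details the paper states more tersely (or leaves implicit via the preceding examples).
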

\begin{proof} As noted at the end of
Section~\ref{sec:ordercond},  the Ito strong order conditions with \(\mu= 1/2\) must be satisfied. Now for
each \(A\in\Asto\) the strong order condition corresponding to  \(A\), shows that
\(\sum_jI_A(t_0+d_{i_j}h;t_0+c_{i_j})=I_A(t_0+h;t_0)\), where the sum is extended to  all partial systems that
include \(f_A\). This implies that, for each fixed \(A\), the corresponding intervals \([c_{i_j},d_{i_j}]\)
cover the interval \([0,1]\) and therefore cannot overlap.
\end{proof}

Schemes that satisfy  \eqref{eq:hypothesis} have the special properties that we study next. To begin with,
 Lemma~\ref{lem:shuffle} clearly implies:
\begin{prop}\label{eq:shuffleexpnumerico}For splitting integrators for \eqref{eq:ito} that satisfy \eqref{eq:hypothesis}, the expectations coefficients
\(\E\widetilde I_w(t_0+h;t_0)\) satisfy the shuffle conditions.
\end{prop}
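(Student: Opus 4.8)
The plan is to deduce the statement from Lemma~\ref{lem:shuffle} by exploiting the factorization of $\E\widetilde I$ furnished by the hypothesis \eqref{eq:hypothesis}. First I would observe that, by definition, the pullback series of the integrator is the concatenation product
\[
\widetilde I(t_0+h;t_0) = I^{(1)}(t_0+d_1h;t_0+c_1h)\cdots I^{(m)}(t_0+d_mh;t_0+c_mh)
\]
in $\R\langle\langle\eA\rangle\rangle$, and that \eqref{eq:hypothesis} asserts precisely that taking expectations commutes with this product, so that
\[
\E\widetilde I(t_0+h;t_0) = \E I^{(1)}(t_0+d_1h;t_0+c_1h)\cdots \E I^{(m)}(t_0+d_mh;t_0+c_mh)
\]
is again a concatenation product of $m$ elements of $\R\langle\langle\eA\rangle\rangle$.

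Next I would check that each factor $\E I^{(i)}(t_0+d_ih;t_0+c_ih)$ satisfies the hypotheses of Lemma~\ref{lem:shuffle}: its $\emptyset$-coefficient is $1$ (because $I^{(i)}_\emptyset=1$), and it satisfies the shuffle relations. The latter is Proposition~\ref{prop:expectationIIto}, which is stated there for the interval $(t_0,t_0+h)$ but whose proof applies verbatim to any subinterval $(t_0+c_ih,t_0+d_ih)$: the analogue of Proposition~\ref{prop:generator} gives $\E I^{(i)}_w=0$ whenever $w$ has a stochastic letter and $\E I^{(i)}_w=((d_i-c_i)h)^n/n!$ when $w$ consists of $n$ deterministic letters, and the shuffle relations $(\E I^{(i)},u\sh v)=(\E I^{(i)},u)(\E I^{(i)},v)$ follow from this by the same counting argument used in the proof of Proposition~\ref{prop:expectationIIto}. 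One also has to keep in mind that a split system $\mathcal{S}_i$ may involve only some of the letters of $\eA$; but if a word $u$ contains a letter absent from $\mathcal{S}_i$, every word occurring in $u\sh v$ contains that letter as well, so both sides of the shuffle relation vanish and nothing is lost by regarding $\E I^{(i)}$ as an element of the full algebra $\R\langle\langle\eA\rangle\rangle$. I would also note that Lemma~\ref{lem:shuffle}, though phrased for $\R\langle\langle\A\rangle\rangle$, transfers word for word to $\R\langle\langle\eA\rangle\rangle$, since its proof uses only deconcatenation and the recursion \eqref{eq:shufflerecurrence}, neither of which depends on the chosen alphabet.

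Finally I would conclude by a straightforward induction on $m$: Lemma~\ref{lem:shuffle} shows that a concatenation product of two series with $\emptyset$-coefficient $1$ satisfying the shuffle relations is again such a series, hence the $m$-fold product $\E\widetilde I(t_0+h;t_0)$ has $\emptyset$-coefficient $1$ and satisfies the shuffle relations; reading off coefficients yields $(\E\widetilde I,u\sh v)=(\E\widetilde I,u)(\E\widetilde I,v)$ for all $u,v\in\overline\W$, which is the assertion. There is no genuinely hard step here, since the combinatorial content is already packaged in Lemma~\ref{lem:shuffle} and Proposition~\ref{prop:expectationIIto}; the only point requiring a line of care is the (routine) extension of Proposition~\ref{prop:expectationIIto} from the reference interval to the subintervals $(t_0+c_ih,t_0+d_ih)$ attached to the individual split systems.
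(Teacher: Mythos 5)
Your proof is correct and follows the paper's intended route exactly: factor $\E\widetilde I$ via \eqref{eq:hypothesis}, apply Proposition~\ref{prop:expectationIIto} to each factor $\E I^{(i)}$, and conclude with (the $\eA$-version of) Lemma~\ref{lem:shuffle} by induction on $m$. The paper compresses all of this into ``Lemma~\ref{lem:shuffle} clearly implies''; the extra care you take with the subintervals and the sub-alphabets of the split systems is exactly the right bookkeeping and introduces nothing new.
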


In turn this result and Proposition~\ref{prop:expectationIIto} show that the weak order conditions are not
independent when \eqref{eq:hypothesis} holds. For instance the weak order condition for \(\ell\ell\) is
implied by the weak order condition for \(\ell\in\overline W\), since, as noted repeatedly,
\(\ell\sh\ell=2\ell\ell\).

In the next proposition we need  the \emph{deterministic} system:
\begin{equation}\label{eq:deter}
dx =\sum_{a\in\Adet} f_a(x)\, dt+\sum_{A\in\Asto} f_A(x)\, dt,
\end{equation}
obtained by replacing the differentials \(d\B_A\) in the Ito system \eqref{eq:ito} by \(dt\). It is clear that
each splitting algorithm for \eqref{eq:ito} defines a splitting algorithm for \eqref{eq:deter} and vice versa.
\begin{prop} \label{prop:sameorder}
For splitting integrators for \eqref{eq:ito} that satisfy \eqref{eq:hypothesis}  and in the general
 vector field scenario, the following properties are equivalent:
 \begin{itemize}
\item The weak order  conditions \eqref{eq:weakordercondito}  hold  for a positive integer \(\sigma\).
\item When applied to the deterministic system \eqref{eq:deter}, the integrator has local error \(\mathcal{O}(h^{\sigma+1})\).
\end{itemize}
\end{prop}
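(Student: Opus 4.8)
The plan is to show that, under hypothesis \eqref{eq:hypothesis}, \emph{both} items are equivalent to one and the same algebraic identity between products of exponentials of sums of letters — the identity living in $\R\langle\langle\eA\rangle\rangle$ for the Ito system and in $\R\langle\langle\A\rangle\rangle$ for the deterministic system \eqref{eq:deter} — and then to observe that these two identities are carried into one another by a length-preserving relabeling of letters. So the proof is, in essence, a reduction of each item to a statement about the expectations of Chen series, followed by a bijection of alphabets.

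First I would handle the Ito side. Since $I^{(i)}(t_0+d_ih;t_0+c_ih)$ is the Chen series of the $i$-th split system over the interval $[t_0+c_ih,t_0+d_ih]$, Proposition~\ref{prop:generator} (applied to that subsystem and that subinterval) gives $\E I^{(i)}(t_0+d_ih;t_0+c_ih)=\exp\bigl((d_i-c_i)h\,\mathfrak{G}_i\bigr)$, where $\mathfrak{G}_i$ is the sum of the deterministic letters of the extended alphabet of the $i$-th split system (the $a\in\Adet$ occurring in it together with the letters $\bar A$ coming from its stochastic fields). Hypothesis \eqref{eq:hypothesis} then yields $\E\widetilde I(t_0+h;t_0)=\prod_{i=1}^m\exp\bigl((d_i-c_i)h\,\mathfrak{G}_i\bigr)$, while Proposition~\ref{prop:generator} gives $\E I(t_0+h;t_0)=\exp(h\mathfrak{G})$. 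Both series are supported on words all of whose letters lie in $\eAdet$, and on such words the weight coincides with the length; moreover $\E I_w=\E\widetilde I_w=0$ as soon as $w$ contains a stochastic letter. Hence the weak order conditions \eqref{eq:weakordercondito} for the positive integer $\sigma$ are equivalent to
\[
\prod_{i=1}^m\exp\bigl((d_i-c_i)h\,\mathfrak{G}_i\bigr)\equiv\exp(h\mathfrak{G})\pmod{\text{words of length}>\sigma}.
\]

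Next I would treat \eqref{eq:deter} the same way. Its exact solution operator has the trivial Chen series $\sum_w(h^{|w|}/|w|!)\,w=\exp(h\mathfrak{D})$, where $\mathfrak{D}=\sum_{\ell\in\Adet\cup\Asto}\ell$, and its splitting integrator (same subsystems $\mathcal{S}_i$, same constants $c_i,d_i$) has Chen series $\prod_{i=1}^m\exp\bigl((d_i-c_i)h\,\mathfrak{D}_i\bigr)$, with $\mathfrak{D}_i$ the sum of the letters of $\mathcal{S}_i$. The word series of the deterministic local error is $\sum_w(\widetilde J^{\mathrm{det}}_w-J^{\mathrm{det}}_w)f_w(x_0)$, and the coefficient attached to a word of length $n$ is a constant times $h^n$; therefore, in the general vector field scenario, Lemma~\ref{lem:independence} (independence of the $f_w$, realized by a suitable choice of dimension and fields for each individual word) shows that this local error is $\mathcal{O}(h^{\sigma+1})$ if and only if $\widetilde J^{\mathrm{det}}_w=J^{\mathrm{det}}_w$ for every word of length $\le\sigma$, i.e.\ if and only if
\[
\prod_{i=1}^m\exp\bigl((d_i-c_i)h\,\mathfrak{D}_i\bigr)\equiv\exp(h\mathfrak{D})\pmod{\text{words of length}>\sigma}.
\]

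Finally I would close the loop: the assignment $a\mapsto a$ for $a\in\Adet$ and $\bar A\mapsto A$ for $A\in\Asto$ extends to a length-preserving isomorphism of algebras from the subalgebra of $\R\langle\langle\eA\rangle\rangle$ generated by $\eAdet$ onto $\R\langle\langle\Adet\cup\Asto\rangle\rangle$; it carries $\mathfrak{G}_i$ to $\mathfrak{D}_i$ and $\mathfrak{G}$ to $\mathfrak{D}$, hence the first displayed identity to the second, and, being length-preserving, it respects the truncation modulo words of length $>\sigma$. This establishes the equivalence of the two displayed identities, and therefore of the two items. I expect no deep obstacle here; the delicate points are purely bookkeeping ones: identifying correctly the generator $\mathfrak{G}_i$ of each split system and the role played in it by the letters $\bar A$, checking that $\E\widetilde I$ and $\E I$ really are supported on purely deterministic words (so that \eqref{eq:weakordercondito} collapses exactly to the length-$\le\sigma$ truncation), and invoking the general vector field hypothesis at precisely the point where it is needed, namely to pass from an $\mathcal{O}(h^{\sigma+1})$ local error to the vanishing of the individual error coefficients via Lemma~\ref{lem:independence}.
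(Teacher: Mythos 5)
Your argument is correct and is essentially the paper's own proof: both reduce each item, via Proposition~\ref{prop:generator} and hypothesis \eqref{eq:hypothesis}, to the identity $\prod_i\exp\bigl(h(d_i-c_i)\mathfrak{G}^{(i)}\bigr)=\exp(h\mathfrak{G})$ truncated at words with $\le\sigma$ letters. The only differences are cosmetic: the paper treats \eqref{eq:deter} directly on the extended alphabet (reusing $\bar A$ as the deterministic letter for $f_A$) so your relabeling isomorphism is not needed, and you spell out the appeal to Lemma~\ref{lem:independence} for the necessity direction, which the paper leaves implicit.
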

\begin{proof} From \eqref{eq:hypothesis} and Proposition~\ref{prop:generator}
\[
\E \widetilde I(t_0+h;t_0) = \exp\Big(h (d_1-c_1) \mathfrak{G}^{(1)}\Big)\cdots
\exp\Big(h (d_m-c_m) \mathfrak{G}^{(m)}\Big),
\]
where the \(\mathfrak{G}^{(i)}\) are the generators of the partial systems and therefore sums of deterministic
letters. Condition \eqref{eq:weakordercondito}, requires that, in the series in
the last display, the terms corresponding to words with \(\leq \sigma\) letters coincide with those of
\[
\E I(t_0+h;t_0) = \exp(h \mathfrak{G}).
\]

To study the order for \eqref{eq:deter} we may also use words seeing a deterministic system as the particular
case of Ito system where there is no stochastic letter. If we denote by \(\bar A\) the (deterministic) letter
associated with the field \(f_A\), we then have
\[
\widetilde I(t_0+h;t_0) = \exp\Big(h (d_1-c_1) \mathfrak{G}^{(1)}\Big)\cdots
\exp\Big(h (d_m-c_m) \mathfrak{G}^{(m)}\Big),
\]
and
\[
I(t_0+h;t_0) = \exp(h \mathfrak{G}),
\]
and  order \(\sigma\) requires that the terms involving words with \(\sigma\) or fewer letters in the series
in the last two displays coincide.
\end{proof}

The following counterexample shows that, in the last two propositions,  hypothesis \eqref{eq:hypothesis}
cannot be dispensed with. For the alphabet \(\A = \{a,A\}\), we consider the integrator
\[
\varphi^{(A)}_{t_0+h/2,t_0}\circ \varphi^{(a)}_{t_0+h,t_0}\circ \varphi^{(A)}_{t_0+h/2,t_0}.
\]
While this is admittedly a contrived example, using the interval \([t_0,t_0+h/2]\) to finish the step (rather
than the more natural \([t_0+h/2,t_0+h]\)) may have some appeal. On the one hand the distribution of the
iterated integrals in \([t_0,t_0+h/2]\) is the same as that in \([t_0+h/2,t_0+h]\)) and, on the other hand,
working twice with \([t_0,t_0+h/2]\) may make it  possible to reuse Brownian increments. For this integrator
the hypothesis \eqref{eq:hypothesis} does not hold. A simple computation, similar to that preceding
\eqref{eq:localerrorexampleito}, yields
\[\widetilde I(t_0+h;t_0) = 1\emptyset +2 I_A A+ I_aa+ [I_A^2+2I_{AA}]AA+I_{\bar A}\bar A+\mathcal{O}(3/2)
\]
(the iterated integrals in the right-hand side are over \([t_0,t_0+h/2]\)). We note in relation with
Lemma~\ref{lem:abril} that here the order condition for \(A\) is obviously not satisfied. Taking expectations
in the last display,
\[
\E \widetilde I(t_0+h;t_0) = 1\emptyset+ 0A+ha+\frac{h}{2}AA+h\bar A+\mathcal{O}(2).
\]
Since \(0^2\neq 2\times h/2\), for the expectations, the shuffle relation corresponding to \(A\sh A = 2AA\)
does not hold. On the other hand, from Proposition~\ref{prop:generator},
\[
\E I(t_0+h;t_0) = 1\emptyset + 0 A+ha+ 0AA+h \bar A+\mathcal{O}(2)
\]
so that weak order conditions for \(\sigma = 1\) are \emph{not} satisfied. In the deterministic case the algorithm coincides with
Strang's splitting with local errors \(\mathcal{O}(h^3)\) (i.e. \(\sigma =2\)). Thus the weak order does not coincide with the
deterministic order.

 It turns out that, in the general system scenario, under \eqref{eq:hypothesis}, there is an order barrier:
 the weak order
 cannot be better than \(\sigma =2\).

\begin{theo}\label{th:barrier}
Assume that \eqref{eq:hypothesis} holds. There is no splitting integrator for \eqref{eq:ito} with weak order
\(\sigma \geq 3\).
\end{theo}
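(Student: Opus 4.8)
The plan is to reduce the stochastic statement to a purely deterministic one and then invoke the classical order barrier for splitting methods with sign‑constrained coefficients. First I would use Proposition~\ref{prop:sameorder}: under \eqref{eq:hypothesis}, having weak order $\sigma$ is equivalent to the splitting integrator, regarded as a deterministic integrator for the auxiliary system \eqref{eq:deter} (where each $f_A\,d\B_A$ is read as a drift $f_A\,dt$ and $D_{\bar A}$ becomes the ordinary Lie operator of $f_A$), having classical order $\sigma$. So it suffices to show that no such deterministic splitting can reach order $3$. The feature that makes this possible — and that distinguishes it from the well‑known deterministic situation, where arbitrarily high order is attainable — is that a block $\varphi^{(i)}_{t_0+d_ih,\,t_0+c_ih}$ coming from a partial system $\mathcal S_i$ containing a stochastic field must satisfy $d_i-c_i>0$: the stochastic flows are used only forward in time. (We may assume $\eAdet$ has at least two letters; otherwise \eqref{eq:deter} has a single summand and there is nothing to split.)

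Next I would pass to a minimal non‑commutative model by a judicious choice of vector fields, which is legitimate in the general‑system scenario and, by Lemma~\ref{lem:independence} applied with the letters of $\eAdet$ treated as ordinary first‑order letters, reduces the order conditions to formal identities in $\R\langle\langle\eA\rangle\rangle$. Fix a stochastic letter $A_0\in\Asto$; if $\Adet\neq\emptyset$ pick also $a_0\in\Adet$, otherwise pick a second stochastic letter. Put $f_{A_0}=Y$ arbitrary, concentrate the remaining deterministic part in a single field $X$ assigned to $a_0$ (or to the second stochastic letter), set all other fields to zero, and choose $X,Y$ so that $X,Y,[X,Y],[X,[X,Y]],[Y,[X,Y]]$ are linearly independent (a free step‑$3$ nilpotent realisation). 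After discarding the blocks that have become the identity, the integrator becomes a composition of time‑$h$ flows of vector fields of the form $p_iX+q_iY$, with $q_i\ge 0$ for every block (and $q_i=0$ only for the blocks with no stochastic field), since every stochastic block has $d_i-c_i>0$. Imposing the order‑$1$, order‑$2$ and order‑$3$ conditions then amounts to $\sum_i p_i=\sum_i q_i=1$, to the vanishing of the $[X,Y]$‑coefficient, and to the vanishing of the $[X,[X,Y]]$‑ and $[Y,[X,Y]]$‑coefficients of the logarithm of the composition.

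I would finish by showing these equations are incompatible with $q_i\ge0$. Restricted to pure blocks ($p_iq_i=0$) this is exactly the classical fact that a splitting of a two‑term field with one of the two flows taken only forward cannot exceed order two (Sheng; Goldman–Kaper; Blanes–Casas); it may also be re‑derived inside the present formalism by writing the logarithm of $\prod_i\exp(h(p_iX+q_iY))$ up to words of length $3$, expressing the $[Y,[X,Y]]$‑coefficient (after using the order‑$1$ and order‑$2$ relations) as $\tfrac1{12}$ minus a quadratic form in the partial sums $\sum_{j\ge i}q_j$ weighted by the $q_i$, and noting this quantity is bounded away from $0$ whenever $q_i\ge0$ and $\sum_iq_i=1$.

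The genuinely new point — and the step I expect to be the main obstacle — is the presence of the \emph{compound} blocks $\exp(h\delta_i(X+Y))$, $\delta_i>0$, which occur whenever a partial system groups $f_{A_0}$ with another surviving field and cannot in general be removed by further specialisation without collapsing the model to a commutative one. One cannot simply replace a compound block by a Strang substitute, since that perturbs the scheme at order $h^3$. Instead I would keep the compound blocks and track their contribution to the critical $[Y,[X,Y]]$‑coefficient directly in the word expansion; the claim to verify is that a compound block of weight $\delta_i$ contributes to that coefficient exactly as a pure forward $Y$‑block of weight $\delta_i$ would, up to terms already controlled by the order‑$2$ relation, so that the quadratic form above still has all its weights non‑negative and the obstruction persists. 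This sign bookkeeping for the compound blocks is the only part not already contained in the classical theory.
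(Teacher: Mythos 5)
Your overall route is the same as the paper's: use Proposition~\ref{prop:sameorder} (together with the necessity of the weak order conditions for \(\sigma=3\)) to convert weak order \(\geq 3\) into deterministic order \(\geq 3\) for the splitting applied to \eqref{eq:deter}, and then contradict this with the forward-time constraint \(c_i<d_i\) on the stochastic blocks. The paper finishes at that point by citing the known deterministic result that splitting schemes of order higher than two require negative coefficients \cite{order barrier}; you instead try to re-derive that barrier inside the word/BCH formalism, and you correctly sense that the \emph{compound} blocks are where the classical two-operator computation does not apply verbatim. The problem is that the resolution you propose for them is not only left unverified but is untenable as stated. In your own minimal model (only \(X=f_{a_0}\) and \(Y=f_{A_0}\) nonzero) a compound block \emph{is} the exact flow \(\exp(h\delta_i(X+Y))\) of the reduced system. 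Consequently a scheme whose reduction is, say, \(\exp\bigl(\tfrac{h}{2}(X+Y)\bigr)\exp\bigl(\tfrac{h}{2}(X+Y)\bigr)\) — which arises from a perfectly legitimate splitting of a larger alphabet once the other fields are set to zero — satisfies the order-\(1\), \(2\) and \(3\) conditions in that model with all \(q_i>0\): the critical \([Y,[X,Y]]\)-coefficient vanishes exactly. Hence a compound block cannot "contribute to that coefficient exactly as a pure forward \(Y\)-block of weight \(\delta_i\)", and no sign bookkeeping within this two-field specialisation can produce the obstruction; zeroing the remaining fields has collapsed the genuine splitting into (a concatenation of) exact flows, and the order-\(3\) failure is simply invisible there.

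The repair is to use the general-system scenario differently: choose the specialisation \emph{as a function of the subsets} \(\mathcal{S}_i\), so that each distinct subset-field acts as an independent operator — for instance, set to zero the fields shared between subsets (in particular one may set \(f_A\equiv 0\) for a stochastic letter appearing in several subsets; the constraint \(c_i<d_i\) is a property of the scheme and survives this specialisation) — and then the composition becomes an ordinary splitting into two or more independent operators in which all coefficients attached to (at least) one operator are nonnegative. That is exactly the situation covered by the deterministic negative-coefficient theorem, and it is what the paper's one-line citation of \cite{order barrier} after Proposition~\ref{prop:sameorder} encapsulates; if you want a self-contained proof, the quadratic-form argument you sketch should be run in that specialised multi-operator setting, not with compound blocks mimicking pure \(Y\)-blocks. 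A further, smaller gap: the necessity of the Ito weak order conditions for \(\sigma=3\) does not follow from Lemma~\ref{lem:independence}, because the letters \(\bar A\) are tied to the second-order operators \eqref{eq:differentialoperatorbis} built from \(f_A\) and cannot be assigned fields freely; the paper invokes this necessity as a separate (unreproduced) result, and your argument needs it just as much.
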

\begin{proof}  By contradiction. As noted at the end of Section~\ref{sec:ordercond},
 the Ito weak conditions with \(\sigma =3\) holds. From Proposition~\ref{prop:sameorder} the algorithm
 is of order \(\geq 3\) for deterministic problems,  which is known to be contradictory with the
condition  \(c_{i_j} < d_{i_j}\) \cite{order barrier}.
\end{proof}

\begin{rem}In the deterministic case this order barrier may be overcome by using complex coefficients; a full discussion of the relevant literature may be seen in \cite[Section 6.3.3]{blacalibro}. To our best knowledge complex coefficients
have not yet been tested in the stochastic scenario.
\end{rem}

\subsection{The Stratonovich generator}

We briefly outline how the preceding material has to be modified in the Stratonovich case. The expression for the generator is
\[
\mathfrak{G} = \sum_{a\in\Adet} a+\frac{1}{2} \sum_{A\in\Asto}AA\in\R\langle \A\rangle,
\]
and, in analogy with Proposition~\ref{prop:generator}, we have
\begin{equation}\label{eq:bk}
\E J(t_0+h;t_0) = \exp(h \mathfrak{G}),
\end{equation}
a formula that may be proved by showing, as in the Ito case, that the left- and right-hand sides satisfy the same initial value problem. As a consequence, one obtains the following formula for the expectation of observables:
\[
\E\chi(x(t_0+h)) = \exp(h D_\mathfrak{G}) \chi(x_0).
\]

Taking the coefficient of the word \(w\in\W\) in \eqref{eq:bk} gives the value of the expectations of the iterated integrals. Clearly \(\E J_w(t_0+h;t_0) = 0\) if \(w\) is not a
concatenation of deterministic letters \(a\in\Adet\) and pairs \(AA\), \(A\in\Asto\) (examples include \(AAA\) or \(ABAB\) if \(A\neq B\)). When \(w\) is such a concatenation, it is easily shown that
\[
\E J_w(t_0+h;t_0)  = \frac{1}{2^{\pi(w)}}\frac{h^{\| w\|}}{\| w\|!}
\]
where \(\pi(w)\) is the number of pairs that enter in the concatenation (for instance for \(AAaBBAA\), \(\pi=3\) and for \(AAAA\), \(\pi =2\)).
 Once the expectations \(\E J_w(t_0+h;t_0)\) are known, the shuffle relations in
 Proposition~\ref{prop:multiplicative} may be used to compute higher \emph{moments} of the iterated integrals,
 similarly to what was explained in Remark~\ref{rem:moments}.

As distinct from the \(\E I_w\), \(w\in\overline \W\), studied in Proposition~\ref{prop:expectationIIto}, the
 \(\E J_w\), \(w\in\W\), do not satisfy the shuffle relations
 (except of course in the degenerate  case where \(\Asto=\emptyset\)).

For integrators that satisfy the obvious analogue of \eqref{eq:hypothesis}, Proposition~\ref{prop:sameorder}
also holds in the Stratonovich case and therefore the order barrier in Theorem~\ref{th:barrier} also applies to the Stratonov\-ich interpretation.

\section{Relating the Stratonovich and Ito interpretations}
\label{sec:is}

In this paper, the Stratonovich and Ito theories have been developed in parallel.
It is well known that it is actually possible to map one into the other and we now present how
to do so by means of words.

\subsection{Relating the Stratonovich and Ito iterated integrals}

Along with the extended alphabet \(\eA\) that we used to carry out the Ito-Taylor expansion, let us now consider a new alphabet \(\A^\star\) that consists of all the deterministic letters  \(a\in\Adet\), all the stochastic letters \(A\in\Asto\) and, in addition, a deterministic letter \(A^\star\) associated with each \(A\in\Asto\). After setting \(\circ d\B_\ell(s) = ds\)
for all deterministic letters, we may define, via \eqref{eq:recurrenceintegrals}, Stratonovich iterated integrals \(J_w\) for each \(w\in\W^\star\), where \(\W^\star\) denotes the set of words for the alphabet \(\A^\star\). Note that this set of iterated integrals is different from that used to write the Stratonovich-Taylor expansion in \eqref{eq:taylor2}--\eqref{eq:taylornusol} because \(\W^\star\) is a larger set than \(\W\). With the \(J_w\), \(w\in \W^\star\), we construct the Chen series
\[
J^\star = \sum_{w\in\W^\star}J_w(t_0+h;t_0) w.
\]

The results in this section require the use of two mappings \(\theta\) and \(\rho\) that we introduce now. We
define \(\theta:\R\langle\langle \A^\star\rangle\rangle\rightarrow \R\langle\langle \eA\rangle\rangle\) as
follows. For letters, we set \(\theta (a) = a\) for \(a\in\Adet\), \(\theta (A)= A\) for \(A\in\Asto\) and
\(\theta (A^\star)= \bar A-(1/2)AA\) for \(A\in\Asto\). For words, we set \(\theta(\emptyset) = \emptyset\)
and \(\theta (\ell_1\dots\ell_n)= \theta\ell_1 \cdots \theta\ell_n\). We note that, for each \(w\in\W^\star\),
\(\theta (w)\) is a linear combination of words of weight \(\|w\|\). Finally, we set \(\theta (\sum_w S_w w)
=\sum_w S_w\theta w\). Clearly \(\theta\) is linear and in addition is an algebra morphism, i.e.\ maps the
concatenation \(S_1S_2\) into the concatenation \(\theta (S_1) \theta (S_2)\).

We next define a bilinear mapping \(\R\langle\langle \A^\star\rangle\rangle\times\R\langle \eA\rangle\rightarrow \R \) as in \eqref{eq:bilinear} and
define \(\rho:\R\langle \eA\rangle\rightarrow \R\langle \A^\star\rangle\) by demanding
\[
\big(\theta(S), p\big) = \big(S,\rho (p)\big)
\]
for each \(S\in \R\langle\langle \A^\star\rangle\rangle\) and each \(p\in \R\langle \eA\rangle\); thus \(
\rho\) is the linear map obtained from \(\theta\) by transposition with respect to  \((\cdot,\cdot)\). As an
example of the computation of \(\rho\), let us find \(\rho (AA)\). By definition, \(\theta (A^\star) = \bar
A-(1/2)AA\) and \(\theta (AA) = \theta (A)\theta (A)=AA\); for words \(w\) other than \(A^\star\) and \(AA\),
\((\theta(w), AA) = 0\) and therefore \(\rho (AA) = AA-(1/2) A^\star\). In general
\[\rho(w) = w+\sum_u \left( -\frac{1}{2}\right)^r u,\]
where the sum is extended to all words that may obtained by replacing  pairs of consecutive stochastic letters
\(AA\) by the corresponding \(\bar A\) and \(r\geq 1\) is the number of pairs replaced. For instance, \(\rho(aAAA) =
aAAA-(1/2)a\bar A A-(1/2)aA \bar A\) and \(\rho(AAAA) = AAAA -(1/2)\bar A AA-(1/2) A\bar A A-(1/2)AA\bar
A+(1/4) \bar A\bar A\) and \(\rho(AB) = AB\) if \(A\neq B\).

The maps \(\theta\) and \(\rho\) have been defined so that they encapsulate the relation between Ito and
 Stratonovich integrals, as shown in the next result, where the first formula expresses each Ito iterated integral
  as a linear combination of Stratonovich iterated integrals (cf. formula (8) in \cite{gaines}).

\begin{prop}\label{prop:IJ}
 For each \(w\in\overline W\),
\begin{equation}\label{eq:IJ}
I_w(t_0+h;t_0) = \Big(J^\star(t_0+h;t_0),\rho(w)\Big),
\end{equation}
and, therefore, for each \(p\in \R\langle \eA\rangle\),
\[
\Big(I(t_0+h;t_0),p\Big) = \Big(J^\star(t_0+h;t_0),\rho(p)\Big).
\]
As a consequence,
\(\theta\) maps the (Stratonovich) Chen series \(J^\star\)  into the (Ito) Chen series
\[
I = \sum_{w\in\overline\W} I_w(t_0+h;t_0) w.
\]
\end{prop}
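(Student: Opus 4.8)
The three assertions reduce to one. Since \(\rho\) is by construction the transpose of \(\theta\) with respect to \((\cdot,\cdot)\), for every \(w\in\overline{\W}\) we have \(\big(\theta(J^\star),w\big)=\big(J^\star,\rho(w)\big)\); hence \eqref{eq:IJ} is equivalent to \(\big(\theta(J^\star),w\big)=I_w\) for all \(w\), i.e.\ to the single identity \(\theta\big(J^\star(t_0+h;t_0)\big)=I(t_0+h;t_0)\) in \(\R\langle\langle \eA\rangle\rangle\). The displayed formula for a general \(p\in\R\langle \eA\rangle\) then follows by linearity in \(p\), and the closing assertion of the proposition is this identity verbatim. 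So the whole task is to prove \(\theta(J^\star)=I\).

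The plan is to recognise each side as the solution of a linear equation in the appropriate formal-series algebra and then to match the two equations. Reading the recursion \eqref{eq:recurrenceintegrals} off the right-most letter shows that \(t\mapsto J^\star(t;t_0)\) solves, in \(\R\langle\langle \A^\star\rangle\rangle\), the Stratonovich equation
\[
dS=S\,d\mathcal X(t),\qquad S(t_0;t_0)=\emptyset,\qquad d\mathcal X(t)=\sum_{a\in\Adet}a\,dt+\sum_{A\in\Asto}A^\star\,dt+\sum_{A\in\Asto}A\circ d\B_A(t),
\]
the product being concatenation (each word-coefficient of \(S\) is a finite combination of honest iterated integrals, so there is no convergence issue). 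Because \(\theta\) acts coefficientwise — so that every computation involved is finite — and is an algebra morphism, \(Z(t):=\theta\big(J^\star(t;t_0)\big)\) solves \(dZ=Z\,d\theta(\mathcal X)(t)\) with \(Z(t_0)=\emptyset\), where
\[
d\theta(\mathcal X)(t)=\sum_{a\in\Adet}a\,dt+\sum_{A\in\Asto}\Big(\bar A-\tfrac12 AA\Big)dt+\sum_{A\in\Asto}A\circ d\B_A(t).
\]

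The decisive step is to pass from Stratonovich to Ito in this last equation. The correction attached to \(\sum_{A\in\Asto}(ZA)\circ d\B_A\) is \(\tfrac12\sum_{A\in\Asto}d[ZA,\B_A]\); since the only Brownian-driven part of \(dZ\) is \(\sum_{A\in\Asto}(ZA)\circ d\B_A\) and each letter is a constant, for every fixed \(A\in\Asto\) one has \(d[Z,\B_A]=(ZA)\,dt\), hence \(d[ZA,\B_A]=\big(d[Z,\B_A]\big)A=ZAA\,dt\) — here it is essential that the driving letter stands to the \emph{right} of \(Z\). Adding this correction exactly cancels the \(-\tfrac12\sum_{A}AA\) that \(\theta(A^\star)=\bar A-\tfrac12 AA\) had produced, so that \(Z\) solves, in Ito form,
\[
dZ=Z\Big(\sum_{a\in\Adet}a+\sum_{A\in\Asto}\bar A\Big)dt+\sum_{A\in\Asto}(ZA)\,d\B_A(t),\qquad Z(t_0)=\emptyset.
\]
This is precisely the linear Ito equation obeyed by \(I(t;t_0)\), obtained by reading the Ito analogue of \eqref{eq:recurrenceintegrals} off the right-most letter (recall \(d\B_{\bar A}(s)=ds\)). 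As this system is triangular in the length of the word — the coefficient of \(v\ell\) is obtained by integrating the coefficient of \(v\) against \(d\B_\ell\) — it has a unique solution, computed length by length; therefore \(Z=I\), i.e.\ \(\theta(J^\star)=I\).

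I expect the only genuine difficulty to be the bookkeeping in this conversion: keeping the non-commutative correction right (the \(+\tfrac12\sum_{A}ZAA\,dt\) with the letters on the right, annihilating the \(-\tfrac12\sum_{A}AA\) built into \(\theta(A^\star)\)), and noting that differentiating a formal series is legitimate here because each coefficient is a finite combination of honest semimartingales. A more pedestrian variant — in the style of Propositions~\ref{prop:Jmultiplicative}--\ref{prop:Jtildemultiplicative} — is a direct induction on the number of letters of \(w\): apply \eqref{eq:recurrenceintegrals} and its Ito counterpart to the outermost letter and, when that letter is stochastic, rewrite the single Ito integral as \(\int Y\,d\B_A=\int Y\circ d\B_A-\tfrac12[Y,\B_A]_{t_0}^{t_0+h}\); the three outcomes (outer letter \(A\), \(\bar A\), or deterministic) match term-by-term the three ways a word of \(\W^\star\) can be mapped by \(\theta\) onto a word of \(\overline{\W}\) with a prescribed last letter, which is exactly the defining relation of \(\rho\).
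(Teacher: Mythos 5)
Your argument is correct, but it is organized differently from the paper's proof. You reduce all three assertions, via the transposition defining \(\rho\), to the single series identity \(\theta(J^\star)=I\), and you prove that identity globally: reading \eqref{eq:recurrenceintegrals} (and its Ito analogue) off the right-most letter, both Chen series are characterized as the unique solutions — uniqueness holding because the system is triangular in word length — of a right-concatenation linear formal SDE, and the entire Ito--Stratonovich discrepancy is absorbed in one covariation computation, \(d[ZA,\B_A]=ZAA\,dt\), whose contribution cancels the \(-\tfrac12 AA\) built into \(\theta(A^\star)\). The paper instead proves \eqref{eq:IJ} coefficientwise, by induction on the weight of \(w\): it strips the outermost letters \(k\ell\), treats the case \(k=\ell=A\in\Asto\) with the single-integral conversion \(\int I_{wA}\,d\B_A=\int I_{wA}\circ d\B_A-\tfrac12\int I_{w}\,ds\), notes that for all other combinations the Ito and Stratonovich integrals coincide, and matches the outcome against the pairing \(\big(J^\star,\rho(\,\cdot\,)\big)\); the statement about \(\theta\) and the Chen series is then deduced as a corollary — the opposite direction to yours, which is legitimate since the two formulations are equivalent by transposition. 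The \lq\lq pedestrian variant\rq\rq\ you sketch at the end is essentially the paper's proof. What your route buys is that \(\rho\) never needs to be manipulated explicitly, the case analysis on the last letters disappears, and the cancellation mechanism (mirroring the drift correction \(-\tfrac12 f_A^\prime f_A\) of \eqref{eq:stratonovich2}) is exhibited once at the series level; what it costs is the need to justify series-level stochastic calculus — differentiating the formal series coefficientwise, computing \([Z,\B_A]\) from the Stratonovich representation (harmless, since covariation ignores the finite-variation part and each coefficient is a finite combination of iterated integrals), and the uniqueness step — points you do address, whereas the paper's induction needs nothing beyond the scalar conversion formula.
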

\begin{proof}
The equality in \eqref{eq:IJ} clearly holds if \(w\) is empty or consists of a single stochastic letter.
Suppose that it holds for all words with weight  \(\leq N\), \(N\geq 1/2\), and consider a word of of weight \(N+1/2\), which we write in the form \(wk\ell\). Assume first that \(k=\ell=A\) for some \(A\in\Asto\).
By the recurrence relation between  iterated integrals, we have
\[
I_{wAA} = \int_{t_0}^{t_0+h} I_{wA}(s)\, d\B_A(s),
\]
and then, by the relation between Ito and Stratonovich stochastic integrals (see e.g.\cite[Section 3.2]{pav}, the induction hypothesis and \eqref{eq:recurrenceintegrals}, we may write
\begin{eqnarray*}
I_{wAA} &=& \int_{t_0}^{t_0+h} I_{wA}(s)\circ d\B_a(s)-\frac{1}{2} \int_{t_0}^{t_0+h} I_w(s)\, dt\\
&=& \int_{t_0}^{t_0+h} \Big(J^\star(s;t_0),\rho(wA)\Big)\circ d\B_A(s)-\frac{1}{2} \int_{t_0}^{t_0+h}
\Big(J^\star(s;t_0),\rho(w)\Big)\, ds\\
&=& \Big(J^\star,\rho(wA)A\Big)-\frac{1}{2}\Big(J^\star,\rho(w)\bar A\Big)\\
&=& \Big(J^\star,\rho(wAA)\Big).
\end{eqnarray*}
For other combinations of \(k\) and \(\ell\) one proceeds similarly, starting from
\[
I_{wk\ell} = \int_{t_0}^{t_0+h} I_{wk}(s)\, d\B_\ell(s) = \int_{t_0}^{t_0+h} I_{wk}(s)\circ d\B_\ell(s).
\]
\end{proof}

As a simple instance of \eqref{eq:IJ} we note that, from the relation \(\rho(AA)=AA-(1/2)A^\star\) found
above, we get \(I_{AA} = J_{AA}-(1/2)J_{A^\star}\), i.e.\ the well-known relation
\begin{eqnarray*}
&&\int_{t_0}^{t_0+h} \B_A(s)\,d\B_A(s)\\
 &&
\qquad = \int_{t_0}^{t_0+h} \B_A(s)\circ d\B_A(s)-\frac{1}{2}h= \frac{1}{2}\Big(\B_A(t_0+h)^2-\B_A(t_0)^2-h\Big).
\end{eqnarray*}

\subsection{The equivalence Ito--Stratonovich}

Proposition~\ref{prop:IJ} links the  Chen series \(J^\star\) and \(I\). We  investigate next the link between
the corresponding series of differential operators. Recall that, associated with each   \(a\in\Adet\) or each
\(A\in\Asto\), there is a first order (Lie) differential operator \eqref{eq:differentialoperator}. On the
other hand, letters \(\bar A\in\eA\) corresponding to \(A\in\Asto\) give rise to second order differential
operators \eqref{eq:differentialoperatorbis}. We now associate with each letter \(A^\star\), \(A\in\Asto\), the
first-order differential operator defined by
\[
D_{A^\star}\chi(x) = \chi^\prime(x)\Big(-\frac{1}{2}f_A^\prime(x)f_A(x)\Big).
\]
Thus \(D_{A^\star}\) is the Lie operator of the vector field \(-(1/2)f^\prime_A(x)f_A(x)\). With this
definition, a simple computation yields
\[
D_{A^\star} = D_{\bar A} -\frac{1}{2}D_{AA}
\]
i.e.\ \(D_{A^\star} = D_{\theta(A^\star)}\). Furthermore, for \(a\in\Adet\), \(\theta(a) = a\) and, for
\(A\in\Asto\), \(\theta(A)=A\) and therefore \(D_\ell = D_{\theta(\ell)}\) for each \(\ell\in\A^\star\). It
follows that \(D_{S} = D_{\theta(S)}\) for each series \(S\in \R\langle\langle \A^\star\rangle\rangle\). In
particular, from the last equality in Proposition~\ref{prop:IJ}, we conclude \(D_{J^\star} = D_{I}\), or, in
other words, the pullback operator \(D_I\) for the Ito equation \eqref{eq:ito} coincides with the pullback
operator \(D_{J^\star}\) of the Stratonovich equation
\begin{equation}\label{eq:stratonovich2}
dx =\sum_{a\in\Adet} f_a(x)\, dt-\frac{1}{2}\sum_{A\in\Asto}f^\prime_A(x)f_A(x)\,dt+\sum_{A\in\Asto} f_A(x)\circ d\B_A.
\end{equation}
In fact, as is well known, this Stratonovich equation and \eqref{eq:ito} have the same solutions.
 This is easily proved: \eqref{eq:intchiito} coincides with the result of writing formula \eqref{eq:intchi}
  for the  system \eqref{eq:stratonovich2}. Of course, if all the \(f_A\), \(A\in\Asto\) are constant (additive
  noise), \eqref{eq:stratonovich2} is the same as \eqref{eq:stratonovich}, i.e.\ \eqref{eq:ito} and
  \eqref{eq:stratonovich} share the same solutions, see e.g. \cite[Section 4.9]{kloeden}.

\section{Additional algebraic results}
In this section we briefly relate the preceding material to standard results on combinatorial (Hopf) algebras
and provide additional algebraic results. \emph{Hopf} algebras are important tools in the study of numerical
integrators and in other
 fields including e.g.\ renormalization theories; a very readable introduction that  requires little algebraic background
  is presented in \cite{brouder}. For instance many developments of Butcher's theory of Runge-Kutta methods
  may be phrased in the language of the Hopf algebra of trees and in fact Butcher anticipated many results
  on that algebra later
  rediscovered in different settings. Useful references are \cite{anderfocm,fm}.

The (associative, commutative) \emph{shuffle} algebra \(\Hs(\A)\) of the alphabet \(\A\) is defined as
follows. As a vector space \(\Hs(\A)\) coincides with \(\R\langle \A\rangle\). However the product in
\(\Hs(\A)\) is given by shuffling words rather than by concatenating them. The algebra \(\Hs(\A)\) is graded
by the weight \(\|\cdot\|\). In addition we may consider in \(\Hs(\A)\) a \emph{coproduct} by decomposing
(deconcatenating) each word \(w\in\W\) \(\ell_1\dots \ell_n\) as
 \[
 \emptyset \otimes \ell_1\dots \ell_n+\ell_1\otimes \ell_2\dots \ell_n+\ell_1\dots \ell_n\otimes \emptyset.
 \]
 This coproduct is compatible with the shuffle product because, as explained in the proof of
 Lemma~\ref{lem:shuffle}, the operations of shuffling and deconcatenation commute. Therefore \(\Hs(\A)\) is a
 Hopf algebra graded by the weight.

 The dual vector space of \(\Hs(\A)\) may be identified with the vector space of formal series
 \(\R\langle\langle \A\rangle\rangle\) via the bilinear form \eqref{eq:bilinear}. In other words, the linear
 form on \(\Hs(\A)\) that as \(w\) ranges in \(\W\) associates with \(w\)  the real number \(S_w\) is
 identified
 with \(\sum S_w w\). With this identification, the concatenation product  of series \(S\in\R\langle\langle
 A\rangle\rangle\), or equivalently the product \eqref{eq:convolution} for the coefficents, coincides with the
 convolution product in the dual of the Hopf algebra. Series \(S\) with \(S_\emptyset = 1\) that satisfy the
 shuffle relations are then the linear forms on \(\Hs(\A)\) that are multiplication morphisms (i.e.\ preserve
 multiplication). The set of those linear forms
 forms is well known to be a group for the convolution product;  this group is called the \emph{shuffle group}
 and denoted \(\Gs(\A)\).
 Therefore  Lemma~ \ref{lem:shuffle} is just the statement that the convolution product of two elements in
 \(\Gs(\A)\) lies in \(\Gs(\A)\).

 The \emph{quasishuffle Hopf algebra} \(\Hq(\bar A)\) is constructed similarly. One endows the vector space
 \(\R\langle \eA\rangle\) with the quasishuffle product and the deconcatenation coproduct. The series
 \(S\in\R\langle\langle \eA\rangle\rangle\) with \(S_\emptyset = 1\) that satisfy the quasishuffle relations
 may then be viewed as forming the \emph{quasishuffle group} \(\Gq\) of linear forms on \(\Hq(\bar A)\) that are
 multiplication morphisms.

Theorem 2.5 in \cite{hoffman} shows that the mapping \(\rho\) is an isomorphism of \(\Hq(\eA)\) onto
\(\Hs(\A^\star)\). In particular it maps the quasishuffle product into the shuffle product:
\[
\rho(u\bowtie v) = \rho(u)\sh \rho(v), \qquad \forall u,v\in\overline W.
\]
This observation and the material in Section~\ref{sec:is} make clear that the quasishuffle/Ito results in
Propositions~\ref{prop:quasishuffle}--\ref{prop:Itildemultiplicative} may be derived from the corresponding
shuffle/Stratonov\-ich results by transforming \(\sh\) into \(\bowtie\) with the help of the inverse
isomorphism \(\rho^{-1}\).
\section*{Acknowledgements}
 J.M.S. has been supported by project MTM2016-77660-P(AEI/FEDER,
UE) funded by MINECO (Spain).

\end{document}